\newcommand{\vect}[1]{\ensuremath{\mathbf{#1}}}
\newcommand{\grad}{\nabla}
\newcommand{\argmin}{\mathop{\mathrm{argmin}}}
\newcommand{\iprod}[2]{\langle #1, #2 \rangle}
\newcommand{\abs}[1]{\left|{#1}\right|}
\newcommand{\norm}[1]{\|{#1} \|}
\newcommand{\poly}{\mathrm{poly}}
\newcommand{\cP}{\mathcal{P}}
\newcommand{\R}{\mathbb{R}}
\newcommand{\E}{\mathbb{E}}
\renewcommand{\u}{\vect{u}}
\newcommand{\cX}{\mathcal{X}}
\newcommand{\cY}{\mathcal{Y}}
\newcommand{\order}[1]{O\left(#1\right)}
\newcommand{\nosemic}{\renewcommand{\@endalgocfline}{\relax}}%
\newcommand{\dosemic}{\renewcommand{\@endalgocfline}{\algocf@endline}}%
\let\oldnl\nl%
\newcommand{\nonl}{\renewcommand{\nl}{\let\nl\oldnl}}%
\definecolor{skyblue}{rgb}{0.529,0.808,0.922}
\definecolor{LightCyan}{rgb}{0.88,1,1}
\definecolor{LightLightCyan}{rgb}{1.0,1.0,1.0}
\definecolor{Gray}{gray}{0.85}
\definecolor{DarkGray}{gray}{0.95}
\newcommand{\defeq}{\mathrel{\mathop:}=}
\newcommand{\prox}{\textrm{prox}}
\newcommand{\sfo}[1]{\text{SFO}\left(#1\right)}
\newcommand{\fo}[1]{\text{FO}\left(#1\right)}
\newcommand{\po}[1]{\text{PO}\left(#1\right)}
\newcommand{\lmo}[1]{\text{LMO}\left(#1\right)}
\def\ones{\mathbf{1}}
\def\reals{\mathbb{R}}
\def\cX{\mathcal{X}}
\def\bX{\mathcal{X}'}
\def\vX{{\reals^d}}
\def\hg{\widehat{g}}
\def\tg{\widetilde{g}}
\def\tu{\widetilde{u}}
\def\hT{\widehat{T}}
\def\indctr{{\mathbb I}}
\def\hx{\widehat{x}}
\def\prox{{\rm prox}}
\newcommand{\Ip}[2]{\left\langle#1, #2\right\rangle}
\def\Ord{{\mathcal{O}}}
\def\ord{{{o}}}
\renewcommand{\order}[1]{{\mathcal O}\left(#1\right)}
\def\px{x}
\def\mx{y}
\def\dx{z}
\def\tdx{\widetilde{z}}
\DeclarePairedDelimiter{\ceil}{\lceil}{\rceil}
\newtheorem{theorem}{Theorem}
\newtheorem{lemma}{Lemma}
\newtheorem{proposition}{Proposition}
\newtheorem{definition}{Definition}
\newenvironment{proofsketch}{%
	\proof}{\endproof}
\newcommand{\projstep}{\texttt{\selectfont Approx-Proj}\xspace}
\newcommand{\proxslide}{\texttt{\selectfont Prox-Slide}\xspace}
\newcommand{\aproj}{MOPES\xspace}%
\newcommand{\almo}{MOLES\xspace}%
\newcommand{\psgd}{PGD\xspace}
\newcommand{\apgd}{APGD\xspace}
\newcommand{\iapgd}{IAPGD\xspace}
\newcommand{\fwpsgd}{FW-PGD\xspace}
\newcommand{\randfw}{RandFW\xspace}
\newcommand{\focc}{FO-CC\xspace}
\newcommand{\sfocc}{S\focc}
\newcommand{\pocc}{PO-CC\xspace}
\newcommand{\lmocc}{LMO-CC\xspace}
\newcommand{\nsco}{NSCO\xspace}
\newcommand{\fl}{f_\lambda}
\newcommand*\widefbox[1]{\fbox{\hspace{2em}#1\hspace{2em}}}
\newcommand{\papertitle}{Projection Efficient Subgradient Method and Optimal Nonsmooth Frank-Wolfe Method}
\title{\papertitle}
\author{%
	Kiran Koshy Thekumparampil \\
	University of Illinois at Urbana-Champaign\\
	\texttt{thekump2@illinois.edu} \\
	\And
	Prateek Jain\\
	Microsoft Research, India\\
	\texttt{prajain@microsoft.com} \\
	\And
	Praneeth Netrapalli\\
	Microsoft Research, India\\
	\texttt{praneeth@microsoft.com}\\
	\And
	Sewoong Oh\\
	University of Washington, Seattle\\
	\texttt{sewoong@cs.washington.edu}\\
}
\begin{document}

\maketitle

\begin{abstract}
We consider the classical setting of optimizing a nonsmooth Lipschitz continuous convex function over a convex constraint set, when having access to a (stochastic) first-order oracle (FO) for the function and a projection oracle (PO) for the constraint set. 
It is well known that 
to achieve $\varepsilon$-suboptimality in high-dimensions, $\Theta(\varepsilon^{-2})$ FO calls are necessary~\cite{nemirovsky1983problem}. This is achieved by the projected subgradient method (\psgd)~\cite{bertsekas1999nonlinear}.
However, \psgd~also entails $\Ord(\varepsilon^{-2})$ PO calls, which may be computationally costlier than FO calls (e.g.~nuclear norm constraints). 
Improving this PO calls complexity of \psgd~is largely unexplored, 
despite the fundamental nature of this problem and extensive literature. 
We present first such improvement. %
This only requires a mild assumption that the objective function, when extended to a slightly larger neighborhood of the constraint set, still remains Lipschitz and accessible via FO. 
In particular, we introduce \aproj~method, which carefully combines Moreau-Yosida smoothing and accelerated first-order schemes. 
This is guaranteed to find a \emph{feasible} $\varepsilon$-suboptimal solution using only $\Ord(\varepsilon^{-1})$ PO calls and optimal $\Ord(\varepsilon^{-2})$ FO calls. 
Further,
{instead of a PO if we only have a linear minimization oracle (LMO, \`a la Frank-Wolfe) to access the constraint set},
an extension of our method, \almo, finds a \emph{feasible} $\varepsilon$-suboptimal solution using $\Ord(\varepsilon^{-2})$ LMO calls and FO calls---both match known lower bounds~\cite{lan2013complexity},  
resolving a question left open since~\cite{white1993extension}. 
{Our experiments confirm that these methods achieve significant speedups over the state-of-the-art, 
for a problem with costly PO and LMO calls.
}

\end{abstract}

\section{Introduction}
\label{sec:intro}

In this paper, we consider the nonsmooth convex optimization (\nsco) problem with the First-order Oracle (FO) and the Projection Oracle (PO) defined as: 
\begin{equation}\label{eq:nsco}
\text{NSCO}:\min_x\ f(x),\ \text{s.t.}\ x \in \cX\ , \ \  \text{FO}(x)\in \partial f(x),\ \text{and } \text{PO}(x)= \mathcal{P}_\cX(x) = \argmin_{y\in \cX}\|y-x\|_2^2,
\end{equation}
where  $f: \R^d \rightarrow \R$ is a convex Lipschitz-continuous function, and $\cX \subseteq \R^d$ is a convex constraint. When queried at a point $x$, FO returns a subgradient of $f$ at $x$ and PO returns the projection of $x$ onto $\cX$. \nsco is a fundamental problem with a long history and several important applications including support vector machines (SVM) \cite{bishop2006pattern}, robust learning \cite{huber1996robust}, and utility maximization in finance \cite{Vinter-Zheng}.

Finding an $\varepsilon$-suboptimal solution for this problem requires $\Omega(\varepsilon^{-2})$ FO calls in the worst case, when the dimension $d$ is large~\cite{nemirovsky1983problem}. This lower 
bound is %
tightly 
matched by the projected subgradient method (PGD).
Unfortunately, PGD also uses 
one PO call after every FO call,
resulting in a PO calls complexity (PO-CC)---the number of times PO needs to be invoked---of ${\Theta}(\varepsilon^{-2})$. 
This can be a major bottleneck in solving 
several practical problems like collaborative filtering \cite{srebro2005maximum}, %
where the cost of a PO is often higher than the cost of an FO call. This begs the natural question, which surprisingly is largely unexplored in the general nonsmooth optimization setting: {\em Can we design an algorithm whose PO calls complexity is significantly
better than the optimal FO calls complexity $O(\varepsilon^{-2})$?}

\begin{table}[t]
	\begin{center}
		\label{tab:results}
		{\resizebox{\textwidth}{!}{\small
				\begin{tabular}{c c  c c c}
					\toprule
					& {\bf Randomized Smoothing} &{\bf State-of-the-art} & {\bf Our results} & {\bf Lower}\\
					& {\bf dimension dependent} & {\bf dimension-free} & { (Theorems~\ref{thm:envelope-subgrad-method-cor1} and\ref{thm:envelope-subgrad-method-cor2}) } & {\bf bound}\\	\midrule 
					\rowcolor{LightLightCyan}
					SFO  & $\Ord((G^2+\sigma^2)/\varepsilon^2)$~\cite{duchi2012randomized} & $\Ord((G^2+\sigma^2)/\varepsilon^2)$~\cite{nesterov1998introductory} &  \cellcolor{LightCyan} {\color{black} $\boldsymbol{\Ord((G^2+\sigma^2)/\varepsilon^2)}$} & $\Omega((G^2+\sigma^2)/\varepsilon^2)$~\cite{nemirovsky1983problem} \\ 
					\rowcolor{DarkGray}
					PO &  $\Ord(d^{1/4}G/\varepsilon)$~\cite{duchi2012randomized} & $\Ord(G^2/\varepsilon^2)^\star$~\cite{nesterov1998introductory} & \cellcolor{Gray} {\color{black} $\boldsymbol{\Ord(G/\varepsilon)}$} & Open problem \\ \midrule 
					\rowcolor{LightLightCyan}
					SFO  &  $\Ord(\sqrt{d}\,(G^2+\sigma^2)^2/\varepsilon^4)$~\cite{lan2013complexity} &  $\Ord((G^2+\sigma^2)/\varepsilon^2)^\dagger$ & \cellcolor{LightCyan} {\color{black} $\boldsymbol{\Ord((G^2+\sigma^2)/\varepsilon^2)}$} & $\Omega((G^2+\sigma^2)/\varepsilon^2)$~\cite{nemirovsky1983problem} \\ 
					\rowcolor{DarkGray}
					LMO &  $\Ord(\sqrt{d}\,G^2/\varepsilon^2)$~\cite{lan2013complexity} & $\Ord((G^2+\sigma^2)^2/\varepsilon^4)^\dagger$ &  \cellcolor{Gray} {\color{black} $\boldsymbol{\Ord(G^2/\varepsilon^2)}$}& $\Omega(G^2/\varepsilon^2)$~\cite{lan2013complexity}  \\ 
					\bottomrule 
				\end{tabular}
		}}
	\end{center}
	\caption{
		{Comparison of SFO~\eqref{eq:sfo}, PO~\eqref{eq:nsco} \& LMO~\eqref{eq:lmo} calls complexities of our methods and state-of-the-art algorithms, and corresponding lower-bounds for finding an approximate minimizer of a $d$-dimensional NSCO problem~\eqref{eq:nsco}.}
		We assume that $f$ is convex and $G$-Lipschitz continuous, and is accessed through a stochastic subgradient oracle with a variance of $\sigma^2$.
		$^\star$requires using a minibatch of appropriate size, 
		$^\dagger$approximates projections of \psgd~with FW method (\fwpsgd, see Appendix~\ref{sec:fw_proj_subgrad_method}).}
\end{table}

In this work, we answer the above question in the affirmative. Our first key contribution is MOreau Projection Efficient Subgradient method (\aproj), that obtains an $\varepsilon$-suboptimal solution using only $\Ord(\varepsilon^{-1})$ PO calls, while still ensuring that the FO calls complexity (FO-CC)---the number of times FO needs to be invoked---is optimal, i.e., $\Ord(\varepsilon^{-2})$. This requires a mild assumption that the function $f$ extends to a slightly larger neighborhood of the constraint set $\cX$. Concretely, we assume that $f$ is Lipschitz continuous in this neighborhood and FO can be queried at points in this neighborhood. To the best of our knowledge, our result is the first improvement over the $\Ord({\varepsilon^{-2}})$ PO calls of PGD for minimizing a general nonsmooth Lipschitz continuous convex function.

We achieve this by carefully combining Moreau-Yosida regularization with accelerated first-order methods \cite{moreau1962functions,tseng2008accelerated}. 
As accelerated  methods cannot be directly applied to a nonsmooth $f$, we can instead apply them to 
minimize its Moreau envelope, which is smooth (as long as $f$ is Lipschitz continuous). 
Although this idea has been explored, for example, in~\cite{devolder2014first,beck2012smoothing},  
\pocc
has remained $\Ord({\varepsilon^{-2}})$,  unless a much stronger and unrealistic oracle is assumed~\cite{beck2012smoothing} with a direct access to the  gradient of Moreau envelope. 
The key idea in breaking this barrier is 
to separate out the dependence on FO calls of $f$ from PO calls to $\cX$ by:  
($a$) using 
 Moreau-Yosida regularization to \emph{split} 
 the original problem into a composite problem, where one component consists of an unconstrained optimization of the function $f$ and the other consists of a simple constrained optimization over the set $\cX$; and 
($b$) applying the gradient sliding algorithm~\cite{lan2016gradient} on this joint problem to ensure the above mentioned bounds for both FO and PO calls. 
{We note that our results are limited to the Euclidean norm, since our results crucially depend on smoothness of the Moreau envelope and its regularizer, which is not known for Moreau envelopes based on general Bregman divergences~\cite{bauschke2018regularizing}.} 

In some high-dimensional problems, 
even a single call to the PO can be computationally prohibitive. 
A popular alternative, pioneered by 
\citet{frank1956algorithm},
is to replace PO by a more efficient 
Linear Minimization Oracle (LMO), 
which returns a minimizer of 
any linear functional $\Ip{g}{\cdot}$ over the set $\cX$.
\begin{align}\label{eq:lmo}
\lmo{g} \in \argmin_{s\in \cX} \Ip{g}{s}
\end{align}
Linear minimization is much faster than projection 
in several practical ML applications such as
a nuclear norm ball constrained problems \cite{cai2010singular},
video-narration alignment~\cite{alayrac2016unsupervised}, structured SVM~\cite{lacoste2013block}, and multiple sequence alignment and motif discovery~\cite{yen2016convex}. 
LMO based methods have
an important additional benefit of producing solutions that preserve desired structures such as sparsity and low rank.
For {\em smooth} $f$, 
there is a long history of conditional gradient (Frank-Wolfe) methods that use 
$\Ord(\varepsilon^{-1})$ LMO calls and $\Ord(\varepsilon^{-1})$ FO calls to achieve $\varepsilon$-suboptimality, which achieve optimal \lmocc~\cite{jaggi2013revisiting}. 
For {\em nonsmooth} functions, starting from the work of~\cite{white1993extension}, several approaches have been proposed,
some under more assumptions. 
The best known upper bound on LMO calls is $\Ord(\sqrt{d}\varepsilon^{-2})$ which is achieved at the expense of significantly larger $\Ord(\varepsilon^{-4})$ FO calls. %
Details of these are in Section~\ref{sec:related}.

Our second key contribution is the algorithm~\almo,~which obtains an $\varepsilon$-suboptimal solution using the optimal $\Ord({\varepsilon^{-2}})$ LMO and FO calls, without any additional dimension dependence. 
We achieve this result by extending~\aproj~to work with approximate projections and using the classical Frank-Wolfe (FW) method~\cite{frank1956algorithm} to implement these approximate projections using LMO calls.

Finally, both of our methods extend naturally to the Stochastic First-order Oracle (SFO) setting, where we have access only to stochastic versions of the function's subgradients. Stochastic versions of \aproj~and \almo~still achieve the  
the same PO/LMO calls complexities as deterministic counterparts, while the SFO calls complexity (\sfocc) is $\order{(1+\sigma^2)\epsilon^{-2}}$, where $\sigma^2$ is the variance in SFO. This again matches information theoretic lower bounds~\cite{nemirovsky1983problem}.

\textbf{Contributions}: We  summarize our contributions below and in Table~\ref{tab:results}. We assume that the function $f$ extends to a slightly larger neighborhood of the constraint set $\cX$ i.e., $f$ continues to be Lipschitz continuous and (S)FO can be queried in this neighborhood.
\begin{itemize}[leftmargin=7pt]
	\item We introduce~\aproj~and show that it is guaranteed to find an $\varepsilon$-suboptimal solution for any constrained nonsmooth convex optimization problem using $\Ord({\varepsilon^{-1}})$ PO calls and optimal $\Ord({\varepsilon^{-2}})$ {SFO} calls. To the best of our knowledge, for the general problem, this achieves  the first improvement over $\Ord({\varepsilon^{-2}})$ 
	\pocc and \sfocc 
	of stochastic projected subgradient method (PGD).
	\item For LMO setting, we extend our method to design \almo, that achieves the optimal 
	\sfocc and \lmocc
	of $\Ord({\varepsilon^{-2}})$, and improves over the  best known 
	\lmocc 
	by $\sqrt{d}$.
	\item We also empirically evaluate \aproj and \almo on the popular 
	nuclear norm constrained Matrix SVM 
	problem \cite{wolf2007modeling}, where they achieve significant speedups over their corresponding baselines. %
	\item Our main technical novelty is the use Moreau-Yosida regularization to separate out the constraint (PO/LMO) and function (SFO) accesses into two parts of a composite optimization problem. This enables a better control of how many times each of these oracles are accessed. This idea might be of independent interest, whenever a trade-off between \pocc/\lmocc
	and 
	\sfocc
	is desirable.
\end{itemize}

\subsection{Related Work}
\label{sec:related}
{\bf Nonsmooth convex optimization}: Nonsmooth convex optimization has been the focal point of several research works for past few decades. \cite{nemirovsky1983problem} provided information theoretic lower bound of FO calls $O(\varepsilon^{-2})$ to obtain $\varepsilon$-suboptimal solution, for the general problem. This bound is matched by the \psgd~method introduced independently by~\citep{goldstein1964convex} and~\citep{levitin1966constrained}, which also implies a
\pocc
of $O(\varepsilon^{-2})$. Recently, several faster \psgd~style methods \cite{lacoste2012simpler,shamir2013stochastic,yang2018rsg,kundu2018convex} have been proposed that exploit more structure in the given optimization function, e.g., when the function is a sum of a smooth and a nonsmooth function for which a {\em proximal} operator is available \cite{beck2009fast}. But, to the best of our knowledge, such works do not explicitly address \pocc and are mainly concerned about optimizing 
\focc.
Thus, for the worst case nonsmooth functions, these methods still suffer from $O(\varepsilon^{-2})$ \pocc. 

{\bf Smoothed surrogates}: Smoothing of the nonsmooth function is another common approach in solving them \cite{moreau1962functions,nesterov2005smooth}. In particular, randomized smoothing \cite{duchi2012randomized,beck2012smoothing} techniques have been successful in bringing down 
\focc %
w.r.t. $\varepsilon$ but such improvements come at the cost of dimension factors. For example, \cite[Corollary 2.4]{duchi2012randomized} provides  a randomized smoothing method that has 
$O(d^{1/4}/\varepsilon)$ \pocc and $O(\varepsilon^{-2})$ \focc.
Our \aproj~method guarantees significantly better \pocc~than \psgd~that is still {\em independent} of dimension. 

{\bf One or $\log({1}/{\epsilon})$ projection methods}: Starting with the work of \cite{mahdavi2012stochastic}, several recent works \cite{zhang2013logt,chen2013optimal,yang2017richer} have proposed methods that require only {\em one} or  $\log({1}/{\epsilon})$ projections, under a variety of conditions on the optimization function like smoothness and strong convexity. However, these methods require that the constraint set can be written as $c(x)\leq 0$ and they require access to $\grad c(x)$---the gradient of $c$--in {\em each} iteration. Hence, for the general nonsmooth functions, they will require at least $O(\varepsilon^{-2})$ accesses to gradients of the set's functional form. On the other hand, our method is required to access the set at only $O(\varepsilon^{-1})$ points. Furthermore, for several practical problems, the computational complexities of computing $\grad c(x)$ and projecting are similar. For example, when $c(x)=\|x\|_{\rm nuc} - r$ where $\|\cdot \|_{\rm nuc}$ denotes the nuclear norm (see Section~\ref{sec:applications}), then both gradient of $c(x)$ as well as PO requires computation of a full-SVD of $x$. 

{\bf Frank-Wolfe methods:} FW or {\it conditional gradient} method~\cite{frank1956algorithm,levitin1966constrained}
for smooth convex optimization, which uses LMO, has found renewed interest in machine learning \cite{zhang2003sequential,jaggi2013revisiting} due to the efficiency of computing LMO over PO \cite{gidel2018frank}, and its ability to ensure atomic structure and provide coreset guarantees~\cite{clarkson2010coresets}. Over the last decade, several variants of FW method and their analyses have been proposed \cite{lan2013complexity,freund2016new,bach2015duality,garber2015faster,lan2017conditional,nesterov2018complexity,braun2019lazifying}, and FW has been extended to stochastic nonconvex \cite{lacoste2016convergence,hazan2016variance,reddi2016stochasticfw,sahu2019towards,balasubramanian2018zeroth,hassani2019stochastic} and online \cite{hazan2012projection,garber2013linearly,lafond2015online,chen2018projection,xie2020efficient,hazan2020faster} settings. 
However these methods provide dimension-free 
\lmocc and \sfocc
only for smooth functions, and further it is known that FW fails to converge if subgradients are used instead of gradients~\cite{nesterov2018complexity}.

{\bf Nonsmooth Frank-Wolfe methods:} \cite{white1993extension} posed an interesting question in the domain of nonsmooth optimization with LMO: 
can
\lmocc
be reduced from the $\Ord(\varepsilon^{-4})$ bound (achieved by \psgd~with PO implemented via LMO: \fwpsgd, see Appendix \ref{sec:fw_proj_subgrad_method}) without increasing 
\focc
significantly. On the lower bound side,  \cite{lan2013complexity} showed that $\Ord(\varepsilon^{-2})$ LMO calls are necessary. On algorithmic side, several randomized smoothing approaches combined with Frank-Wolfe methods were proposed, and can reduce \lmocc to $\Ord(d^{1/2}\varepsilon^{-2})$. But, 
they come at the expense of increased $\Ord(d^{1/2}\varepsilon^{-4})$ FO calls \cite[improving Theorem 5]{lan2013complexity}\footnote{
Needs tightening of \cite[Theorem 5]{lan2013complexity}, by reducing the number of SFO calls per step by a factor of $d^{-1/2}$, i.e.~$T_k = \lceil k d^{-1/2} \rceil$
}. 
If we allow stronger oracles or additional structure in the problem, the complexity can be significantly improved. 
Assuming a stronger than LMO oracle introduced in \cite{white1993extension}, \cite{ravi2019deterministic} shows that 
$\Ord(1/\varepsilon^2)$ 
\lmocc and \focc
are achievable for a special class of problems with low curvatures.  
Another popular setting is when the nonsmooth problem admits a {\em smooth} convex-concave saddle point reformulation \cite{hammond1984solving,cox2017decomposition,pierucci2014smoothing,harchaoui2015conditional,he2015semi,he2015stochastic,gidel2017frank,locatello2019stochastic}. 
Among these the best complexity is achieved by semi-proximal mirror-prox \cite{he2015semi} which uses 
$\Ord(\varepsilon^{-2})$ LMO and $\Ord(\varepsilon^{-1})$ FO calls. 
However, for the general nonsmooth convex optimization problem with LMO, the problem posed by \cite{white1993extension} remained open, and is resolved by our \almo~method that achieves the optimal 
$\Ord(\varepsilon^{-2})$ \lmocc and \focc.

\section{Preliminaries and Notations}
\label{sec:prelims}
We consider Nonsmooth Convex Optimization with FO and PO \eqref{eq:nsco} or LMO \eqref{eq:lmo} accesses.
Let $\cX \subset \vX$ be a closed convex set of diameter $D_\cX \defeq \max_{x_1, x_1 \in \cX} \|x_1-x_2\|$, where $\|\cdot \|$ is the Euclidean norm which corresponds to the inner product $\Ip{\cdot}{\cdot}$. 
Let $\cX$ be enclosed in a closed convex set $\bX \subseteq \vX$ to which it is easy to project, i.e.~$\cX \subset \bX$. For simplicity, let $\bX$ be a Euclidean ball of radius $R \leq D_\cX$ around origin. We can satisfy $R=D_\cX$ by re-centering $\vX$ around any feasible point of $\cX$.
We assume  $f:\bX \to \reals$ to be a proper, lower semi-continuous (l.s.c.), convex Lipschitz function.%
We use $\partial f(x)$ to denote sub-differential of $f$ at $x$, and if $f$ is differentiable we use $\nabla f(x)$ to denote its gradient at $x$. We assume a first-order oracle (FO) can provide access to some subgradient at any point in $\bX$, i.e.~$\fo{x} \in \partial f(x)$.
\begin{definition}\label{def:lipschitzness}
	A function $f: \bX \to \reals$ is $G$-Lipschitz if and only if, $\abs{f(y) - f(x)} \leq G\,\|y - x\|$ for all $x, y \in \bX$. For a convex $f$, this is equivalent to: $\max_{x \in \bX} \max_{g \in \partial f(x)} \|g\| \leq G$.%
\end{definition}
\begin{definition}\label{def:smooth}
	A function $f: \bX \to \reals$ is $\mu$-strongly convex if and only if, $\frac{\mu}{2} \|y-x\|^2+ \Ip{g}{y - x}  + f(x)\leq f(y) $, for all $x,y \in \bX$ and $g\in \partial f(x)$. Similarly, a differentiable function $f: \bX \to \reals$ is said to be $L$-smooth if and only if,
	$f(y) \leq f(x) + \Ip{\nabla f(x)}{y - x} + \frac{L}2 \|y-x\|^2$ for all $x,y \in \bX$. 
\end{definition}
In addition to FO, we also consider problems with stochastic FO (SFO) access, which computes stochastic subgradient of a point $x$ with variance $\sigma^2$, as defined below: 
\begin{equation}
SFO(x):=\hg,\ \text{where}\ \E[\hg \,|\, x] = g \text{ for some } g \in \partial f(x),\ \text{and}\ \E[\|\hg - g\|^2 \,|\, x] \leq \sigma^2.\label{eq:sfo}
\end{equation}

\noindent {\bf Moreau Envelope:} The key idea behind our method is to use ``smoothed'' version of the function via its Moreau envelope~\cite{moreau1965proximite,yosida2012functional} defined below.

\begin{definition}\label{def:Moreau}
	For a proper l.s.c.~convex function $f:{\bX} \to \reals \cup \{\infty\}$ defined on a closed convex set $\bX$ and $\lambda > 0$, its {\em Moreau-(Yosida) envelope} function, $f_\lambda: \bX \to \reals$, is given by 
	\begin{eqnarray}
	f_{\lambda}(x) \;\; = \;\; \min_{x' \in \bX} f(x') + \frac1{2\lambda} \|x - x'\|^2,\;\; \text{ for all $x \in \bX$}\;. 
	\label{eq:envelope}
	\end{eqnarray}
	Furthermore, the~\prox~operator is defined: $\prox_{\lambda f}(x) \defeq \argmin_{x' \in \bX} f(x') + \frac1{2\lambda} \|x - x'\|^2$.
\end{definition}

When $f$ is clear from context, we will  use $\hat{x}_\lambda(x)$ to denote $\prox_{\lambda f}(x)$.
Note that this definition of Moreau envelope is not standard as $x'$ is constrained to $\bX \subseteq \vX$. However, the following lemma (whose proof is in Appendix \ref{lem:moreau-properties-pf}) shows that this Moreau envelope and the \prox~operator still satisfies most useful properties of the standard definition.

\begin{lemma}\label{lem:moreau-properties}
	For a closed convex set $\bX$, a convex proper l.s.c.~function $f:\bX \to \reals \cup \{\infty\}$ and $\lambda > 0$, the following hold for any $x \in \bX$.
	\\
	(a) $\hat{x}_{\lambda}(x)$ is unique and $f(\hat{x}_{\lambda}(x)) \leq f_{\lambda}(x) \leq f(x)$, \\
	(b) $f_{\lambda}$ is convex, differentiable, $1/\lambda$-smooth and  $\nabla f_{\lambda}(x) =  (1 / \lambda) (x - \hat{x}_{\lambda}(x)) $, and,\\
	(c) if $f$ is $G$-Lipschitz continuous, then, $\|\hat{x}_{\lambda}(x)-x\| \leq G\lambda$, and $f(x) \leq f_\lambda(x) + G^2\lambda/2$.
\end{lemma}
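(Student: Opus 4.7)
The plan is to establish (a), (b), (c) in sequence, exploiting the fact that, apart from the constraint $x' \in \bX$, this is the classical Moreau envelope; the cleanest way to handle the constraint is to absorb it into the objective by considering $\tilde f \defeq f + I_{\bX}$, where $I_{\bX}$ is the indicator of $\bX$ (proper, l.s.c., convex), so that $f_\lambda(x) = \min_{x' \in \vX} \tilde f(x') + \frac{1}{2\lambda}\|x-x'\|^2$ becomes a standard (unconstrained) Moreau envelope of a proper l.s.c.\ convex function.

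For (a), the inner objective is $(1/\lambda)$-strongly convex and l.s.c.\ on a closed convex set, hence admits a unique minimizer $\hat x_\lambda(x)$. The sandwich $f(\hat x_\lambda(x)) \le f_\lambda(x) \le f(x)$ is obtained by dropping the nonnegative quadratic in the definition of $f_\lambda(x)$ and, for the upper bound, evaluating the minimand at the feasible point $x' = x$.

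For (b), convexity of $f_\lambda$ follows by writing it as a partial infimum of $F(x,x') \defeq \tilde f(x') + \frac{1}{2\lambda}\|x-x'\|^2$, which is jointly convex. For the gradient formula, I will apply a Danskin-type argument justified by uniqueness of the minimizer and strong convexity of $F(\cdot, x')$ in the quadratic variable: $\nabla f_\lambda(x) = \nabla_x F(x, \hat x_\lambda(x)) = \frac{1}{\lambda}(x - \hat x_\lambda(x))$. For $(1/\lambda)$-smoothness, I will establish firm nonexpansiveness of the prox,
\begin{equation*}
\langle \hat x_\lambda(x) - \hat x_\lambda(y),\, x - y\rangle \;\ge\; \|\hat x_\lambda(x) - \hat x_\lambda(y)\|^2,
\end{equation*}
by subtracting the first-order optimality conditions $\frac{1}{\lambda}(x - \hat x_\lambda(x)) \in \partial \tilde f(\hat x_\lambda(x))$ and the analogue at $y$ and using monotonicity of $\partial \tilde f$. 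Expanding $\|\nabla f_\lambda(x) - \nabla f_\lambda(y)\|^2$ via the formula for $\nabla f_\lambda$ and applying this inequality gives $\|\nabla f_\lambda(x) - \nabla f_\lambda(y)\| \le \frac{1}{\lambda}\|x - y\|$.

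For (c), I use the first-order optimality condition in its disaggregated form: there exist $g \in \partial f(\hat x_\lambda(x))$ and $n \in N_{\bX}(\hat x_\lambda(x))$ with $g + n + \frac{1}{\lambda}(\hat x_\lambda(x) - x) = 0$. Taking the inner product with $x - \hat x_\lambda(x)$ and noting that $\langle n, x - \hat x_\lambda(x)\rangle \le 0$ since $x \in \bX$, together with $\|g\| \le G$ by Lipschitzness, yields $\frac{1}{\lambda}\|x - \hat x_\lambda(x)\|^2 \le G\,\|x - \hat x_\lambda(x)\|$, i.e.\ $\|x - \hat x_\lambda(x)\| \le G\lambda$. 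For the final inequality, I combine $f_\lambda(x) = f(\hat x_\lambda(x)) + \frac{1}{2\lambda}\|x - \hat x_\lambda(x)\|^2$ with $f(x) - f(\hat x_\lambda(x)) \le G\,\|x - \hat x_\lambda(x)\|$ and optimize over $r = \|x - \hat x_\lambda(x)\| \ge 0$ to obtain $f(x) - f_\lambda(x) \le \max_{r \ge 0}\bigl(Gr - r^2/(2\lambda)\bigr) = G^2\lambda/2$. The main obstacle is correctly handling the normal-cone term arising from $\bX$ in the optimality conditions; the $x \in \bX$ hypothesis makes its contribution have the right sign, which is what allows (c) to go through cleanly.
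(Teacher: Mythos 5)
Your proposal is correct, and part (a) coincides with the paper's argument, but for (b) and (c) you follow a genuinely different, more ``textbook'' route than the paper. For (b), the paper never invokes Danskin, firm nonexpansiveness, or monotonicity of $\partial f$: it derives everything from a single inequality obtained by applying $1/\lambda$-strong convexity of $\phi_{\lambda,x}(x') \defeq f(x')+\frac{1}{2\lambda}\|x-x'\|^2$ at its minimizer, which gives $f(x') \geq f(\hat{x}_\lambda(x)) + \Ip{g_x}{x'-\hat{x}_\lambda(x)}$ with $g_x \defeq (x-\hat{x}_\lambda(x))/\lambda$; instantiating this at $\hat{x}_\lambda(y)$ and symmetrizing yields the two-sided bound $\frac{\lambda}{2}\|g_x-g_y\|^2 \leq f_\lambda(y)-f_\lambda(x)-\Ip{g_x}{y-x} \leq \frac{1}{2\lambda}\|y-x\|^2$, from which convexity, differentiability, the gradient formula and $1/\lambda$-smoothness all drop out simultaneously (and co-coercivity comes for free). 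Your decomposition---partial minimization of the jointly convex $F$ for convexity, a Danskin-type argument for $\nabla f_\lambda$, and firm nonexpansiveness of the prox via monotonicity of $\partial(f+I_{\bX})$ for smoothness---is modular and standard, and is perfectly valid here; interestingly, the paper's source contains a commented-out firm-nonexpansiveness step, so the authors considered exactly your route before opting for the direct inequality manipulation. For (c), the difference is more consequential: the paper works purely with function values (strong convexity gives $\frac{1}{2\lambda}\|x-\hat{x}_\lambda(x)\|^2 \leq f(x)-f_\lambda(x)$, then $f(x)-f(\hat{x}_\lambda(x)) \leq G\|x-\hat{x}_\lambda(x)\|$ and the quadratic bound $Gr - r^2/(2\lambda) \leq G^2\lambda/2$), so it never needs optimality conditions. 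Your argument via $0 = g + n + \frac{1}{\lambda}(\hat{x}_\lambda(x)-x)$ with $g \in \partial f(\hat{x}_\lambda(x))$, $\|g\|\leq G$, and $n \in N_{\bX}(\hat{x}_\lambda(x))$ is fine in substance, but the existence of such a decomposition with $\|g\| \leq G$ is not automatic when $\hat{x}_\lambda(x)$ lies on the boundary of $\bX$ and $\dom f = \bX$ (the extended subdifferential there contains unbounded elements); to make it airtight you should either invoke a qualification/sum-rule argument through the $G$-Lipschitz extension of $f$ to all of $\vX$ (whose subgradients have norm at most $G$ and remain subgradients of $f$ on $\bX$), or simply note that the paper's function-value argument sidesteps this issue entirely. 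Your derivation of $\|x-\hat{x}_\lambda(x)\| \leq G\lambda$ and the final optimization over $r$ are otherwise exactly right.
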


This lemma implies that, to find an $\varepsilon$-approximate minima of a nonsmooth  $f$, 
one can instead minimize $f_\lambda$ and achieve a faster convergence by exploiting its smoothness. Concretely, if $f$ is $G$-Lipschitz and $\lambda=O(\varepsilon/G^2)$, and Lemma~\ref{lem:moreau-properties}(c) ensures that solving $f_\lambda$ up to $O(\varepsilon)$ accuracy guarantees 
$O(\varepsilon)$ accuracy in the original minimization of $f$ (Lemma~\ref{lem:joint-gap-to-func-gap}). This insight allows us to design a simple method that can reduce \pocc but at the cost of a higher 
\focc.
Next section starts with this result as a warm-up and then presents our method, which ensures reduced \pocc with optimal
\focc.

\section{Main Results}
We present our main results in this section. We first present the main ideas in Section~\ref{sec:ideas} and then the results for PO and LMO settings in Sections~\ref{sec:mopes} and~\ref{sec:moles} respectively.
\subsection{Main Ideas}\label{sec:ideas}

We are interested in the \nsco problem \eqref{eq:nsco}. As discussed in the previous section, instead of optimizing $f(x)$ over $\cX$, we can instead optimize the Moreau envelope function $\fl(x)$ with $\lambda=O(\epsilon)$ to get $\epsilon$-suboptimality. Since by Lemma~\ref{lem:moreau-properties}, $f_{\lambda}(\cdot)$ is a $1/\lambda$-smooth convex function, a straightforward approach is to iteratively optimize $\fl(x)$ using Nesterov's accelerated gradient descent (AGD)~\cite{nesterov1983method} method. 
But to get gradients of $f_{\lambda}(x)$, we will need to solve the {\em inner problem} \eqref{eq:envelope} approximately. 

A key insight is that since the inner problem does not involve the constraint set $\cX$,  PO calls are not required in inner steps for estimating $\nabla f_{\lambda}(x)$. So the total number of PO calls required is equal to the total number of outer steps in minimizing $f_{\lambda}(x)$, which for Nesterov's AGD is $\Ord({1/\sqrt{\lambda \varepsilon}}) = \Ord({\varepsilon^{-1}})$. We see that this already improves over the $\Ord({\varepsilon^{-2}})$ projections of PGD. However, since $\nabla f_{\lambda}(x)$ needs to be estimated to a good accuracy, the total number of FO calls, including in the inner loop, turns out to be $\Ord({\varepsilon^{-3}})$, which is worse than the optimal $\Ord({\varepsilon^{-2}})$ FO calls of PGD.

Similarly, when we have access to LMO for $\cX$, we could optimize $\fl$ using FW~\cite{frank1956algorithm,jaggi2013revisiting}, with total number of outer steps  $=\Ord({{1}/{\lambda\varepsilon}}) = \Ord({\varepsilon^{-2}})$, and hence the total number of LMO calls is $\Ord({\varepsilon^{-2}})$. However, this again leads to suboptimal $\Ord({\varepsilon^{-4}})$ FO calls. We can improve the 
\focc
to $\Ord({\varepsilon^{-3}})$ by using the conditional gradient sliding algorithm~\cite{lan2016conditional} instead of FW method, but this is still worse than the optimal $\Ord({\varepsilon^{-2}})$ FO calls.

In order to achieve optimal number of FO calls, we directly optimize the Moreau envelope through the following joint optimization.
\begin{align} \label{eq:joint_moreau_opt}
\min_{x \in \cX, x' \in \bX} \; [\Psi_\lambda(x, x') \defeq  f(x') + \psi_\lambda(x,x') ] \;\; \mbox{ where } \;\; \psi_\lambda(x,x') \defeq \frac1{2\lambda} \|x'-x\|^2,
\end{align}
where the function $\Psi_\lambda: \bX \times \bX \to \reals$ is convex in the joint variable $(x, x')$. The main advantage of this new form is that, this is a composite optimization problem with a nonsmooth part (corresponding to $f(x')$) and a $2/\lambda$-smooth part (corresponding to $({1}/{2\lambda}) \norm{x'-x}^2$) with the constrained variable $x \in \cX$ only appearing in the smooth part.
Now, by the following lemma, an approximate minimizer of $\Psi_\lambda$, is also an approximate minimizer of the Moreau envelope $f_\lambda$, and further if $\lambda = \varepsilon/G^2$,
it is also an approximate minimizer of the original function $f$.
A proof is provided in Appendix~\ref{sec:joint-gap-to-func-gap-pf}.
\begin{lemma}\label{lem:joint-gap-to-func-gap}
	Under the same assumptions as in Lemma~\ref{lem:moreau-properties}, let $\cX \subseteq \bX$ be a convex subset and $\Psi_\lambda$ be defined as in \eqref{eq:joint_moreau_opt}. Then, $(i)$ $\min_{x \in\cX} \min_{x \in \bX} \Psi_\lambda(x,x') = \min_{x \in \cX} f_\lambda(x) \leq \min_{x \in \cX} f(x)$ , and 	$(ii)$ for any random vectors $(x_\varepsilon, x_\varepsilon') \in \cX \times \bX$, $\E[f(x_\varepsilon)] - G^2 \lambda/2\leq \E[f_\lambda(x_\varepsilon)] \leq \E[\Psi_{\lambda}(x_\varepsilon, x_\varepsilon')]$.
\end{lemma}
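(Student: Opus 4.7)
The plan is to derive both parts of the lemma by unpacking the definition of $\Psi_\lambda$ and chaining together the three properties already established in Lemma~\ref{lem:moreau-properties}; there are no new technical ingredients required.

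For part $(i)$, I would first observe that for each fixed $x \in \cX \subseteq \bX$, the inner minimization in \eqref{eq:joint_moreau_opt} is precisely the Moreau envelope evaluated at $x$:
$$\min_{x' \in \bX} \Psi_\lambda(x, x') \;=\; \min_{x' \in \bX} \left[\, f(x') + \tfrac{1}{2\lambda} \|x' - x\|^2 \,\right] \;=\; f_\lambda(x),$$
which is exactly Definition~\ref{def:Moreau}. Taking the outer minimum over $x \in \cX$ gives the claimed equality $\min_{x\in\cX}\min_{x'\in\bX} \Psi_\lambda(x,x') = \min_{x\in\cX} f_\lambda(x)$. The last inequality $\min_{x \in \cX} f_\lambda(x) \leq \min_{x \in \cX} f(x)$ is then immediate from the pointwise bound $f_\lambda(x) \leq f(x)$ in Lemma~\ref{lem:moreau-properties}(a).

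For part $(ii)$, I would combine two pointwise inequalities and then take expectations. For the lower bound, Lemma~\ref{lem:moreau-properties}(c) gives $f(x_\varepsilon) \leq f_\lambda(x_\varepsilon) + G^2\lambda/2$ almost surely, which rearranges to $f(x_\varepsilon) - G^2\lambda/2 \leq f_\lambda(x_\varepsilon)$. For the upper bound, since $x_\varepsilon' \in \bX$ is a feasible point for the Moreau infimum at $x_\varepsilon$, the definition yields
$$f_\lambda(x_\varepsilon) \;=\; \min_{x'' \in \bX}\!\left[\, f(x'') + \tfrac{1}{2\lambda}\|x'' - x_\varepsilon\|^2 \,\right] \;\leq\; f(x_\varepsilon') + \tfrac{1}{2\lambda}\|x_\varepsilon' - x_\varepsilon\|^2 \;=\; \Psi_\lambda(x_\varepsilon, x_\varepsilon').$$
Applying monotonicity of expectation to the chain $f(x_\varepsilon) - G^2\lambda/2 \leq f_\lambda(x_\varepsilon) \leq \Psi_\lambda(x_\varepsilon, x_\varepsilon')$ then completes part $(ii)$.

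There is no real obstacle, only bookkeeping: I would check that $\cX \subseteq \bX$ is used so that restricting $x$ to $\cX$ does not alter the form of the inner infimum, and that the $G^2\lambda/2$ slack in Lemma~\ref{lem:moreau-properties}(c) relies on $f$ being $G$-Lipschitz on $\bX$, which is the standing assumption. Note that convexity of $\Psi_\lambda$ in the joint variable is not needed for this lemma; it will matter only downstream when applying accelerated first-order methods.
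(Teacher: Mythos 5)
Your proof is correct and follows essentially the same route as the paper: part $(i)$ by recognizing the inner minimization over $x' \in \bX$ as exactly $f_\lambda(x)$ and invoking Lemma~\ref{lem:moreau-properties}(a), and part $(ii)$ by combining Lemma~\ref{lem:moreau-properties}(c) with the feasibility of $x_\varepsilon'$ in the Moreau infimum. The only (harmless) difference is that for the upper bound in $(ii)$ you argue pointwise and then take expectations, whereas the paper routes through the conditional expectation $\E[x_\varepsilon' \mid x_\varepsilon]$ and Jensen's inequality; your version is slightly more direct and, as you note, does not even require convexity of $\Psi_\lambda$ in $x'$.
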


Our algorithm essentially solves~\eqref{eq:joint_moreau_opt} using Gradient Sliding~\cite{lan2016gradient} and Conditional Gradient Sliding~\cite{lan2016conditional} frameworks, which are optimal for minimizing composite problems of the form~\eqref{eq:joint_moreau_opt} for the PO and LMO settings respectively. The resulting algorithm for PO setting, called~\aproj~is given in Algorithm~\ref{algo:envelope_subgrad_method_proj}. The algorithm for LMO setting, called~\almo~is presented in Algorithm~\ref{algo:envelope_subgrad_method_lmo}. The only difference between~\aproj~and~\almo~is that~\almo~uses FW to compute approximate projections while~\aproj~uses exact projections. Finally, our algorithms extend straightforwardly to the case of stochastic subgradients through a stochastic first order oracle (SFO) and the resulting bounds depend on the variance of SFO in addition to the Lipschitz constant of $f(\cdot)$.
\begin{algorithm}[t]
	\caption{\aproj: MOreau Projection Efficient Subgradient method
}
	\label{algo:envelope_subgrad_method_proj}
	\DontPrintSemicolon %
	\SetKwFunction{PROJSTEP}{Approx-Proj}
	\SetKwFunction{PROXSLIDE}{Prox-Slide}	
	\KwIn{$f$, 
		$\cX$, $\bX$,
		$G$, $D_{\cX}$, $x_0$, $K$, 
		$\tilde{D}$, $c'$, $\lambda$, 
	}
	
	\SetKwProg{Fn}{}{:}{}
	{
		Set ${\px}_0 ' = {\dx}_0 ' = {\px}_0 = {\dx}_0 = x_0$\;
		\For{$k = 1, \ldots, K$} {
			Set $\beta_{k} = \frac{4}{\lambda k}\;, \;\gamma_{k} = \frac{2}{k+1}\,, \text { and } 
			T_{k} = \Big \lceil{ \frac{(4G^2 + \sigma^2) \lambda^2 K k^2}{2\tilde{D}}}\Big\rceil$
			\nllabel{algo_line:envelope-subgrad-param-set_proj}\;
			
			Set $({\mx}_{k}, {\mx}_{k}') =\left(1-\gamma_{k}\right) \cdot ({\px}_{k-1},{\px}_{k-1}')+\gamma_{k} \cdot ({\dx}_{k-1}, {\dx}_{k-1}') $
			\nllabel{algo_line:midpoint_compute_proj}\;
			
			Set ${\dx}_{k} =\cP_\cX\big({\dx}_{k-1} - \frac1{\beta_k} \cdot \nabla_{\mx_k} \Psi_{\lambda}(\mx_k,\mx_k')\big)$ \eqref{eq:nsco} \tcp*{\color{violet} Note $\nabla_{\mx_k} \Psi_{\lambda}(\mx_k,\mx_k') = \frac{\mx_k - \mx_k'}{\lambda}$} 			\nllabel{algo_line:dual_compute_proj}

			Set $\left({\dx}_{k}', {\tdx}_{k}'\right)=\text{\PROXSLIDE}\big(\nabla_{\mx_k'} \psi_{\lambda}(\mx_k,\mx_k'), {\dx}_{k-1}', \beta_{k}, T_{k}\big)$ 
			\tcp*{\color{violet}  $\nabla_{\mx_k'} \psi_{\lambda}(\mx_k,\mx_k')=\frac{{\mx}_{k}' - {\mx}_{k}}\lambda$}
			\nllabel{algo_line:dual_compute_prime_proj}

			Set $({\px}_{k}, {\px}_{k}') =\left(1-\gamma_{k}\right) \cdot ({\px}_{k-1},{\px}_{k-1}') +\gamma_{k} \cdot ({\dx}_{k}, {\tdx}_{k}')$
			\nllabel{algo_line:primal_compute_proj}
		}
		\KwOut{$({\px}_{K}, {\px}'_{K})$}
	}
	
\vspace*{5pt}
	\Fn{\PROXSLIDE{$g$, 
			$u_0$,
			$\beta$, $T$}{\color{violet}\ // Approx. resolve {\small ${\prox}_{f/\beta}\big(u'_0 -g/\beta \big)$}\cite{lan2016gradient}}\nllabel{algo_line:proxslide_proj}
		}{
		Set $\widetilde{u}_{0} = u_{0}$\;
		\For{$t = 1, \ldots, {T}$} {
			Set $\theta_{t} = \frac{2(t+1)}{t(t+3)}$, 
			 $\ \ \hg_{t-1} = \sfo{u_{t-1}}$ \eqref{eq:sfo}
			 \nllabel{algo_line:sfo_proj} \nllabel{algo_line:prox-slide-param-set_proj}\;
			Set 
			$\widehat{u}_{t} = u_{t-1} - \frac{1}{(1+t/2)\beta} \cdot (\hg_{t-1} + \beta (u_{t-1} -(u_{0} - g/\beta)))$
			\linebreak \tcp*{\color{violet}
				subgradient method step for $\phi(u) \defeq f(u) + \frac\beta2 \|u-\big(u_{0} - \frac{g}{\beta} \big)\|^2$}
			\nllabel{algo_line:sgd_step_proj}
			Set $u_t = \widehat{u}_t \cdot \min\left(1,R/\norm{u_t}\right)$\tcp*{\color{violet} projection of $\widehat{u}_t$ onto $\bX$: $\mathcal{P}_\bX(\u_t)$} \nllabel{algo_line:prox-slide-proj_proj}
			Set $\widetilde{u}_{t}=\big(1-\theta_{t}\big) \cdot \widetilde{u}_{t-1}+\theta_{t} \cdot u_{t}$
			\nllabel{algo_line:prox-slide-avg_proj}\;
		}
		\KwRet{$(u_{T}, \widetilde{u}_{T})$}\;
	}
\end{algorithm}

\subsection{MOreau Projection Efficient Subgradient (\aproj) method} 
\label{sec:mopes}

A pseudocode of our algorithm~\aproj~is presented in Algorithm~\ref{algo:envelope_subgrad_method_proj}. At a high level,~\aproj~is an inexact Accelerated Proximal Gradient method (\apgd)~\cite{nesterov2013gradient,beck2009fast} scheme which tries to implement Nesterov's AGD algorithm on $\Psi_{\lambda}(x,x')$. Now, standard AGD updates for solving $\min_{x\in \cX, x'}\Psi_{\lambda}(x,x')$, {\it if $\Psi_{\lambda}$ were smooth} are:   %
\begin{empheq}[box=\widefbox]{align}
\hspace*{-20pt}
{
	\begin{aligned}
	\beta_{k} &\gets  {4}/{\lambda k}\;, \;\gamma_{k} \gets {2}/{(k+1)} \\ 
	({\mx}_{k}, {\mx}_{k}') &\gets \left(1-\gamma_{k}\right) ({\px}_{k-1},{\px}_{k-1}')+\gamma_{k} ({\dx}_{k-1}, {\dx}_{k-1}') \\
	{\dx}_{k} &\gets  \cP_\cX \left({\dx}_{k-1} - \nabla_{{\mx}_{k}} \Psi_{\lambda}({{\mx}_{k}},  {\mx}_{k}')/\beta_k \right),\ \  {\dx}_{k}' \gets {\dx}_{k-1}' - \nabla_{{\mx}_{k}'} \Psi_{\lambda}({{\mx}_{k}}, {\mx}_{k}')/\beta_k, \\
	({\px}_{k}, {\px}_{k}') &\gets \left(1-\gamma_{k}\right) ({\px}_{k-1},{\px}_{k-1}') +\gamma_{k} ({\dx}_{k}, {\dx}_{k}').
	\end{aligned}
}
\label{eq:apgd_update_main}%
\end{empheq}
\aproj essentially implements the above updates, but as $\Psi_{\lambda}$ is  nonsmooth in $x'$, we use ~\prox~steps for the $x'$ variable instead of the GD steps. The \prox~ step---$\prox_{f/\beta_k}\big({\dx}_{k-1}' - \nabla_{{\mx}_{k}'} \psi_{\lambda}({{\mx}_{k}}, {\mx}_{k}')/\beta_k\big)$---is  implemented via \PROXSLIDE procedure (see ~\Cref{algo_line:dual_compute_prime_proj}), which is the standard subgradient method applied to a strongly convex function $\phi$ (see~\Cref{algo_line:sgd_step_proj}). Now, \proxslide procedure outputs two points $({\dx}_k',{\tdx}_k')$ which are the final and average iterates, respectively, of the subgradient method, This achieves optimal 
\focc
by exploiting strong convexity of $\phi$. If we were to use only the average of the iterates, the 
\focc
would increase by a factor of $\Ord(\varepsilon^{-1})$ (see the failed attempt in Appendix~\ref{sec:concept}).

Note that \aproj needs only a PO call \& no FO call in \Cref{algo_line:dual_compute_proj}, and only a FO/SFO call in \Cref{algo_line:sfo_proj}. Therefore, we bound below, the total number of PO calls $K$ and the number of FO/SFO calls $K\cdot T$.

\begin{theorem}
	\label{thm:envelope-subgrad-method-cor1}
	Let $f:\bX \to \reals$ be a $G$-Lipschitz continuous proper l.s.c.~convex function equipped with a SFO with variance $\sigma^2$, and $\cX\subseteq \bX = B(0, R)$ be some convex subset equipped with a projection oracle $\cP_{\cX}$ and contained inside the Euclidean ball of radius $R$ around origin.
	If we run~\aproj~(Algorithm~\ref{algo:envelope_subgrad_method_proj}) with inputs $\lambda = \varepsilon/{G^2}$, $\tilde{D} = c \|x_0 - x^*\|^2$ and $K = \lceil {2 \sqrt{(10+8c)} G \|x_0 - x^*\|}/{\varepsilon}\rceil$ for any absolute constant $c > 0$ and $x^* \in \argmin_{x \in \cX} f(x)$, then, using $\Ord({\frac{G \|x_0 - x^*\|}{\varepsilon}})$ PO calls and $\Ord(\frac{(G^2+\sigma^2)\|x_0 - x^*\|^2}{\varepsilon^2})$ FO calls, it outputs $x_K$ satisfying
	$f\left({\px}_{K}\right)- \min_{x \in \cX} f(x) \; \leq \;\; \varepsilon$.
\end{theorem}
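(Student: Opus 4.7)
The plan is to reduce the theorem to bounding the expected joint optimality gap on $\Psi_\lambda$ and then to analyze \aproj~as a stochastic inexact accelerated composite-gradient method on that objective. First, by Lemma~\ref{lem:joint-gap-to-func-gap}, if I can guarantee $\E[\Psi_\lambda({\px}_K, {\px}_K')] - \min_{(x,x') \in \cX \times \bX} \Psi_\lambda(x,x') \leq \varepsilon/2$, then for $\lambda = \varepsilon/G^2$ I obtain $\E[f({\px}_K)] - \min_{x \in \cX} f(x) \leq G^2\lambda/2 + \varepsilon/2 = \varepsilon$. The crucial structural point is that the constrained variable $x \in \cX$ enters only through the $2/\lambda$-smooth quadratic $\psi_\lambda$, while the nonsmooth part $f(x')$ depends only on the unconstrained variable $x' \in \bX$; this is the decoupling that will let me separate PO cost from FO cost.

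Next, I would analyze the outer loop as a stochastic accelerated composite-gradient method on $\Psi_\lambda$, i.e., an instance of the gradient-sliding scheme of Lan (2016). The $x$-update on Line~\ref{algo_line:dual_compute_proj} is a single Euclidean projection using the \emph{exact} partial gradient $({\mx}_k - {\mx}_k')/\lambda$, so each outer step costs exactly one PO call and no FO call. The $x'$-update requires an approximate evaluation of $\prox_{f/\beta_k}\bigl({\dx}_{k-1}' - \nabla_{{\mx}_k'}\psi_\lambda/\beta_k\bigr)$, which is the minimizer over $\bX$ of the $\beta_k$-strongly convex composite function
\begin{align*}
\phi(u) \;\defeq\; f(u) \;+\; \frac{\beta_k}{2}\,\Bigl\|u - \Bigl({\dx}_{k-1}' - \tfrac{1}{\beta_k}\nabla_{{\mx}_k'}\psi_\lambda({\mx}_k,{\mx}_k')\Bigr)\Bigr\|^2.
\end{align*}
Standard inexact-accelerated-method analysis for composite problems then yields, for absolute constants $C_1,C_2$,
\begin{align*}
\E[\Psi_\lambda({\px}_K, {\px}_K')] - \Psi_\lambda^{*} \;\leq\; \frac{C_1\,\|x_0-x^*\|^2}{\lambda K^2} \;+\; \frac{C_2}{K(K+1)}\sum_{k=1}^K k\,\delta_k,
\end{align*}
where $\delta_k$ measures the accuracy of the $k$-th inner prox solve. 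Choosing $K = \Theta(G\|x_0-x^*\|/\varepsilon)$ with $\lambda = \varepsilon/G^2$ makes the first term $O(\varepsilon)$.

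The main technical step is to bound the inner errors $\delta_k$ produced by \proxslide. \proxslide~runs $T_k$ steps of the stochastic subgradient method on the $\beta_k$-strongly convex $\phi$, whose stochastic (sub)gradients have second moment $O(G^2+\sigma^2)$ by the $G$-Lipschitz assumption and the SFO variance bound. For such problems the expected suboptimality of both the last iterate $u_{T_k}$ and the weighted average $\widetilde u_{T_k}$ is $O((G^2+\sigma^2)/(\beta_k T_k))$. The dual-center sequence ${\dx}_k'$ reuses the \emph{last} iterate (to control the distance in the next proximal term), while the primal sequence ${\px}_k'$ uses the \emph{averaged} iterate (to control the function gap in the outer potential); plugging each into its matching slot in the accelerated potential, together with $\beta_k = 4/(\lambda k)$, gives $\delta_k = O((G^2+\sigma^2)\lambda k / T_k)$. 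With the prescribed $T_k = \lceil (4G^2+\sigma^2)\lambda^2 K k^2/(2\tilde D)\rceil$ from Line~\ref{algo_line:envelope-subgrad-param-set_proj}, one checks $k\,\delta_k/(K(K+1)) = O(\tilde D/(\lambda K^2))$, which matches the outer optimality scale and closes the $\varepsilon/2$ bound.

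Finally I would tally oracle costs: one PO call per outer iteration gives $K = O(G\|x_0-x^*\|/\varepsilon)$ PO calls, and one SFO call per inner iteration (projection onto the ball $\bX$ is free in closed form) gives
\begin{align*}
\sum_{k=1}^K T_k \;=\; O\!\left(\frac{(G^2+\sigma^2)\lambda^2}{\tilde D}\cdot K\cdot K^3\right) \;=\; O\!\left(\frac{(G^2+\sigma^2)\,\|x_0-x^*\|^2}{\varepsilon^2}\right),
\end{align*}
matching the stated bound. The hard part will be the accelerated bookkeeping: I need to show that using the \emph{last} \proxslide~iterate for the proximal-center update and the \emph{averaged} iterate for the primal update is exactly what lets the $O(1/(\beta_k T_k))$ last-iterate rate for strongly convex subgradient method suffice. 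If instead only the averaged iterate were threaded into both sequences, the propagated inner error would pick up an extra factor (logarithmic or worse) and the total SFO count would exceed the optimal $O((G^2+\sigma^2)/\varepsilon^2)$; tracking this carefully is the key quantitative subtlety of the proof.
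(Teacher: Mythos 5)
Your proposal follows essentially the same route as the paper's proof: reduce to the joint objective $\Psi_\lambda$ via Lemma~\ref{lem:joint-gap-to-func-gap}, analyze the outer loop as an inexact accelerated (gradient-sliding) scheme with a potential-function argument, bound the \proxslide{} inner errors by the $O((G^2+\sigma^2)/(\beta_k T_k))$ rate for the $\beta_k$-strongly convex subproblem, and tally $K$ PO calls and $\sum_k T_k$ SFO calls with the same parameter choices. You also correctly isolate the paper's key subtlety --- threading the last \proxslide{} iterate into the distance term of the potential and the averaged iterate into the function-value term (Proposition~\ref{prop:general-thm-prop-2}) --- so the proposal matches the paper's argument in both structure and the decisive technical step.
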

{\bf Remarks}: Note that 
\focc
is same as that of \psgd (up to constants) while \pocc is significantly better. A natural open question is if \pocc can be further reduced.
Also, \aproj requires querying of SFO/FO at $u_{t-1}$ which is not necessarily in $\cX$ but is always in $\bX$ (\Cref{algo_line:sfo_proj}).
Recall from Section~\ref{sec:prelims} that $\bX$ is a Euclidean ball of radius $R \leq D_\cX$ around origin.
Being able to query SFO/FO in $\bX$ seems like a mildly stricter requirement than the standard requirement of querying on $\cX$ only, but for most practical problems this seems feasible. Even if $f$ is unknown outside of $\cX$, theoretically we could work with its convex extension to the entire space, which remains $G$ Lipschitz (see Section~\ref{sec:conclusion}).
Also, notice that the guarantee only depends on the diameter $D_\cX$ of the constraint set $\cX$ and not the radius $R$ of the enclosing set $\bX$. This is so because the first-order method only depends on the distance from initial point $(x_0, x_0')$ to the desired solution $(x^*, x^*)$, which is $\Ord(\|x_0 - x^*\|) = \Ord(D_\cX)$, as $x_0' = x_0$.
Finally, for simplicity of exposition, we provide desired suboptimality $\epsilon$ as an input to~\aproj%
--in practice, we can remove this assumption by using standard doubling trick \cite[Algorithm 6]{thekumparampil2019efficient}. 

\iftrue
{%
	See Appendix~\ref{sec:envelope-subgrad-method-cor1-pf} for a detailed proof of Theorem\ref{thm:envelope-subgrad-method-cor1}. 
	Here we provide a short proof sketch for the theorem to showcase the main analysis techniques used by the full proof.
	At a high level, our proof uses a potential function~\cite{bansal2017potential} for analyzing \apgd, combines it with Proposition~\ref{prop:main} which provides a fast convergence guarantee on \proxslide iterates, and then applies standard \apgd proof techniques \cite{tseng2008accelerated} to obtain the final result. 
	\vspace{-.4em}
	\begin{proofsketch}
		In this sketch we only consider the deterministic FO. Consider the following potential (Lyapunov) function, which is a slight modification of the standard AGD potential~\cite{bansal2017potential}, for any $x \in \cX$.
		\begin{align}
		\Phi_k \defeq k(k+1) (\Psi_\lambda({\px}_{k},{\px}_{k}') - \Psi_\lambda(x,x)) + ({4}/{\lambda}) (\| {\dx}_{k} - x\|^2 + \tau_{k+1} \|  {\dx}_{k}' - x\|^2) %
		\end{align}
		where $\tau_k \defeq {(T_{k}+1)(T_{k}+2)}/{T_{k}(T_{k}+3)}$. We will prove that this potential satisfies the descent rule: $\Phi_{k} \leq \Phi_{k-1} + k \eta_k'$, for some error $\eta_k'$. Using the fact that $\Psi_\lambda$ is a sum of two convex functions: $2/\lambda$-smooth quadratic $\psi_\lambda$ and $G$-Lipschitz $f$, and standard analysis techniques for AGD we can get
		\begin{align}
		k&(k+1)\Psi_\lambda({\px}_{k},{\px}_{k}') 
		\leq\; k(k-1) \Psi_\lambda({\px}_{k-1},{\px}_{k-1}') + 2k \psi_\lambda(x, x) +
		\nonumber \\&
		2k [ \Ip{\nabla_{k,x}}{{\dx}_{k}} + ({\beta_k}/{2}) \|{\dx}_{k} - {\dx}_{k-1}\|^2] + 
		2k [ \phi_k({\tdx}_{k}') - \phi_k(x) + ({\beta_k}/{2})  \|x - {\dx}_{k-1}'\|^2] \label{eq:envelope-subgrad-method-pfsk-eq1}
		\end{align}
		where we use the short-hands $\nabla_{k} \defeq \nabla \psi_\lambda({\mx}_{k}, {\mx}_{k}')$ and $\phi_k(x') \defeq f\left(x'\right) + \Ip{\nabla_{k,x'} }{x'} + ({\beta_k}/{2})\|x'-{\dx}_{k-1}'\|^{2}$. Next, using definition of projection 
		${\dx}_k$,
		we bound the third term in the RHS of \eqref{eq:envelope-subgrad-method-pfsk-eq1} as
		\begin{align}
		2k[\Ip{\nabla_{k,x} }{x -{\dx}_{k}} + ({\beta_k}/{2})\left\|{\dx}_{k} - {\dx}_{k-1}\right\|^{2}] &\leq 2k ({\beta_k}/{2})[\left\|{\dx}_{k-1} - {x}\right\|^{2} - \left\|{\dx}_{k} - {x}\right\|^{2}]\,. \label{eq:envelope-subgrad-method-pfsk-eq2}
		\end{align}
		The fourth term in the RHS of \eqref{eq:envelope-subgrad-method-pfsk-eq1} corresponds to the $\varepsilon$-approximate resolution of the $\prox_{f/\beta_k}$ operator through the \proxslide procedure (\Cref{algo_line:dual_compute_proj}), whose output satisfies the following guarantee.
		\begin{proposition}[informal version of Proposition~\ref{prop:general-thm-prop-2}] \label{prop:main}
			Output of \proxslide satisfies
			\begin{align*}
			\phi_k({\tdx}_k') - \phi_k (x) + \frac{\beta_k}2 \|{\dx}_{k}'-x\|^2 \leq \frac{\beta_k}2 (\tau_{k} - 1) [\|{\dx}_{k-1}'-x\|^2 - \|{\dx}_{k}'-x\|^2] + \frac{16\,G^2}{\beta_k T_k }. 
			\end{align*}
		\end{proposition}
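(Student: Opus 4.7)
The plan is to analyze \proxslide as a projected subgradient method on the $\beta_k$-strongly convex function $\phi_k$, producing a single weighted telescoping inequality that simultaneously controls the averaged iterate ${\tdx}_k' = \widetilde{u}_{T_k}$ and the final iterate ${\dx}_k' = u_{T_k}$. First I would complete the square to rewrite $\phi_k(x') = f(x') + \langle \nabla_{k,x'}, x'\rangle + (\beta_k/2)\|x' - {\dx}_{k-1}'\|^2$ as $f(x') + (\beta_k/2)\|x' - z_k\|^2 + \mathrm{const}$, with $z_k \defeq {\dx}_{k-1}' - \nabla_{k,x'}/\beta_k$; this exposes $\beta_k$-strong convexity of $\phi_k$ on $\bX$ and identifies the update direction $g_t^{\mathrm{tot}} \defeq \hg_{t-1} + \beta_k(u_{t-1} - z_k)$ appearing in Line~\ref{algo_line:sgd_step_proj} as an (unbiased) subgradient of $\phi_k$ at $u_{t-1}$. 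Non-expansiveness of $\mathcal{P}_\bX$ combined with $\beta_k$-strong convexity then yields the standard one-step inequality, for any $x \in \bX$ and step size $\eta_t = 2/((t+2)\beta_k)$:
\[
2\eta_t\bigl(\phi_k(u_{t-1}) - \phi_k(x)\bigr) \;\leq\; (1 - \beta_k \eta_t)\,\|u_{t-1} - x\|^2 \;-\; \|u_t - x\|^2 \;+\; \eta_t^2 \,\|g_t^{\mathrm{tot}}\|^2.
\]

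Next I would multiply by the weight $w_t \defeq t+1$ and sum from $t=1$ to $T_k$. The key algebraic identity enabling a clean telescope is $(w_t/\eta_t)(1 - \beta_k \eta_t) = w_{t-1}/\eta_{t-1}$, which is exactly what the paired choices $w_t = t+1$, $\eta_t = 2/((t+2)\beta_k)$ produce, zeroing out all intermediate $\|u_{t-1}-x\|^2$ coefficients and leaving only the two endpoints. A direct computation yields coefficient $(w_1/\eta_1)(1 - \beta_k\eta_1) = \beta_k$ on $\|u_0 - x\|^2$ and $-w_{T_k}/\eta_{T_k} = -(T_k+1)(T_k+2)\beta_k/2$ on $\|u_{T_k} - x\|^2$; dividing by $2W_{T_k} = T_k(T_k+3)$ and using $\tau_k - 1 = 2/(T_k(T_k+3))$ converts these to $(\beta_k/2)(\tau_k-1)$ and $-(\beta_k/2)\tau_k$ respectively. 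Splitting $(\beta_k/2)\tau_k = (\beta_k/2) + (\beta_k/2)(\tau_k-1)$ and transposing $(\beta_k/2)\|{\dx}_k'-x\|^2$ to the LHS produces the stated right-hand side $(\beta_k/2)(\tau_k-1)[\|{\dx}_{k-1}'-x\|^2 - \|{\dx}_k'-x\|^2]$. For the LHS, the averaging coefficients $\theta_t = 2(t+1)/(t(t+3))$ induce precisely the weights $w_t/W_{T_k}$ on $u_1, \dots, u_{T_k}$ in the recursion of Line~\ref{algo_line:prox-slide-avg_proj}; together with convexity of $\phi_k$ this converts the weighted sum into $\phi_k({\tdx}_k') - \phi_k(x)$, after a careful bookkeeping step that absorbs the one-step shift between the descent index ($u_{t-1}$) and the averaging index ($u_t$) using the initialization $\widetilde{u}_0 = u_0$, $\theta_1 = 1$.

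Finally, the residual error $(1/W_{T_k})\sum_{t=1}^{T_k} w_t \eta_t \|g_t^{\mathrm{tot}}\|^2$ is controlled by first showing $\|g_t^{\mathrm{tot}}\|^2 \leq 4G^2$ uniformly (a short induction giving $\|u_{t-1} - z_k\| \leq G/\beta_k$, combined with $\|\hg_{t-1}\| \leq G$ from $G$-Lipschitzness of $f$), and then noting $w_t \eta_t = 2(t+1)/((t+2)\beta_k) \leq 2/\beta_k$ while $W_{T_k} = T_k(T_k+3)/2 \geq T_k^2/2$; multiplying through yields the stated bound $16\,G^2/(\beta_k T_k)$. The main obstacle is the delicate telescoping algebra of the previous paragraph: the step size, weight, and averaging schedules in \proxslide are precisely engineered so that the strong-convexity shrinkage factor $(1 - \beta_k\eta_t)$ exactly cancels the weight ratio $w_{t-1}/w_t$, and so that the surviving endpoint coefficients collapse to the compact form involving $\tau_k$ and $\tau_k - 1$; a secondary subtlety is the one-step index shift between the descent inequality (evaluated at $u_{t-1}$) and the averaging (evaluated at $u_t$), which must be absorbed cleanly so that only the averaged iterate ${\tdx}_k'$ and the final iterate ${\dx}_k'$ appear in the final bound.
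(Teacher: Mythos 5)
Your telescoping algebra is correct and does reproduce the endpoint coefficients $\frac{\beta_k}{2}(\tau_k-1)$ and $-\frac{\beta_k}{2}\tau_k$ of the formal proposition, but the argument has two genuine gaps. First, an iterate mismatch: your one-step inequality is the \emph{pre-update} form, so after weighting and telescoping you control $\frac{1}{W_{T_k}}\sum_{t=1}^{T_k} w_t\,\phi_k(u_{t-1})$, i.e.\ a weighted average of $\phi_k$ over $u_0,\dots,u_{T_k-1}$ with weight $t+2$ on $u_t$. The algorithm's output ${\tdx}_k'=\widetilde{u}_{T_k}=\sum_{t=1}^{T_k}\tfrac{2(t+1)}{T_k(T_k+3)}u_t$ averages $u_1,\dots,u_{T_k}$ with weight $t+1$; it excludes $u_0$ and, crucially, includes the last iterate $u_{T_k}$, whose function value your telescoped sum does not control. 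The remark that the shift is ``absorbed using $\widetilde{u}_0=u_0$, $\theta_1=1$'' does not fix this: bounding $\phi_k$ at the final iterate of a subgradient method is exactly the nontrivial part. The paper's proof avoids it by writing each inner step (including the projection onto $\bX$) as the \emph{exact} minimizer of the proximal subproblem $F_t$ and invoking its strong-convexity optimality (their inequality \eqref{eq:proxslide_eq2}), combined with a Lipschitz ``lookahead'' $f(u_{t+1})-f(u_t)\le\Ip{g_{t+1}}{u_{t+1}-u_t}$; this yields a one-step bound on $\phi_k(u_{t+1})-\phi_k(u)$ at the \emph{new} iterate, at the cost of a term $(2G+\|\delta_t\|)\|u_{t+1}-u_t\|$ that is absorbed by $-\tfrac{\beta_k(t+1)}{4}\|u_{t+1}-u_t\|^2$, and the resulting weights align exactly with the averaging that defines $\widetilde{u}_{T_k}$.

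Second, your error bound requires $\|g_t^{\mathrm{tot}}\|\le 2G$, i.e.\ $\beta_k\|u_{t-1}-z_k\|\le G$. Your inner-loop induction step is fine for $\widehat{u}_t$ (and even surviving the projection needs $z_k\in\bX$, which need not hold), but the base case is $\beta_k\|u_0-z_k\|=\|\nabla_{k,x'}\|=\|\mx_k-\mx_k'\|/\lambda\le G$, i.e.\ $\|\mx_k-\mx_k'\|\le G\lambda$ --- precisely the kind of bound the paper states is only observed empirically and left unproven (see the discussion of $\|\px_k-\px_k'\|=\Ord(G\lambda)$ in the conclusion). The paper's composite analysis never linearizes the quadratic part of $\phi_k$, so its error term only involves subgradients of $f$, bounded by $G$ thanks to Lipschitzness on $\bX$ (this is also why the inner iterates are projected onto $\bX$); in contrast, your plain subgradient analysis of the strongly convex $\phi_k$ makes the error depend on the full composite gradient along the trajectory, which is not known to be $\Ord(G)$. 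To repair the proof you would essentially have to switch to the post-update, prox-subproblem argument used in the paper.
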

		The above lemma guarantees the optimal $O(1/T_k)$ convergence rate for the strongly convex minimization problem: $\min_{{\dx}' \in \bX} \phi_k({\dx}')$, corresponding to the proximal operator.
		By combining the inequalities \eqref{eq:envelope-subgrad-method-pfsk-eq1} and \eqref{eq:envelope-subgrad-method-pfsk-eq2} and the proposition we get: $\Phi_{k} \leq \Phi_{k-1} + k  \Ord(G^2/\beta_k T_k)$. Now, using Lemma~\ref{lem:joint-gap-to-func-gap}, and setting $x = x^*$, $\lambda = \frac\varepsilon{G^2}$, $\beta_k = \frac4{\lambda k}$, $T_k = \Ord(k)$ and $K = \Theta(\frac{G \|x_0 -x^*\|}\varepsilon)$ we get
		\begin{align*}
		f(x_K) - f(x^*) &\leq \Psi_\lambda({\px}_{K},{\px}_{K}') - \Psi_\lambda(x^*,x^*) + G^2 \frac\lambda2 \nonumber \\
		&\leq \frac{8\|x_0 - x^*\|^2 }{\lambda K(K+1)} + \frac{\sum_{k=1}^K k\,{16 G^2}/{\beta_k T_k}}{\lambda K(K+1)} + G^2 \frac\lambda2 = \Ord(\varepsilon) \nonumber \\
		\end{align*}
		Therefore, the total number of PO calls made is $K = \Ord(G \|x_0 -x^*\|/\varepsilon)$ and the total number of FO calls made is $\sum_{k=1}^K T_k = \Ord(K^2) = \Ord(G^2 \|x_0 -x^*\|^2/\varepsilon^2)$.
	\end{proofsketch}
	
}
\fi

\subsection{MOreau Linear minimization oracle Efficient Subgradient (\almo) method}\label{sec:moles}
		\begin{algorithm}%
			\caption{\almo: MOreau Linear minimization oracle Efficient Subgradient method}
			\label{algo:envelope_subgrad_method_lmo}
			\DontPrintSemicolon %
			\SetKwFunction{FWProj}{FW-Based-Projection}	
			\setcounter{AlgoLine}{-1}
			\nonl Use the same steps as \aproj~(Algorithm~\ref{algo:envelope_subgrad_method_proj}), but replace \Cref{algo_line:dual_compute_proj} with: \;
			\setcounter{AlgoLine}{4}
			\SetKwProg{Fn}{}{:}{}
			{
				Set $\dx_k = \FWProj(\dx_{k-1} - \frac{1}{\beta_k} \cdot \nabla_{\mx_{k}} \Psi_{\lambda}(\mx_k,\mx_k'),\, \dx_{k-1},\, \big\lceil \frac{7 K D_\cX^2}{c' \tilde{D}} \big\rceil)$
			}\;
\vspace*{5pt}
			\setcounter{AlgoLine}{15}
			\Fn{\FWProj{$z$, $u_0$, $\hat{T}$}
			}{\tcp{\color{violet} $\hat{T}$ steps of standard Frank-Wolfe for $\min_{u \in \cX} \|{u-z}\|^2$}
				\For{$t = 1, \ldots, \hat{T}$} {
					Set $s_t = \lmo{u_{t-1}-z}$ \;%
					Set $u_{t} = ((t-1) \cdot u_{t-1} + 2 \cdot s_t)/(t+1)$
				}
				\KwRet{$u_{\hat{T}}$}\;
			}
		\end{algorithm}

We now present our results for the LMO setting. A pseudocode of our algorithm,~\almo, is presented in Algorithm~\ref{algo:envelope_subgrad_method_lmo}.~\almo~does exactly the same steps as in~\aproj~(Algorithm~\ref{algo:envelope_subgrad_method_proj}), except that the projection in \Cref{algo_line:dual_compute_proj} of~\aproj~is estimated using the LMO and Frank-Wolfe algorithm. At the outer-step $k$, the output ${\dx}_{k}$ of~\FWProj, which uses $\hat{T} = \Ord(1/\varepsilon)$ LMO calls to approximately project, satisfies the following bound on the projection problem's Wolfe dual gap~\cite{jaggi2013revisiting}:
\begin{align}\label{eq:envelope-subgrad-FWdual-lmo}
\max _{ s \in \cX} \; \beta_k \Ip{\dx_{k} - \big(\dx_{k-1}-({1}/{\beta_k}) \cdot \nabla_{{\mx}_{k}} \Psi_{\lambda}(\mx_{k},\mx_{k}')\big)}{{\dx}_k - s} \leq {4c'\tilde{D}}/{\lambda K k} \,
\end{align}
In practice we can use the above condition as a stopping criterion for \FWProj.
The following theorem, a proof of which is in Appendix \ref{sec:envelope-subgrad-method-cor2-pf}, provides the convergence guarantee.
\begin{theorem}\label{thm:envelope-subgrad-method-cor2}
	Let $f:\bX \to \reals$ be a $G$-Lipschitz continuous proper l.s.c.~convex function equipped with an SFO with variance $\sigma^2$, and $\cX\subseteq \bX = B(0, R)$ be some convex subset of diameter $D_\cX$ equipped with an LMO and contained inside the Euclidean ball of radius around origin.
	If we run~\almo~(Algorithm~\ref{algo:envelope_subgrad_method_lmo}) with inputs $\lambda = \varepsilon/{G^2}$, $\tilde{D} = c D_\cX$, $K =\lceil {2 \sqrt{10+8c(1+c')} G \|x_0 - x^*\|}/{\varepsilon}\rceil$, for some absolute constants $c$,$c' > 0$ and $x^* \in \argmin_{x \in \cX} f(x)$, then, using $\Ord({\frac{G^2 D_\cX^2}{\varepsilon^2}})$ LMO calls and $\Ord(\frac{(G^2+\sigma^2)D_\cX^2}{\varepsilon^2})$ FO calls it outputs $x_K$ satisfying
	$f\left({\px}_{K}\right)- \min_{x \in \cX} f(x) \; \leq \;\; \varepsilon$.
\end{theorem}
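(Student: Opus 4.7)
\begin{proofsketch}
The plan is to mimic the potential-function proof sketched for Theorem~\ref{thm:envelope-subgrad-method-cor1}, replacing the single step that used the optimality condition of the exact Euclidean projection with an inequality that holds for any approximate projection whose Wolfe dual gap is controlled by~\eqref{eq:envelope-subgrad-FWdual-lmo}. Concretely, I would work with the same Lyapunov function
\begin{align*}
\Phi_k \;\defeq\; k(k+1)\bigl(\Psi_\lambda(\px_k,\px_k') - \Psi_\lambda(x,x)\bigr) + (4/\lambda)\bigl(\|\dx_k - x\|^2 + \tau_{k+1}\|\dx_k' - x\|^2\bigr),
\end{align*}
and re-do the one-step descent at $x=x^*$. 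The $x'$-variable contribution is handled exactly as in Theorem~\ref{thm:envelope-subgrad-method-cor1} via Proposition~\ref{prop:main}, since~\almo~inherits the unchanged \proxslide block. The $x$-variable contribution is the only place where the proofs diverge.

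For the $x$-variable descent, observe that the exact-projection inequality used to bound $2k[\iprod{\nabla_{k,x}}{x-\dx_k} + (\beta_k/2)\|\dx_k - \dx_{k-1}\|^2]$ is equivalent to the statement that $\dx_k$ minimizes $\beta_k\iprod{\dx_k - (\dx_{k-1} - \nabla_{k,x}/\beta_k)}{\dx_k - s}$ over $s\in\cX$. In~\almo~this minimum is only attained up to the Frank-Wolfe dual gap bound~\eqref{eq:envelope-subgrad-FWdual-lmo}, so standard manipulation yields
\begin{align*}
2k\bigl[\iprod{\nabla_{k,x}}{x^*-\dx_k} + (\beta_k/2)\|\dx_k - \dx_{k-1}\|^2\bigr] \;\leq\; k\beta_k\bigl[\|\dx_{k-1}-x^*\|^2 - \|\dx_k - x^*\|^2\bigr] + \frac{8c'\tilde{D}}{\lambda K}.
\end{align*}
This adds an extra additive error of $8c'\tilde{D}/(\lambda K)$ to the descent of $\Phi_k$ coming from the $x$-update, on top of the $\Ord(kG^2/(\beta_k T_k))$ term already present from \proxslide. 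Summing across $k=1,\dots,K$ the extra errors contribute a total of $O(c'\tilde{D}/\lambda)$, which with $\tilde{D}=\Theta(D_\cX^2)$ and $\lambda K^2 = \Theta(D_\cX^2/\varepsilon)$ is $O(\varepsilon)\cdot K(K+1)$, i.e.~exactly of the same order as the other terms after dividing by $\lambda K(K+1)$ at the end.

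To certify that each inner \FWProj~call indeed delivers the required dual gap, I would invoke the standard $\Ord(D_\cX^2/\hat{T})$ Frank-Wolfe guarantee for the $2$-smooth objective $\|u-z\|^2$ over $\cX$; a dual-gap certificate of order $\tilde{D}/(Kk)$ is obtainable within $\hat{T}_k = \lceil 7KD_\cX^2/(c'\tilde{D})\rceil$ LMO calls (I absorb the $k$-dependence into a union bound that is tight at $k=K$; if a tighter per-$k$ tuning is desired one can set $\hat{T}_k = \Theta(Kk)$ with no change in the totals). Summing, the LMO cost is $K \cdot \hat{T} = \Ord(K^2 D_\cX^2/\tilde{D}) = \Ord(G^2 D_\cX^2/\varepsilon^2)$, as required. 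The (S)FO count $\sum_k T_k = \Ord(K^2) = \Ord((G^2+\sigma^2)D_\cX^2/\varepsilon^2)$ is unchanged from Theorem~\ref{thm:envelope-subgrad-method-cor1}.

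The main obstacle I anticipate is the first of these steps: carefully deriving the modified projection-descent inequality so that the approximation error enters only as an additive $\Ord(\tilde{D}/(\lambda K))$ term (rather than a multiplicative one that would spoil acceleration), and then choosing $\tilde{D}$ and $c'$ consistently with the constant $c$ appearing in $K$ so that the final bound $f(\px_K) - f(x^*)\leq \varepsilon$ closes with the constant stated in the theorem. Once that bookkeeping is done, combining the modified descent with the unchanged \proxslide guarantee and Lemma~\ref{lem:joint-gap-to-func-gap} (converting $\Psi_\lambda$-gap plus $G^2\lambda/2$ into an $f$-gap) yields the claim along the lines of the Theorem~\ref{thm:envelope-subgrad-method-cor1} sketch.
\end{proofsketch}
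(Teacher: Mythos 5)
Your proposal is correct and follows essentially the same route as the paper: the paper proves a general version of the potential-function argument (Theorem~\ref{thm:envelope-subgrad-method-thm}) in which the projection step is only required to satisfy a Wolfe-dual-gap tolerance $\eta_k$, which enters the one-step descent exactly as your additive $2k\eta_k = 8c'\tilde{D}/(\lambda K)$ term while the \proxslide block is untouched, and Theorem~\ref{thm:envelope-subgrad-method-cor2} is then the corollary obtained by setting $\eta_k = 4c'\tilde{D}/(\lambda K k)$ and certifying it with the standard Frank--Wolfe dual-gap guarantee, just as you outline. Two small clean-ups: no union bound or per-$k$ tuning of the inner budget is needed, because the FW dual gap for the $\beta_k$-smooth projection objective is $\Ord(\beta_k D_\cX^2/\hat{T}) = \Ord\big(D_\cX^2/(\lambda k \hat{T})\big)$ and hence decays in $k$ at exactly the same $1/k$ rate as the required tolerance, so the fixed $\hat{T} = \big\lceil 7KD_\cX^2/(c'\tilde{D})\big\rceil$ suffices uniformly in $k$ (your parenthetical alternative $\hat{T}_k = \Theta(Kk)$ would in fact inflate the LMO total to $\Theta(K^3)$, not leave it unchanged); and the inner product in your displayed inequality should read $\Ip{\nabla_{k,x}}{\dx_k - x^*}$ rather than $\Ip{\nabla_{k,x}}{x^*-\dx_k}$, as in the appendix derivation from the Wolfe gap (the main-text sketch's sign is a typo you inherited).
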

{\bf Remarks}: 
Thus our algorithm obtains the optimal $\Ord(\varepsilon^{-2})$ dimension independent 
\focc and \lmocc
for general nonsmooth functions~\cite{lan2013complexity}. Similar to \aproj, here also, we require FO/SFO of $f$ to be well-defined in $\bX$. If $f$ is a maximum of smooth convex functions, then we can get similar 
\pocc
by applying min-max saddle point approaches~\cite{he2015semi}. But even for such functions, it is non-trivial to extend saddle point approaches to stochastic FO, which is important in practice. In contrast, our result matches the optimal 
\focc
(on all key parameters) of unconstrained stochastic-\psgd method.

\section{Applications}
 
 \label{sec:applications}
We first explain the gain of \aproj~in practical applications. %
One of the main applications of our method is Empirical Risk Minimization (ERM) with nonsmooth loss functions. 
For a nonsmooth loss  $f_i$  for the $i$th training example in a set of $n$ examples, the general form of ERM is:
\begin{align}\label{eq:erm}
	\min_{x \in \cX \subseteq \reals^d} \frac{1}{n} \sum_{i=1}^n f_i(x) %
	\; \text{ For example, }\quad
	\min_{{X \in \R^{m \times p} ;  \norm{X}_{\rm nuc} \leq r}} \frac{1}{n} \sum_{i=1}^{n} \max(0,1-b_{i}\iprod{X}{A_i}) \;,
\end{align}
which is known as 
the {\em low rank SVM}~\cite{wolf2007modeling,wang2013efficient,razzak2019sparse} as the nuclear norm constraint induces low rank solutions. 
As the cost of a single PO call involves a full SVD on a potentially full rank $X$, 
\aproj significantly improves over the competing baseline as we showcase in Fig.~\ref{fig:aproj_imagewoof}. 
There are numerous examples of ERMs with costly POs to a nuclear norm ball (e.g.~max-margin collaborative filtering \cite{srebro2005maximum}), to an $\ell_1$ norm ball
(e.g.~sparse SVM \cite{bradley1998feature,zhu20041,bach2012optimization}), 
and to a large number of linear constraints (e.g.~robust classification \cite{ben2012efficient}).
One notable example is {\em SVM with hard constraints} on a subset of  the training data, so that
some predictions are constrained to be always accurate \cite{nguyen2014efficient} (See Appendix~\ref{sec:hard_SVM}). 

In all these examples, PO calls can be more costly than FO calls, making \aproj~attractive.  
In comparison, popular {\it accelerated proximal point methods}, such as FISTA \cite{beck2009fast}, cannot handle general nonsmooth losses. 
The standard {\it projected subgradient methods} suffer from $\Ord(\varepsilon^{-2})$ \pocc. {\it Mirror descent}~\cite{nemirovsky1983problem} may give better $d$ dependence, but it too requires $O(\varepsilon^{-2})$ (proximal) operations.

Now, several nonsmooth loss functions have a special structure where they can be written as a {\it smooth minimax problem}. Such (stochastic) problems can be  solved using 
$\Ord_\varepsilon(\varepsilon^{-1})$ (S)FO and PO calls \cite{nesterov2005smooth}. However,  the resulting complexity 
scales up with the dimension $d$ or the number of samples $n$.
Thus the \pocc of the minimax formulations becomes inefficient (even with variance reduction~\cite{palaniappan2016stochastic,carmon2019variance}), whenever $n$ or $d$ gets large. In the deterministic setting, each step of the optimization problem requires gradient of the entire empirical risk function, so for problems with large $n$ and small $\varepsilon$, total time complexity can be significantly higher than \aproj. See Appendix~\ref{sec:ell1_SVM} for exact complexities.

Further, beyond ERM, nice minimax representations might not always exist. 
For example, in reinforcement learning/optimal control setting, $f$ could be an
(already trained) {\it input-convex neural network}~\cite{amos2017input,chen2018optimal} approximating the Q-function over a continuous constrained action space~\cite{chen2020input}.

For several of the above examples, LMO might be preferred if it is significantly more efficient than a PO call e.g., for high-dimensional low rank SVM, a LMO call only requires computing top singular vector, as opposed to full SVD required by a PO.
Further, LMO-based methods have an additional benefit of preserving the desired structure of the solution, such as sparse  and low rank structures~\cite{clarkson2010coresets}. 
This makes \almo~particularly attractive, for example, 
in differentially private collaborative filtering \cite{jain2018differentially}, where structured updates lead to improved privacy guarantees. 
In 
Appendix~\ref{sec:applications_details},
we present the details of some these examples, and give analytical comparisons to competing methods.

\section{Empirical Results}

\label{sec:exps}

\begin{figure}
	\begin{minipage}{.5\textwidth}
	\centering
	\includegraphics[width=0.95\textwidth]{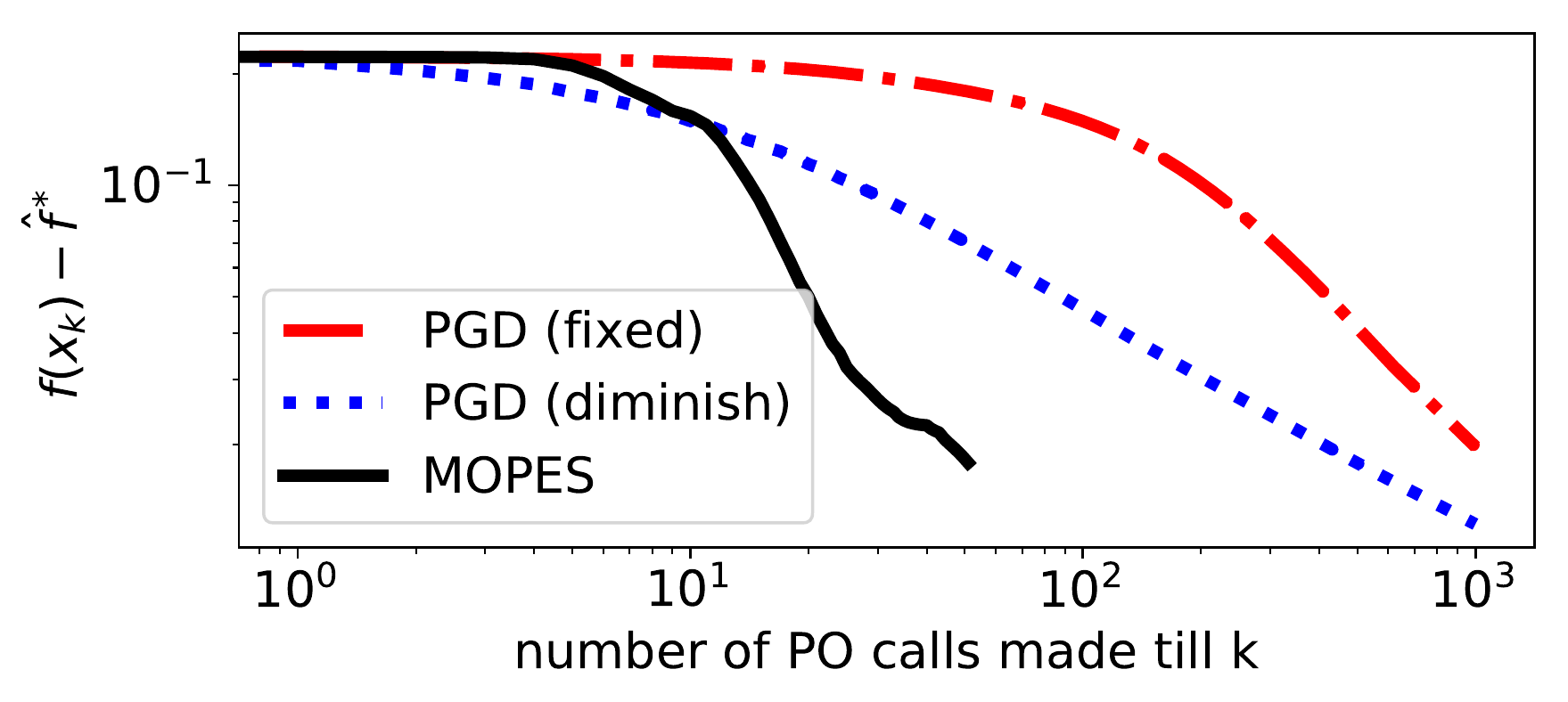}
	\includegraphics[width=0.95\textwidth]{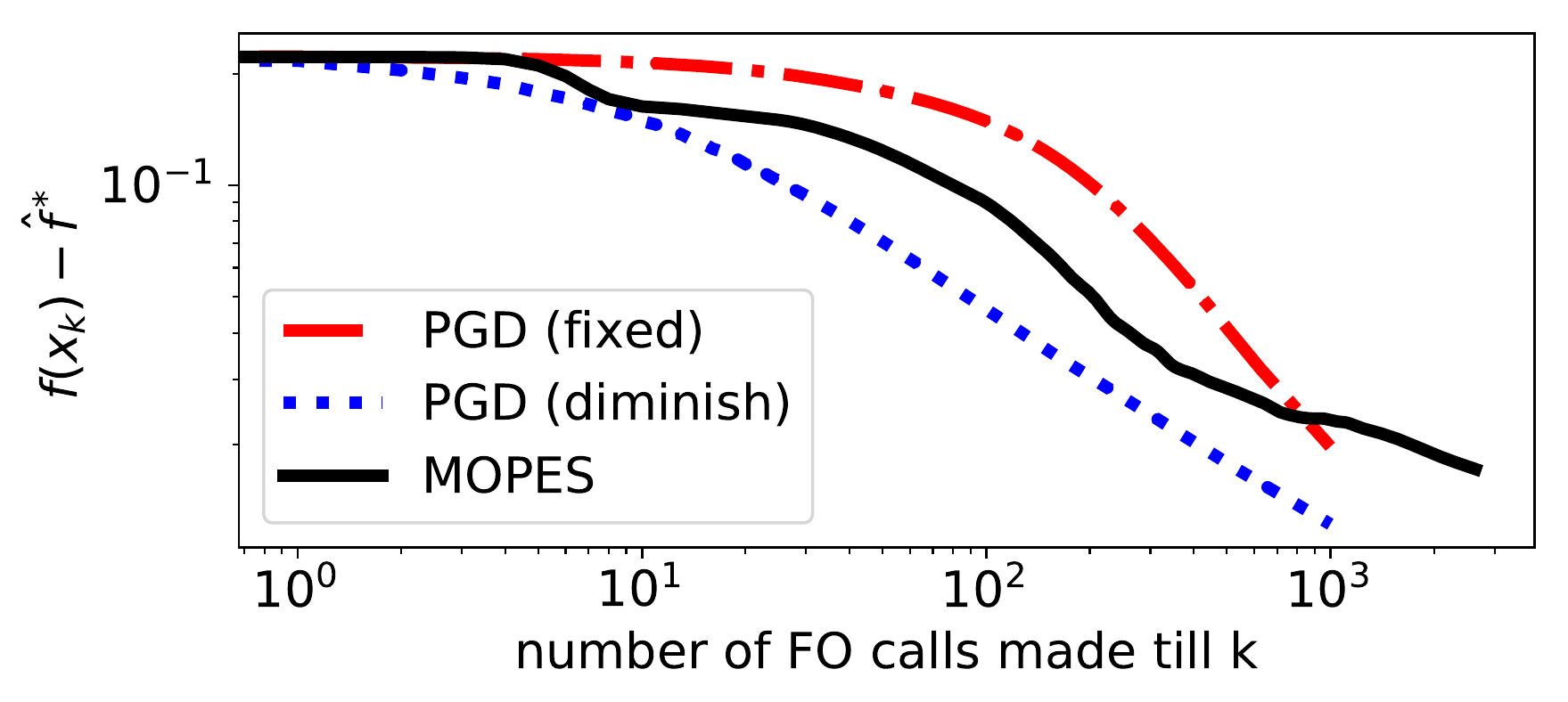}	
	\vspace*{-10pt}
	\caption{
		\aproj uses significantly fewer PO calls and comparable number of FO calls than \psgd
	}
	\label{fig:aproj_imagewoof}
\end{minipage}\hspace*{5pt}
	\begin{minipage}{.5\textwidth}
		\centering
		\includegraphics[width=0.95\textwidth]{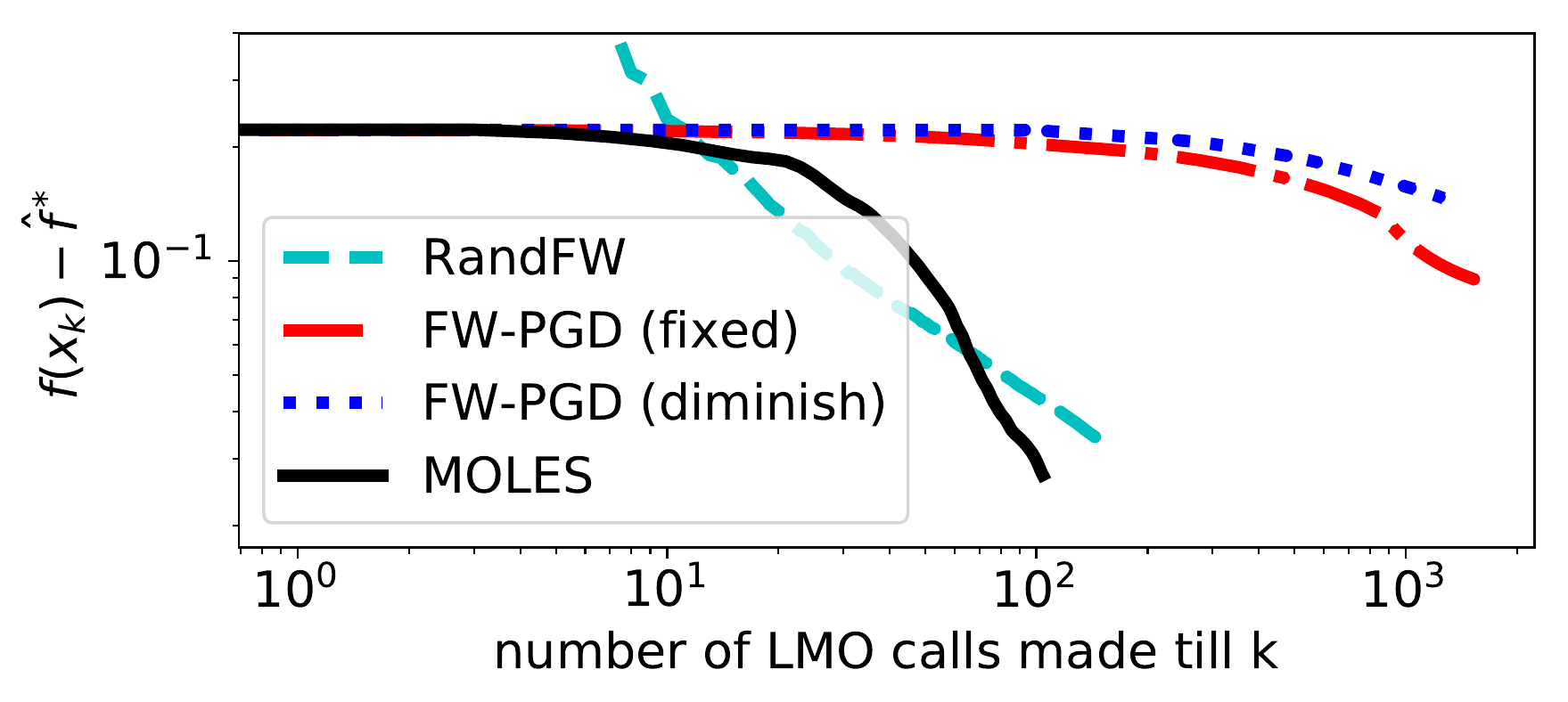}
		\includegraphics[width=0.95\textwidth]{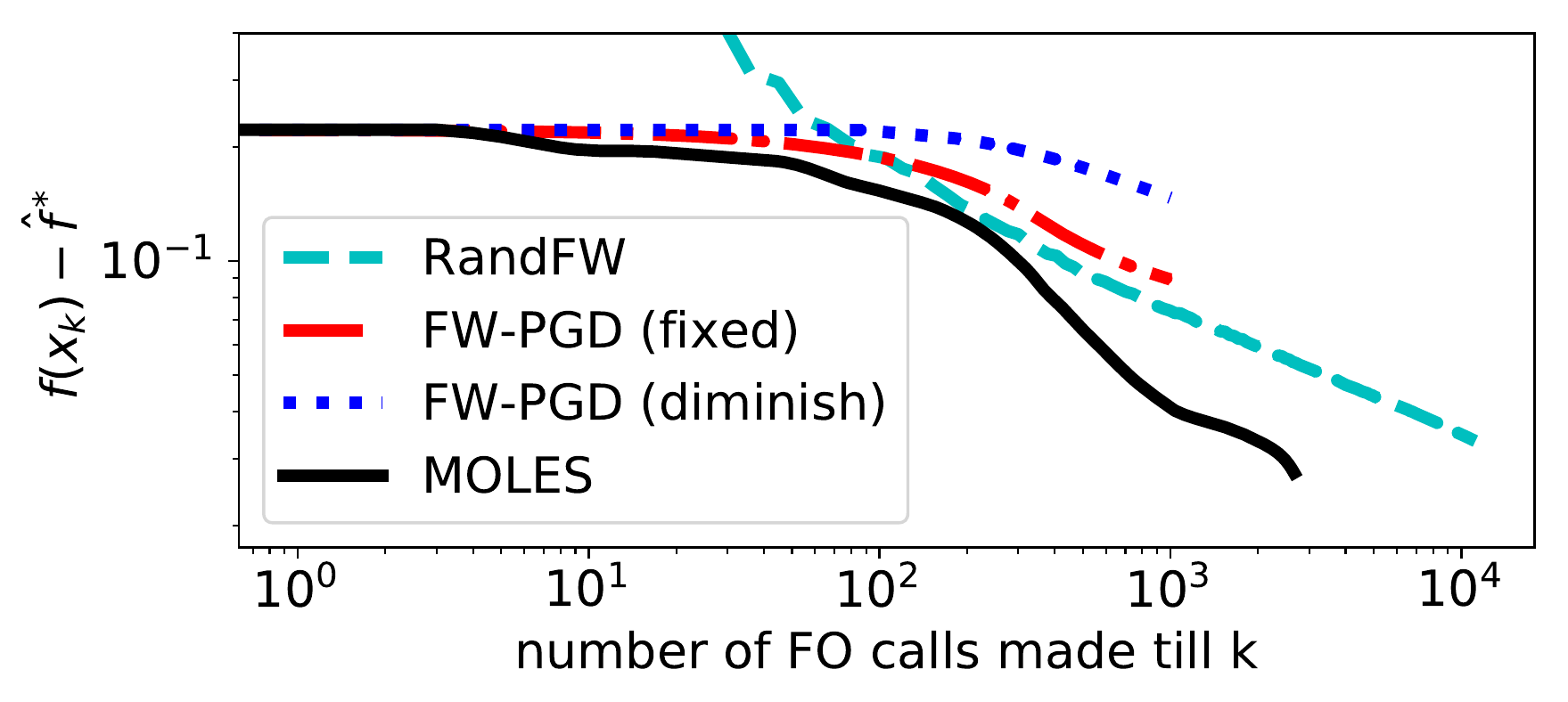}	
		\vspace*{-10pt}
		\caption{
			\almo uses fewer LMO calls and similar number of FO calls than \fwpsgd and \randfw
		}
		\label{fig:almo_imagewoof}
	\end{minipage}
	
\end{figure}

We experimentally evaluate\footnote{Code for the experiments is available at \url{https://github.com/tkkiran/MoreauSmoothing}} \aproj (Algorithm~\ref{algo:envelope_subgrad_method_proj}) and \almo (Algorithm~\ref{algo:envelope_subgrad_method_lmo}) methods on a low rank SVM problem \cite{wolf2007modeling} of the form \eqref{eq:erm} on a subset of the Imagewoof 2.0 dataset~\cite{How2019Imagenette}.
The training data contains $n=400$ samples $\{(A_i, y_i)\}_{i=1}^n$ where $A_i$ is a $224\times224$ grayscale image labeled using $y_i \in \{0,1\}$. Note that the effective dimension is $d = 50176$. We use $r=0.1$ as nuclear norm ball radius of $\cX$.
First, we compare the PO and FO efficiencies of \aproj~with those of \psgd with a fixed and \psgd with a diminishing stepsize. 
In Figure~\ref{fig:aproj_imagewoof} we plot the mean (over 10 runs) sub-optimality gap: $f(x_k) - \hat{f}^*$,  of the iterates against the number of PO (top) and FO (bottom) calls, respectively, used to obtain that iterate. 
Next, we compare the LMO and FO efficiencies of \almo~with those of \fwpsgd (see Algorithm~\ref{algo:fw_proj_subgrad_method} in Appendix~\ref{sec:fw_proj_subgrad_method}) and Randomized Frank-Wolfe (\randfw) \cite[Theorem 5]{lan2013complexity} methods with a fixed and diminishing stepsizes. %
In Figure~\ref{fig:almo_imagewoof} we plot the mean (over 10 runs) sub-optimality gap: $f(x_k) - \hat{f}^*$,  of the iterates against the number of LMO (top) and FO (bottom) calls, respectively, used to obtain that iterate. 
In both these plots, while \aproj/\almo and baselines have comparable 
\focc,
\aproj/\almo is significantly more efficient in the number of PO/LMO calls, matching our Theorems~\ref{thm:envelope-subgrad-method-cor1} and \ref{thm:envelope-subgrad-method-cor2}. 
As the nuclear norm ball has a non-trivial projection/LMO, \pocc/\lmocc will dominate the total run-time as $m$ becomes larger for $X\in{\mathbb R}^{m\times m}$. Note that matrix mirror descent~\cite{kulis2009low} 
would also require $O(\varepsilon^{-2})$ SVD based proximal operations. 
We provide additional experimental details in Appendix~\ref{sec:exps_details}.

\section{Conclusion}
\label{sec:conclusion}
We study a canonical problem in  optimization: minimizing a nonsmooth Lipschitz continuous convex function over a convex constraint set. 
We assume that  the function is accessed with a first-order oracle (FO) and the set is accessed with either a projection oracle (PO) or a linear minimization oracle (LMO). 
In this general setting, we address the fundamental question of reducing the number of accesses to the function and the set.   
When using projections, we introduce \aproj, and show that it finds an $\varepsilon$-suboptimal solution with $\Ord(\varepsilon^{-2})$ FO calls and $\Ord(\varepsilon^{-1})$ PO calls. 
This is optimal in the number of FO calls and significantly improves over competing methods in the number of PO calls (see Table~\ref{tab:results}). 
When using linear minimizations, we introduce \almo, and show that it finds an $\varepsilon$-suboptimal solution with $\Ord(\varepsilon^{-2})$ FO and LMO calls. This is optimal in both the number of PO and the number of LMO calls. This resolves a question left open since \cite{white1993extension} on designing the optimal Frank-Wolfe type algorithm for nonsmooth functions.

The two properties we need of the superset $\bX \supseteq \cX$ are that (a) it is easy to project onto $\bX$ and (b) $f$ is $G$-Lipschitz on $\bX$. In our paper, we choose $\bX$ to be a Euclidean ball (which is easy to project to) but any other choice of $\bX$ which satisfies the above properties works just as well. For example, if $f$ is Lipschitz everywhere, we can set $\bX = \vX$ and ignore the explicit projection to $\bX$ in \cref{algo_line:prox-slide-proj_proj} of Algorithm \ref{algo:envelope_subgrad_method_proj}.
However, even if $f$ is $G$-Lipschitz inside the constraint $\cX$, $f$ could (i) have unbounded Lipschitz constant, or (ii) be undefined just outside of $\cX$. Thus an $\bX$ satisfying our requirements may not exist. In our experiments, we do not explicitly project onto $\bX$ (\cref{algo_line:prox-slide-proj_proj}) but still observed that $\|x_k - x_k'\| = \Ord(G\lambda) = \Ord(\varepsilon)$ and small, which implies that the iterates $x_k'$ are close to $\cX$. This hints that we may only need Lipschitzness over a much smaller set, say $\cX + B(0, \Ord(G\lambda))$, but we do not have a proof for this conjecture. Theoretically, we can work around the issue (ii) above by minimizing the convex extension $f_\cX : \mathbb{R}^d \rightarrow \mathbb{R}$ of the function $f$ from the set $\cX$, defined as $f_\cX(x') \defeq \max_{x \in \cX} \max_{g \in \partial f(x)} f(x) + \Ip{g}{x'-x}$. The extension $f_\cX$ has the same value as $f$ inside $\cX$ and is $G$-Lipschitz everywhere. Therefore the minimization problems $\min_{x \in \cX} f(x)$ and $\min_{x \in \cX} f_\cX(x')$ are equivalent. However, it is not clear if we can estimate/approximate the gradients of $f_\cX$ efficiently. We did not find any relevant prior work and leave this question for future work.

Another possible direction of future work is developing $\varepsilon$-horizon oblivious algorithms, where we need not fix $K$ and $\varepsilon$ a priori. In our experiments, we observed that varying $\lambda$ according to $\lambda_k = O(\frac{D_\cX}{Gk})$ and $\beta_k = \frac{4}{\lambda_k k}$ works just as well as fixing it. However, obtaining any guarantee for this scheme seems challenging and again seems to require the bound $\|\px_k - \px_k'\| = \Ord(G \lambda_k)$.

\newpage

\section*{Broader Impact}
As this is foundational research that is theoretical in nature, it is hard to predict any foreseeable societal consequence.

\bibliography{optimization}

\newpage
\appendix
\section*{Appendix}

\section{Supplementary results}
\label{sec:supp_result}

{\color{black}
\subsection{Intuition behind the design of \aproj and a failed attempt}
\label{sec:concept} 
In this section we study the main ideas behind the design of \aproj method through a failed attempt.
Only for the section, for simplicity, we assume that $\bX$ is the whole vector space, and $f$ is $G$ Lipschitz in $\bX$.
Recall that we want to solve the problem~\eqref{eq:joint_moreau_opt},
\begin{align}
\min_{x \in \cX, x' \in \bX} [ \Psi_{\lambda}(x,x') = \psi_{\lambda}(x,x') + f(x') ] \,.
\end{align}
Notice that this is a composite objective which is a sum of a $2/\lambda$-smooth function $\psi_\lambda$ and a nonsmooth function $f$. This implies that, if we have access to the proximal operator (recall Definition \ref{def:Moreau}) for $f$
\begin{align}\label{eq:prox_op}
\prox_{f/t}(z) \defeq \arg\min_{x \in \bX} f(x) + \frac{t}{2}\|x-z\|^2\,,
\end{align}
then theoretically we can solve this problem using accelerated proximal gradient algorithm (\apgd)~\cite{beck2009fast,nesterov2013gradient,tseng2008accelerated}, which has the following update rule
\begin{empheq}[box=\widefbox]{align}
\begin{aligned}
\beta_{k} &\gets  {4}/{\lambda k}\;, \;\gamma_{k} \gets {2}/{(k+1)} \\
({\mx}_{k}, {\mx}_{k}') &\gets \left(1-\gamma_{k}\right) ({\px}_{k-1},{\px}_{k-1}')+\gamma_{k} ({\dx}_{k-1}, {\dx}_{k-1}') \\
{\dx}_{k} &\gets  \cP_\cX \left({\dx}_{k-1} - \nabla_{x} \psi_{\lambda}({{\mx}_{k}}, {\mx}_{k}')/\beta_k \right) \\
{\dx}_{k}' &\gets \prox_{f/\beta_k} \left({\dx}_{k-1}' - \nabla_{x'} \psi_{\lambda}({{\mx}_{k}}, {\mx}_{k}')/\beta_k\right) \\
({\px}_{k}, {\px}_{k}') &\gets \left(1-\gamma_{k}\right) ({\px}_{k-1},{\px}_{k-1}') +\gamma_{k} ({\dx}_{k}, {\dx}_{k}')
\end{aligned}
\,,\label{eq:apgd_update}\tag{\apgd}
\end{empheq}
for some stepsize $1/\beta_k$ and iterate weight $\gamma_k$. 

{
Notice that this update rule is different from the standard accelerated schemes, because the latter either first update the primal variables $(x,x')$ and then extrapolate the dual variables $(z,z')$~\cite{nesterov1998introductory} or simultaneously update them both~\cite{nesterov1983method,lan2011primal}, whereas \eqref{eq:apgd_update}, which is fashioned along the lines of~\cite{tseng2008accelerated}, first updates $(z,z')$ using proximal\footnote{projection $\cP_\cX$ could be considered as a proximal step for the $0$-$\infty$ indicator function for the set $\cX$} step and then extrapolates these to update $(x,x')$. Advantage of \cite{tseng2008accelerated} over the standard rule are three fold; former only needs one proximal step per variable (as opposed to two in~\cite{nesterov1983method,lan2011primal}) per iteration (which makes it practically faster), or keeps the dual and middle iterates $z_k$, $y_k$ feasible (as opposed to~\cite[(2.2.17)]{nesterov1998introductory}), and can easily handle stochastic FO and constraints~\cite{lan2012optimal}.
Another reason for the choice, which will be evident later on, is that, our update rule can simultaneously provide the optimal complexity for the smooth $\psi_\lambda$ and nonsmooth $f$ parts of the composite function, $\Psi_\lambda$~\eqref{eq:joint_moreau_opt}~\cite{lan2016gradient}.
}

With the right choice of $\beta_k$, $\gamma_k$, \eqref{eq:apgd_update} can find an $\varepsilon$-approximate solution to the problem~\eqref{eq:joint_moreau_opt}, $({\px}_K, {\px}_K')$, in ${\cal O}({\sqrt{2/\lambda\varepsilon}})$ steps. Now if we choose $\lambda = {\cal O}({\varepsilon})$, we can show that ${\px}_K$ is also an ${\cal O}({\varepsilon})$ solution of our original nonsmooth constrained problem~\eqref{eq:nsco}. This is formalized in the Lemma \ref{lem:joint-gap-to-func-gap}.
Thus applying \eqref{eq:apgd_update} on \eqref{eq:joint_moreau_opt} with $\lambda=\varepsilon/G^2$ gives us an $\varepsilon$ solution to the original problem~\eqref{eq:nsco} using only $K = {\cal O}({G/\varepsilon})$ projections, which is a significant improvement over the ${\cal O}({G^2/\varepsilon^2})$ PO calls used by the standard subgradient method. 

For a general $G$-Lipschitz convex function $f$, we cannot solve $\prox_{f/\beta_k}$ exactly, and hence we resort to some approximate solution. We emphasize here that it is not immediately evident that we can implement an inexact $\prox$ operator, and still maintain that the total number of FO calls used by this inexact \apgd~method match the optimal lowerbound ${\cal O}({G^2/\varepsilon^2})$~\cite{nemirovsky1983problem,nesterov1998introductory}. Perhaps surprisingly, this is achieved using the Gradient Sliding method~\cite{lan2016gradient}, which proposes a specific form of an inexact \apgd. Note that the constrained variable $x \in \cX$ is not an input to the nonsmooth part $f$ of $\Psi_{\lambda}$, which means that approximately resolving the $\prox$ operator $\prox_{f/t}$ does not require any projection.

As an intermediate algorithm we first present \eqref{eq:iapgd_update}, 
which is derived from \eqref{eq:apgd_update} but replaces the proximal update of ${\dx}_{k}'$ with an inexact resolution for prox operator ${\rm prox}_{f/\beta_k}$ up to an approximation error of $\delta$. Notice that $\delta=0$, implies that $z_k$ is an exact resolution of the operation $\prox_{f/\beta_k}(\hat{\dx}_{k}')$. 
The specific choice of the approximation error is important, as other notions of approximation error of the proximal operator in the context of \apgd~(such as those in~\cite{schmidt2011convergence}) do not explicitly control the distance $\|x'-{\dx}_{k}'\|^2$ which is crucial for our guarantee. Although with $\delta=\varepsilon$,
\eqref{eq:iapgd_update} would require $\Ord(1/\varepsilon^3)$ FO calls, we provide the details of its analysis, in the next theorem, as it showcases some of the ideas behind the design of our our main algorithm (Algorithm~\ref{algo:envelope_subgrad_method}).
\begin{empheq}[box=\widefbox]{align}
\begin{aligned}
&\text{Use the update rule of \eqref{eq:apgd_update}, but replace  $\prox$ step by the following:} \nonumber \\
&\text{find ${\dx}_{k}'$ satisfying the following for all $x' \in \bX$} \nonumber \\
&\frac{\beta_k}{2} \|x' - {\dx}_{k}'\|^2 + f({\dx}_{k}') + \frac{\beta_k}{2} \|{\dx}_{k}' -\hat{\dx}_{k}' \|^2 \;\leq\;  f({x}')  + \frac{\beta_k}{2} \|{x}' - \hat{\dx}_{k}'\|^2  + \delta\;,\\
&\text{ where }\hat{\dx}_{k}' = ({\dx}_{k-1}' - \nabla_{x'} \psi_{\lambda}({{\mx}_{k}}, {\mx}_{k}')/\beta_k) 
\end{aligned}
\label{eq:iapgd_update}\tag{\iapgd}
\end{empheq}
\begin{theorem}\label{thm:concept-envelope-subgrad-method-thm}
Let $f:\bX \to \reals$ be a $G$-Lipschitz continuous convex function and $\cX\subseteq \bX$ be any convex  set with a diameter $D_\cX$ and projection oracle $\cP_\cX$. If we choose $\lambda = \varepsilon/G^2$ and $\delta=\varepsilon$, then after $K = \Ord({G D_\cX/\varepsilon})$ iterations of the \ref{eq:iapgd_update} update rule, initialized with $y_0' = y_0 = x_0' = x_0$,
finds ${\px}_K \in \cX$ satisfying $f({\px}_K) - \min_{x\in\cX} f(x) \leq {\varepsilon}$. 
Further, $\Ord({G^2 D_\cX^2/\varepsilon^2})$ iterations of a standard subgradient method ensures the condition in \eqref{eq:iapgd_update}. 
In total, this algorithm requires $\Ord({G^3D_\cX^3/\varepsilon^3})$ FO calls and $\Ord(GD_\cX/\varepsilon)$ PO calls.
\end{theorem}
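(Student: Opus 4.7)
The plan is to treat the \eqref{eq:iapgd_update} update as a Tseng-style accelerated proximal gradient \cite{tseng2008accelerated} applied to the composite objective $\Psi_\lambda = \psi_\lambda + f$, exploit the $2/\lambda$-smoothness of $\psi_\lambda$ together with the approximate prox inequality on $f$, and then invoke Lemma~\ref{lem:joint-gap-to-func-gap} to convert the guarantee on $\Psi_\lambda$ into one on $f$. Let $x^* \in \argmin_{x \in \cX} f(x)$. The central object will be the standard accelerated potential
\begin{align*}
\Phi_k \;\defeq\; k(k+1)\bigl(\Psi_\lambda({\px}_k,{\px}_k') - \Psi_\lambda(x^*,x^*)\bigr) \;+\; \tfrac{4}{\lambda}\bigl(\|{\dx}_k - x^*\|^2 + \|{\dx}_k' - x^*\|^2\bigr),
\end{align*}
for which $\Phi_0 = \tfrac{8}{\lambda}\|x_0-x^*\|^2$ (using ${\dx}_0={\dx}_0'=x_0$) and whose single-step descent I will establish as $\Phi_k \le \Phi_{k-1} + 2k\delta$.

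To obtain the single-step bound, I will decompose the analysis along the two coordinates. On the $x$-coordinate, the projected gradient step ${\dx}_k = \cP_\cX({\dx}_{k-1} - \nabla_x\psi_\lambda({\mx}_k,{\mx}_k')/\beta_k)$ gives the standard three-point projection inequality yielding the telescoping term $\tfrac{\beta_k}{2}(\|{\dx}_{k-1}-x^*\|^2-\|{\dx}_k-x^*\|^2)$ together with the quadratic correction $\tfrac{\beta_k}{2}\|{\dx}_k-{\dx}_{k-1}\|^2$. On the $x'$-coordinate, the approximate prox condition in \eqref{eq:iapgd_update} is exactly the analogous three-point inequality for the composite part around $\hat{\dx}_k'$, up to an additive $\delta$. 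Combining these two one-step bounds with the $2/\lambda$-smoothness of $\psi_\lambda$ and convexity of $\Psi_\lambda$ applied at the extrapolation ${\mx}_k = (1-\gamma_k){\px}_{k-1} + \gamma_k{\dx}_{k-1}$ (and similarly for ${\mx}_k'$), following~\cite{tseng2008accelerated}, produces exactly $\Phi_k \le \Phi_{k-1} + 2k\delta$. Telescoping yields
\begin{align*}
\Psi_\lambda({\px}_K,{\px}_K') - \Psi_\lambda(x^*,x^*) \;\le\; \frac{8\|x_0-x^*\|^2}{\lambda K(K+1)} + \delta ,
\end{align*}
and applying Lemma~\ref{lem:joint-gap-to-func-gap} adds the smoothing bias $G^2\lambda/2$. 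With $\lambda=\varepsilon/G^2$ and $\delta=\varepsilon$ the bias and approximation terms are each $\Ord(\varepsilon)$, and choosing $K=\Ord(GD_\cX/\varepsilon)$ drives the acceleration term to $\Ord(\varepsilon)$ as well, giving the outer-loop guarantee and the $\Ord(GD_\cX/\varepsilon)$ PO-call count (exactly one PO per outer iteration).

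The main obstacle is bounding the inner-loop cost of certifying \eqref{eq:iapgd_update} with a \emph{plain} subgradient method, i.e.\ without exploiting Prox-Slide. The inner objective $\phi_k(u) \defeq f(u) + (\beta_k/2)\|u - \hat{\dx}_k'\|^2$ is $\beta_k$-strongly convex, and restricted to $\bX$ of radius $R=\Ord(D_\cX)$ has subgradients bounded by $G + \beta_k R$. Running $T_k$ subgradient-descent steps on $\phi_k$ over $\bX$ with standard averaging yields, via the classical $\Ord(GR/\sqrt{T_k})$ rate for Lipschitz convex minimization, a function-value error of $\Ord(GD_\cX/\sqrt{T_k})$; I will then upgrade this to the three-point form required in \eqref{eq:iapgd_update} by using strong convexity of $\phi_k$ to convert the function-value gap into the distance term $(\beta_k/2)\|x'-{\dx}_k'\|^2$ up to constants. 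Driving the resulting error below $\delta=\varepsilon$ forces $T_k = \Ord(G^2D_\cX^2/\varepsilon^2)$ inner steps, uniformly in $k$. Multiplying by $K=\Ord(GD_\cX/\varepsilon)$ outer iterations gives the claimed $\Ord(G^3D_\cX^3/\varepsilon^3)$ total FO complexity; this cubic-rate deficiency is precisely what motivates replacing the plain subgradient inner solver with Prox-Slide in~\aproj.
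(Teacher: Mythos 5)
Your outer-loop argument is essentially the paper's own proof: the same potential $\Phi_k$, the same split into the three-point projection inequality for the $x$-coordinate and the $\delta$-approximate prox inequality \eqref{eq:iapgd_update} for the $x'$-coordinate, combined with $2/\lambda$-smoothness and convexity of $\psi_\lambda$ \`a la \cite{tseng2008accelerated}, telescoped and converted via Lemma~\ref{lem:joint-gap-to-func-gap}. That part is fine and yields the $\Ord(GD_\cX/\varepsilon)$ PO count exactly as in the paper.

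The genuine gap is in your inner-loop plan. First, if you run a \emph{plain} subgradient method on $\phi_k(u)=f(u)+(\beta_k/2)\|u-\hat{\dx}_k'\|^2$ (i.e.\ linearize the whole objective), the relevant Lipschitz constant is $G+\beta_k R$, as you yourself note, and with $\beta_k=4/(\lambda k)=4G^2/(\varepsilon k)$ this is $\Theta(G^2 D_\cX/(\varepsilon k))\gg G$ for small $k$; the classical rate is then $\Ord((G+\beta_k R)D_\cX/\sqrt{T_k})$, not $\Ord(GD_\cX/\sqrt{T_k})$, so $T_k$ is not uniformly $\Ord(G^2D_\cX^2/\varepsilon^2)$ and the early outer iterations alone already push the total FO count to $\Ord(G^4D_\cX^4/\varepsilon^4)$. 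Second, your proposed ``upgrade'' from a function-value gap to the three-point form of \eqref{eq:iapgd_update} via strong convexity is lossy: from $\phi_k(z)-\min\phi_k\le\delta_1$ and $\phi_k(x')\ge\phi_k(u^*)+\tfrac{\beta_k}{2}\|x'-u^*\|^2$ you only get
\begin{align}
\tfrac{\beta_k}{2}\|x'-z\|^2+\phi_k(z)-\phi_k(x') \;\le\; \delta_1 + \Ord\big(D_\cX\sqrt{\beta_k\delta_1}\big),
\end{align}
and the cross term $D_\cX\sqrt{\beta_k\delta_1}=\Ord(GD_\cX\sqrt{\delta_1/(\varepsilon k)})$ forces $\delta_1=\Ord(\varepsilon^3 k/(G^2D_\cX^2))\ll\varepsilon$, again inflating $T_k$ far beyond the claimed bound. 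The paper sidesteps both issues with its proximal subgradient method (Lemma~\ref{lem:subgradient_method}): each inner step keeps the quadratic $(\beta_k/2)\|u-\hat{\dx}_k'\|^2$ exact in the subproblem and linearizes only $f$, so the error depends on $G$ alone, and the averaged one-step analysis retains the term $\tfrac{\beta}{2}\|\widetilde u_T-u\|^2$ on the left-hand side, delivering the required three-point inequality $\tfrac{\beta}{2}\|\widetilde u_T-u\|^2+f_{\beta,x}(\widetilde u_T)-f_{\beta,x}(u)\le 2G\|u_0-u\|/\sqrt{T}$ directly, with no conversion. Replace your inner solver and conversion step by that argument and the stated $\Ord(G^2D_\cX^2/\varepsilon^2)$ per outer step, hence $\Ord(G^3D_\cX^3/\varepsilon^3)$ total FO calls, follows.
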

{\bf Remarks:} Even though \ref{eq:iapgd_update} only achieves a FO-CC of $\Ord(1/\varepsilon^3)$, the main take away from this result should be that with this right choice for approximate resolvent of the \prox operator $\prox_{f/\beta_k}$ \eqref{eq:iapgd_update}, we can achieve $\Ord(1/\varepsilon)$ PO-CC. This is exploited by our \aproj method (Algorithm \ref{algo:envelope_subgrad_method_proj}) which uses a more efficient \proxslide procedure~\cite{lan2016gradient}
to approximately resolve the \prox~operator, so as to obtain a PO-CC of $\Ord(1/\varepsilon)$ while still maintaining the optimal FO-CC o $\Ord(1/\varepsilon^2)$.
\begin{proof}[Proof of Theorem \ref{thm:concept-envelope-subgrad-method-thm}]
Now consider the following potential (Lyapunov) function from \cite{bansal2017potential} for arbitrary $x \in \cX$ and $x' \in \bX$: 
\begin{align}
\Phi_k \defeq k(k+1) (\Psi_\lambda({\px}_{k},{\px}_{k}') - \Psi_\lambda(x,x')) + ({4}/{\lambda}) \| ({\dx}_{k}, {\dx}_{k}') - (x,x')\|^2
\end{align}
We will prove that this potential satisfy the following approximate descent condition $\Phi_{k} \leq \Phi_{k-1} + k \varepsilon$ as follows. Notice that by $2/\lambda$-smoothness and convexity of $\psi_\lambda$
\begin{align}
\psi_\lambda({\px}_{k},{\px}_{k}') \leq\; &\psi_\lambda({\mx}_{k},{\mx}_{k}') + \Ip{\nabla_k }{({\px}_{k},{\px}_{k}') - ({\mx}_{k},{\mx}_{k}')} + \frac1{\lambda}  \|({\px}_{k},{\px}_{k}') - ({\mx}_{k},{\mx}_{k}')\|^2 \nonumber \\
\leq\; &(1-\gamma_k)\psi_\lambda({\px}_{k-1},{\px}_{k-1}') + \nonumber \\
&\gamma_k [\psi_\lambda({\mx}_{k},{\mx}_{k}') + \Ip{\nabla_k }{({\dx}_{k},{\dx}_{k}') - ({\mx}_{k},{\mx}_{k}')} + \frac{\gamma_k}{\lambda}  \|({\dx}_{k},{\dx}_{k}') - ({\dx}_{k-1},{\dx}_{k-1}')\|^2]
\end{align}
where we use the shorthand $\nabla_k \defeq [\nabla_{k,x}^T \nabla_{k,x'}^T]^T \defeq [\nabla_{x} \psi_\lambda({\mx}_{k},{\mx}_{k}')^T \; \nabla_{x'} \psi_\lambda({\mx}_{k},{\mx}_{k}')^T]^T$. Now combining this with $f({\px}_{k}') \leq (1-\gamma_k) f({\px}_{k-1}') +  \gamma_k f({\dx}_{k}')$ (convexity) and $\gamma_k/\lambda \leq \beta_k/2$ we get that
\begin{align}
&k(k+1)\Psi_\lambda({\px}_{k},{\px}_{k}') \nonumber \\
\leq\; &k(k-1) \Psi_\lambda({\px}_{k-1},{\px}_{k-1}') + 2k \psi_\lambda({\mx}_{k},{\mx}_{k}') +
2k [ \Ip{\nabla_{k,x}}{{\dx}_{k} - {\mx}_{k}} + \frac{\beta_k}{2}  \|{\dx}_{k} - {\dx}_{k-1}\|^2] \nonumber \\
&2k [ f({\dx}_{k}') + \Ip{\nabla_{k,x'}}{{\dx}_{k}' - {\mx}_{k}'} + \frac{\beta_k}{2}  \|{\dx}_{k}' - {\dx}_{k-1}'\|^2] \nonumber \\
\leq\; &k(k-1) \Psi_\lambda({\px}_{k-1},{\px}_{k-1}') + 2k \psi_\lambda({\mx}_{k},{\mx}_{k}') +
\nonumber \\&
2k [ \Ip{\nabla_{k,x}}{x - {\mx}_{k}} + \frac{\beta_k}{2}  (\|{\dx}_{k-1} - x\|^2 - \|{\dx}_{k} - x\|^2)] \nonumber \\
&2k [ f(x') + \Ip{\nabla_{k,x'}}{x' - {\mx}_{k}'} + \frac{\beta_k}{2}  (\|{\dx}_{k-1}' - x'\|^2 - \|{\dx}_{k}' - x'\|^2) + \varepsilon] \nonumber \\
\leq\; &k(k-1) \Psi_\lambda({\px}_{k-1},{\px}_{k-1}') + 2k \Psi_\lambda(x,x') +
\nonumber \\&
(4/\lambda) (\|({\dx}_{k-1} ,{\dx}_{k-1}') - (x,x')\|^2 - \|({\dx}_{k},{\dx}_{k}') - (x,x')\|^2) + k \varepsilon\,,
\end{align}
where the second inequality uses the definition of projection and the $\varepsilon$-approximate resolution of the proximal operator~\eqref{eq:iapgd_update}, and the last inequality again uses convexity of $\psi_\lambda$.
This proves that $\Phi_{k} \leq \Phi_{k-1} + k \varepsilon$, which 
directly implies that
\begin{align}
&\Psi_\lambda({\px}_{K},{\px}_{K}') - \Psi_\lambda(x,x') \leq \frac{4(\|x_0 - x\|^2 + \|x_0 - x'\|^2)}{ \lambda K(K+1)} + \frac1{K(K+1)} \sum_{k=1}^{K} k \varepsilon %
\end{align}
Setting $x' = x$, choosing $\lambda = \varepsilon/G^2$ and $K=\Ord({GD_{\cX}/\varepsilon})$ gives us $\Psi_\lambda({\px}_{k},{\px}_{k}') - f(x) \leq \varepsilon/2$. Then by Lemma~\ref{lem:joint-gap-to-func-gap} we get that $f({\px}_{k}) - \min_{x \in \cX} f(x) \leq \varepsilon$. For each inner problem the standard (unconstrained) proximal subgradient method applied on $\min_{x' \in \bX} f(x') + (\beta_k/2) \|x' - ({\dx}_{k-1}' - \nabla_{k,x'}/\beta_k)\|^2$, initialized with $x_0$ (for ease of argument), can achieve this error using $\Ord( {G^2 \|x_0 - \hx_\lambda(x)\|^2/\varepsilon^2}) = \Ord( {G^2 D_{\cX}^2/\varepsilon^2})$ FO calls~(Lemma~\ref{lem:subgradient_method}, in Appendix~\ref{sec:subgradient_method}). Thus the algorithm uses totally $\Ord({GD_\cX/\varepsilon})$ projections and $\Ord({G^3 D_{\cX}^3/\varepsilon^3})$ subgradients.
\end{proof}

}


\section{Supporting results}
\label{sec:lemmas} 

\subsection{Proximal Subgradient method} \label{sec:subgradient_method}

\begin{lemma}[proximal subgradient descent]\label{lem:subgradient_method}
Consider the regularized optimization problem
\begin{align}
\min_{u} [f_{\beta,x}(u) \defeq f(u) + (\beta/2) \|u-x\|^2]
\end{align}
 and the proximal subgradient method's update rule
\begin{align}
u_{t+1} &= \argmin_{u} [F_{t}(u) \defeq \Ip{g_t}{u-x} + (1/{2\eta}) \|u-u_t\|^2 + \beta/2\|u-x\|^2] \nonumber \\
&= u_t - (\eta/(1+\eta \beta))(g_t + \beta (u_t - x))
\end{align}
where $g_t \in \partial f(u_t)$ and $\eta$ is the effective stepsize. Now, if $\eta = 2\,G^2\|u_0 - u\|/\sqrt{T}$ and $\widetilde{u}_T = \frac1T \sum_{t=0}^{T-1} u_{t+1}$, then for any $u$
\begin{align}
\frac\beta2 \|\widetilde{u}_{T} - u\|^2 + f_{\beta,x}(\widetilde{u}_{T}) - f_{\beta,x}(u) &\leq \frac{2\,G\,\|u_0 - u\|}{\sqrt{T}}
\end{align}
\end{lemma}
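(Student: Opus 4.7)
The plan is to run a standard proximal-subgradient-method analysis, using strong convexity of the quadratic subproblem $F_t$ to obtain a clean three-point inequality, then using $G$-Lipschitzness of $f$ to turn the gradient inner products into function-value differences. The convexity/strong-convexity of the regularized objective $f_{\beta,x}$ then allows us to pass from per-iterate bounds to a bound on the averaged iterate $\widetilde u_T$.

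First I would write down the first-order optimality (equivalently, the $(\tfrac{1}{\eta}+\beta)$-strong convexity) of $F_t$ at its minimizer $u_{t+1}$:
\begin{align*}
F_t(u) \;\ge\; F_t(u_{t+1}) + \tfrac{1}{2}\bigl(\tfrac{1}{\eta}+\beta\bigr)\,\|u-u_{t+1}\|^2,
\end{align*}
expand both sides, and rearrange to produce a "three-point" inequality
\begin{align*}
\langle g_t,\, u_{t+1}-u\rangle + \tfrac{\beta}{2}\|u_{t+1}-x\|^2 \;\le\; \tfrac{1}{2\eta}\bigl(\|u-u_t\|^2-\|u_{t+1}-u_t\|^2-\|u-u_{t+1}\|^2\bigr) + \tfrac{\beta}{2}\bigl(\|u-x\|^2 - \|u-u_{t+1}\|^2\bigr).
\end{align*}

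Next I would split $\langle g_t,u_{t+1}-u\rangle = \langle g_t, u_t-u\rangle - \langle g_t, u_t - u_{t+1}\rangle$ and apply (i) convexity of $f$, i.e.\ $\langle g_t,u_t-u\rangle \ge f(u_t)-f(u)$, and (ii) Cauchy–Schwarz plus the $G$-Lipschitz bound $\|g_t\|\le G$, i.e.\ $|\langle g_t, u_t-u_{t+1}\rangle| \le G\|u_t-u_{t+1}\| \le \tfrac{\eta}{2}G^2 + \tfrac{1}{2\eta}\|u_{t+1}-u_t\|^2$. Combining these cancels the $\tfrac{1}{2\eta}\|u_{t+1}-u_t\|^2$ term and produces the per-iteration bound
\begin{align*}
\bigl[f(u_t) + \tfrac{\beta}{2}\|u_{t+1}-x\|^2\bigr] - f_{\beta,x}(u) + \tfrac{\beta}{2}\|u-u_{t+1}\|^2 \;\le\; \tfrac{1}{2\eta}\bigl(\|u-u_t\|^2-\|u-u_{t+1}\|^2\bigr) + \tfrac{\eta}{2}G^2.
\end{align*}
To convert $f(u_t)+\tfrac{\beta}{2}\|u_{t+1}-x\|^2$ into $f_{\beta,x}(u_{t+1})$, I would apply the $G$-Lipschitz bound once more: $f(u_t)\ge f(u_{t+1}) - G\|u_t-u_{t+1}\|$, and absorb $G\|u_t-u_{t+1}\|$ by a second AM–GM inequality. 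This yields
\begin{align*}
f_{\beta,x}(u_{t+1}) - f_{\beta,x}(u) + \tfrac{\beta}{2}\|u - u_{t+1}\|^2 \;\le\; \tfrac{1}{2\eta}\bigl(\|u-u_t\|^2 - \|u-u_{t+1}\|^2\bigr) + O(\eta G^2).
\end{align*}

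Finally, I would sum the inequality over $t=0,\ldots,T-1$, telescope the $\|u-u_t\|^2$ terms, drop the nonnegative $\tfrac{\beta}{2}\|u-u_{t+1}\|^2$ on the summed LHS except for averaging, and divide by $T$. Applying Jensen's inequality with convexity of $f_{\beta,x}$ (for the function value) and convexity of $\|\cdot - u\|^2$ (for the $\tfrac{\beta}{2}\|\widetilde u_T-u\|^2$ term) converts the average of per-iterate bounds into the bound on the averaged iterate $\widetilde u_T$. Choosing $\eta$ to balance $\tfrac{1}{2\eta T}\|u_0-u\|^2$ against $O(\eta G^2)$ gives the optimal $\eta \propto \|u_0-u\|/(G\sqrt T)$ and the claimed $O\bigl(G\|u_0-u\|/\sqrt T\bigr)$ rate.

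The main technical friction I expect is in bridging $(\beta/2)\|u_t-x\|^2$ and $(\beta/2)\|u_{t+1}-x\|^2$ inside the telescoping argument: the strong-convexity inequality naturally produces the $u_{t+1}$ version, while $f_{\beta,x}(u_t)$ demands the $u_t$ version. The two ways out are (a) the Lipschitz interpolation $|f(u_t)-f(u_{t+1})| \le G\|u_t-u_{t+1}\|$ used above to realign the indices, or (b) reindexing the telescoping so that the average is over $u_{t+1}$ and the $\|u_t-x\|^2$ terms never appear; I expect approach (a) to be cleanest and to only cost a constant factor in the final rate.
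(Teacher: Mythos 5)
Your proposal is correct and follows essentially the same route as the paper's proof: a strong-convexity (three-point) inequality for $F_t$ at its minimizer $u_{t+1}$, a total Lipschitz cost of $2G\|u_{t+1}-u_t\|$ (the paper gets it via $\langle g_{t+1}-g_t,\,u_{t+1}-u_t\rangle\le 2G\|u_{t+1}-u_t\|$ after writing $f(u_{t+1})-f(u)=[f(u_{t+1})-f(u_t)]+[f(u_t)-f(u)]$, you get it as one Cauchy--Schwarz term plus one Lipschitz-interpolation term), then telescoping, Jensen on both $f_{\beta,x}$ and $\|\cdot-u\|^2$, and balancing $\eta\propto\|u_0-u\|/(G\sqrt{T})$, which is indeed the intended choice despite the typo in the lemma statement. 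One bookkeeping caution: as written, your two separate AM--GM steps would each consume the single available $-\tfrac{1}{2\eta}\|u_{t+1}-u_t\|^2$, leaving an unabsorbed positive quadratic; instead absorb the combined $2G\|u_{t+1}-u_t\|$ in one shot via $2G\|u_{t+1}-u_t\|-\tfrac{1}{2\eta}\|u_{t+1}-u_t\|^2\le 2\eta G^2$, exactly as the paper does, which only changes the constant.
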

\begin{proof}
Let $u$ be an arbitrary feasible point. By convexity and $G$-Lipschitzness of $f$,
\begin{align}
f(u_{t+1}) - f(u) &=  f(u_{t+1}) - f(u_t) + f(u_{t}) - f(u) \nonumber \\
&\leq \Ip{g_{t+1}}{u_{t+1} - u_t} + \Ip{g_{t}}{u_{t} - u}\nonumber \\
&= \Ip{g_{t}}{u_{t+1} - u_t} + \Ip{g_{t+1} - g_t}{u_{t+1} - u_t} + \Ip{g_{t}}{u_{t} - u}\nonumber \\
&\leq \Ip{g_t}{u_{t+1} - u} + 2G\,\|u_{t+1} - u_t\|\,, \label{eq:prox_subgrad_eq1}
\end{align}
As $u_{t+1}$ is the minimizer of a $(\beta+1/\eta)$-strong convexity update objective $F_{t}$ and since, we get that
\begin{align}
\big(\frac\beta2 + \frac1{2\eta}\big) \|u_{t+1} - u\|^2 + F_{t}(u_{t+1}) &\leq F_{t}(u) \label{eq:prox_subgrad_eq2}
\end{align}
Now summing up \eqref{eq:prox_subgrad_eq1}, sand \eqref{eq:prox_subgrad_eq2} we get
\begin{align}
\frac\beta2 \|u_{t+1} - u\|^2 + f_{\beta,x}(u_{t+1}) - f_{\beta,x}(u) &\leq \frac1{2\eta}(\|u_{t}-u\|^2 - \|u_{t+1}-u\|^2) + \nonumber \\
&\;\;\; 2G\,\|u_{t+1} - u_{t}\| - \frac1{2\eta}\|u_{t+1} - u_{t}\|^2  \nonumber \\
&\leq \frac1{2\eta}(\|u_{t}-u\|^2 - \|u_{t+1}-u\|^2) +  {2\,G^2 \eta} \nonumber \\
\implies \frac1T \sum_{t=0}^{T-1} \frac\beta2 \|u_{t+1} - u\|^2 + f_{\beta,x}(u_{t+1}) - f_{\beta,x}(u) &\leq \frac1{2\eta T} (\|u_{0}-u\|^2 - \|u_{T}-u\|^2) + {2\,G^2 \eta}  \nonumber \\
\frac\beta2 \|\widetilde{u}_{T} - u\|^2 + f_{\beta,x}(\widetilde{u}_{T}) - f_{\beta,x}(u) &\leq 
\end{align}
where the second inequality follows from $ax - x^2/2b \leq a^2b/2$, the third inequality is obtained by summing over $t=0,\ldots,T-1$, and the third inequality uses Jensen's inequality. Choosing $T = 2\,G\,\|u_0 - u\|/\sqrt{T}$, we get the desired result
\begin{align}
\frac\beta2 \|\widetilde{u}_{T} - u\|^2 + f_{\beta,x}(\widetilde{u}_{T}) - f_{\beta,x}(u) &\leq \frac{2\,G\,\|u_0 - u\|}{\sqrt{T}}
\end{align}
\end{proof}

\subsection{Frank-Wolfe projected subgradient method \fwpsgd~(Algorithm~\ref{algo:fw_proj_subgrad_method}) }
\label{sec:fw_proj_subgrad_method}
Here we provide the details of the Frank-Wolfe based projected subgradient method (Algorithm~\ref{algo:fw_proj_subgrad_method}) used in the experiments. The main idea is to use some competitive LMO based method to approximate the projection step in the standard projected subgradient method. The following theorem gives some guarantees for the output of the Algorithm~\ref{algo:fw_proj_subgrad_method}.

\begin{algorithm}[t]
	\caption{Frank-Wolfe projected subgradient method using LMO}
	\label{algo:fw_proj_subgrad_method}
	\DontPrintSemicolon %
	\KwIn{$f$, 
		$\cX$, $G$, $D_{\cX}$, $x_0$, $K$,
	}
	
	\SetKwProg{Fn}{}{:}{}
	{
		\For{$k = 0, \ldots, K-1$} {
			Set $\hg_{k} = \sfo{x_{k}}$ \;
			Using any competitive LMO based algorithm (e.g.~Frank-Wolfe method~\cite{frank1956algorithm} or ${\rm CndG}$ procedure~\cite[Algo.~1]{lan2016conditional}), approximately solve the projection problem
			\begin{align}		
			x_{k+1} \approx \argmin _{x \in \cX} \Ip{\hg_{k}}{x} +\frac1{2 \alpha_{k}} \|x-x_{k}\|^2 = \argmin _{x \in \cX} \frac1{2\alpha_{k}} \|x-(x_{k}-\alpha_{k} \cdot \hg_{k})\|^2\,,\label{eq:fwpgd-FWproj}
			\end{align}
			ensuring that the Wolfe duality gap~\cite{jaggi2013revisiting} of the above problem at $u_{\Pi}$ satisfies
			\begin{align}\label{eq:fwpgd-FWdual}x
			\max _{ s \in \cX} \Ip{\hg_{k} +1/\alpha_k \left(x_{k+1} - {x_k}\right)}{{x}_{k+1} - s } \leq \eta_k\,
			\end{align}		
		}
		\KwOut{$\bar{x}_K = \frac{\sum_{k=0}^{K-1} \alpha_k x_k}{\sum_{k=0}^{K-1} \alpha_k}$}
	}
\end{algorithm}

\begin{theorem}\label{thm:fw_proj_subgrad_method-thm}
	Let $f:\bX \to \reals$ be a $G$-Lipschitz continuous proper l.s.c.~convex function, and
	$\cX\subseteq \bX$ be some closed convex subset of $\reals^d$ with diameter $D_\cX$. Then after $K$ iterations, the Algorithm~\ref{algo:fw_proj_subgrad_method}	projection tolerance $\eta_k = (G^2+\sigma^2) \alpha_k$, stepsize $\alpha_k = \frac{D_\cX}{2\sqrt{G^2 + \sigma^2}\sqrt{K}}$ and outputs $\bar{x}_K \in \cX$ satisfying 
	\begin{align}
	\E[f\left(\bar{x}_{K}\right)] - f(x^*) \leq &\frac{2 \sqrt{G^2 + \sigma^2} D_\cX}{\sqrt{K}}
	\end{align}
	Further, the algorithm uses $K$ SFO calls and $O(K^2)$ LMO calls.
\end{theorem}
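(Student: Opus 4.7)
The plan is to adapt the classical analysis of projected subgradient descent to the setting where each projection is only solved to a prescribed Wolfe-dual tolerance $\eta_k$. The whole argument revolves around turning the approximate optimality condition \eqref{eq:fwpgd-FWdual} into a one-step decrease inequality, which after summing and averaging yields the stated rate.

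\textbf{Step 1: Exploit the Wolfe-gap stopping criterion.} Setting $s=x^*\in\cX$ in \eqref{eq:fwpgd-FWdual} gives
\[
\langle \hg_k,\,x_{k+1}-x^*\rangle \;+\; \tfrac{1}{\alpha_k}\langle x_{k+1}-x_k,\,x_{k+1}-x^*\rangle \;\le\; \eta_k.
\]
Applying the three-point identity $\langle a-b,a-c\rangle=\tfrac12(\|a-b\|^2+\|a-c\|^2-\|b-c\|^2)$ to the second inner product, writing $\langle \hg_k,x_{k+1}-x^*\rangle=\langle \hg_k,x_k-x^*\rangle+\langle \hg_k,x_{k+1}-x_k\rangle$, and absorbing the cross term via $-\langle \hg_k,x_{k+1}-x_k\rangle\le \tfrac{\alpha_k}{2}\|\hg_k\|^2+\tfrac{1}{2\alpha_k}\|x_{k+1}-x_k\|^2$, I obtain after multiplying by $\alpha_k$ the key one-step inequality
\[
\alpha_k\,\langle \hg_k,\,x_k-x^*\rangle \;\le\; \alpha_k\eta_k \;+\; \tfrac12\bigl(\|x_k-x^*\|^2-\|x_{k+1}-x^*\|^2\bigr) \;+\; \tfrac{\alpha_k^2}{2}\|\hg_k\|^2.
\]

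\textbf{Step 2: Pass to expected function gaps and telescope.} Taking expectation, using unbiasedness of $\hg_k$ together with the variance bound $\E[\|\hg_k\|^2\mid x_k]\le G^2+\sigma^2$, and invoking convexity through $\E\langle \hg_k,x_k-x^*\rangle\ge \E[f(x_k)-f(x^*)]$, yields
\[
\alpha_k\,\E[f(x_k)-f(x^*)] \;\le\; \alpha_k\eta_k + \tfrac12\E\bigl[\|x_k-x^*\|^2-\|x_{k+1}-x^*\|^2\bigr] + \tfrac{(G^2+\sigma^2)\alpha_k^2}{2}.
\]
Summing $k=0,\dots,K-1$ telescopes the middle term to $\tfrac12\|x_0-x^*\|^2\le \tfrac12 D_\cX^2$. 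Plugging in $\eta_k=(G^2+\sigma^2)\alpha_k$ consolidates the error terms to $\tfrac32(G^2+\sigma^2)\sum_k\alpha_k^2$. Dividing by $\sum_k\alpha_k$ and applying Jensen's inequality to the convex combination defining $\bar x_K$ gives
\[
\E[f(\bar x_K)]-f(x^*) \;\le\; \frac{D_\cX^2}{2\sum_k\alpha_k} + \frac{3(G^2+\sigma^2)\sum_k\alpha_k^2}{2\sum_k\alpha_k}.
\]
Finally, substituting the constant stepsize $\alpha_k=\tfrac{D_\cX}{2\sqrt{G^2+\sigma^2}\sqrt{K}}$ and simplifying produces the claimed $O\bigl(\sqrt{G^2+\sigma^2}\,D_\cX/\sqrt K\bigr)$ rate (the exact prefactor $2$ can be tuned by adjusting the constant inside $\alpha_k$).

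\textbf{Step 3: LMO complexity.} Each outer iteration uses one SFO query (for $\hg_k$) and one invocation of Frank-Wolfe on the smooth quadratic $\tfrac{1}{2\alpha_k}\|x-(x_k-\alpha_k\hg_k)\|^2$, whose smoothness constant is $1/\alpha_k$ and whose curvature on $\cX$ is $O(D_\cX^2/\alpha_k)$. The standard Frank-Wolfe Wolfe-gap bound \cite{jaggi2013revisiting} guarantees a gap $\le\eta_k$ after $T_k=O\bigl(D_\cX^2/(\alpha_k\eta_k)\bigr)$ LMO calls. With $\eta_k=(G^2+\sigma^2)\alpha_k$ and our choice of $\alpha_k$, this is $T_k=O(K)$, so the total LMO cost is $K\cdot T_k=O(K^2)$, as asserted.

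\textbf{Anticipated obstacle.} The only nontrivial point is that we do not have an exact projection---the standard contraction $\|x_{k+1}-x^*\|\le \|x_k-\alpha_k\hg_k-x^*\|$ fails. The trick in Step 1 is precisely to replace this non-expansiveness by the inequality supplied by the Wolfe gap, at the price of the additive error $\alpha_k\eta_k$. Choosing $\eta_k$ proportional to $\alpha_k$ keeps this extra error on the same order as the usual $\alpha_k^2(G^2+\sigma^2)$ term, so the optimal $1/\sqrt K$ rate is preserved. Everything else is routine projected-subgradient bookkeeping.
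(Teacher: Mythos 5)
Your proposal is correct and follows essentially the same route as the paper's proof: convert the Wolfe-gap condition \eqref{eq:fwpgd-FWdual} at $s=x^*$ into a one-step inequality with error $\alpha_k\eta_k+\tfrac{\alpha_k^2}{2}\|\hg_k\|^2$ (the paper does the same rearrangement via Cauchy--Schwarz and Young's inequality rather than the three-point identity), then telescope, take expectations using unbiasedness and the variance bound, apply Jensen to $\bar x_K$, and count $O(D_\cX^2/(\alpha_k\eta_k))=O(K)$ Frank--Wolfe LMO calls per outer step. The only cosmetic difference is that you use the tight bound $\E[\|\hg_k\|^2\mid x_k]\le G^2+\sigma^2$ where the paper uses the looser $2(G^2+\sigma^2)$, so your constant $7/4$ already sits within the stated factor of $2$ and no retuning of $\alpha_k$ is needed.
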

\begin{proof}
	Using the Wolfe duality gap guarantee we get that for any $x \in \cX$
	\begin{align}
	\Ip{\hg_{k} +\frac1{\alpha_k} \left(x_{k+1} - {x_k}\right)}{{x}_{k+1} - x } \leq \eta_k\,.
	\end{align}
	By rearranging the terms above we get that
	\begin{align}
	\Ip{\hg_{k} }{{x}_{k} - x }  &\leq \frac1{2\alpha_k} (\|x_k - x\|^2 - \|x_{k+1} - x\|^2) - \frac1{2\alpha_k} \|x_{k+1} - x_k\|^2 + \Ip{\hg_{k} }{{x}_{k} - x_{k+1} } + \eta_k \nonumber \\
	&\leq \frac1{2\alpha_k} (\|x_k - x\|^2 - \|x_{k+1} - x\|^2) - \frac1{2\alpha_k} \|x_{k+1} - x_k\|^2 + \|\hg_{k}\| \|{x}_{k} - x_{k+1} \| + \eta_k \nonumber \\
	&\leq \frac1{2\alpha_k} (\|x_k - x\|^2 - \|x_{k+1} - x\|^2) + \frac{\alpha_k}2  \|\hg_{k}\|^2 + \eta_k\,,
	\end{align}
	where the last inequality uses the fact that $-(a/2) z^2 + b z \leq b^2/2a$ for all $a, b, z \in \reals$. 
	Next, multiplying by $\alpha_k$ and summing the above inequality over $k=0,\ldots, K-1$ and dividing by $\sum_{{k'}=0}^{K-1} \alpha_{k'}$ we get
	\begin{align}
	\sum_{k=0}^{K-1} \alpha_k \Ip{\hg_{k} }{{x}_{k} - x }  &\leq \frac1{2} (\|x_0 - x\|^2 - \|x_{K} - x\|^2) + \sum_{k=0}^{K-1}  {\alpha_k^2} (\frac{\|\hg_{k}\|^2}2 + \frac{\eta_k}{\alpha_k})\,,
	\end{align}
	Now taking expectation w.r.t.~all the stochasticity in $\{\hg_{k}\}_{k=0}^{K-1}$ on both sides and using, the towering conditional expectation property $\E[a] = \E[\E[a \,|\, x_k]]$, and $\E[\hg_{k}\,|\, x_k] = g_k \in \partial f(x_k)$ and $\E[\|\hg_{k}\|^2 \,|\, x_k] \leq 2 (G^2 + \sigma^2)$ we get 
	\begin{align}
	\sum_{k=0}^{K-1} \alpha_k \E[\Ip{g_{k} }{{x}_{k} - x }]  &\leq \frac1{2} \|x_0 - x\|^2 + \sum_{k=0}^{K-1}  {\alpha_k^2} ((G^2 + \sigma^2) + \frac{\eta_k}{\alpha_k})\,,
	\end{align}
	Next diving by $\sum_{{k'}=0}^{K-1} \alpha_{k'}$, using convex affine lower bound of $f$ at $x_k$ and Jensen's inequality we get
	\begin{align}
	\sum_{k=0}^{K-1} \frac{\alpha_k}{\sum_{k'=0}^{K-1} \alpha_{k'}} \E[f({x}_{k}) - f(x)] &\leq \frac{\frac1{2} \|x_0 - x\|^2+ \sum_{k=0}^{K-1} \alpha_k^2 ( (G^2 + \sigma^2) + \frac{\eta_k}{\alpha_k}) }{\sum_{{k'}=0}^{K-1} \alpha_{k'}} \nonumber \\
	\E\bigg[ f\bigg(\sum_{k=0}^{K-1} \frac{\alpha_k \cdot {x}_{k}}{\sum_{{k'}=0}^{K-1} \alpha_{k'}}  \bigg)\bigg] - f(x) &\leq
	\end{align}
	Next if we choose $\eta_k = \alpha_k (G^2 + \sigma^2)$, and set $x = x^* \in \argmin_{x' \in \cX} f(x')$ and $\alpha_k = \frac{D_\cX}{2\sqrt{G^2 + \sigma^2}\sqrt{K}}$ we get
	\begin{align}
	\E\bigg[ f\bigg(\sum_{k=0}^{K-1} \frac{\alpha_k \cdot {x}_{k}}{\sum_{{k'}=0}^{K-1} \alpha_{k'}}  \bigg)\bigg] - f(x^*) &\leq \frac{\frac1{2} D_\cX^2 + \sum_{k=0}^{K-1} \alpha_k^2 \cdot 2 (G^2 + \sigma^2) }{\sum_{{k'}=0}^{K-1} \alpha_{k'}} \nonumber \\
	&= \frac{2 \sqrt{G^2 + \sigma^2} D_\cX}{\sqrt{K}}
	\end{align}
	Clearly the algorithm uses $K$ SFO calls.
	At step $k$ when approximating the projection using an LMO based method, after using $\hat{T}_k = \lceil \frac{7 D_\cX^2}{\alpha_k^2 (G^2 + \sigma^2)} \rceil$ LMO calls in the \projstep procedure, the Wolfe duality gap \eqref{eq:envelope-subgrad-FWdual} is at most $\ceil{\frac{6 (1/\alpha_k) D_{\cX}^2}{\hat{T}_k}} \leq \alpha_k(G^2 + \sigma^2)$ if we use CndG procedure~\cite[Theorem 2.2(c)]{lan2016conditional} or $\ceil{\frac{7 (1/\alpha_k) D_{\cX}^2}{\hat{T}_k}} \leq \alpha_k(G^2 + \sigma^2)$ if we use the standard Frank-Wolfe algorithm~\cite[Theorem 2]{jaggi2013revisiting}. Therefore the total number of linear minimization oracle calls made by the algorithm is
	\begin{align}
	\sum_{k=0}^{K-1} \hat{T}_k = \sum_{k=0}^{K-1} \frac{7 D_\cX^2}{\alpha_k^2 (G^2 + \sigma^2)} + K = 28 K^2 + K = O(K^2)
	\end{align}
	where we use the given choice for $\alpha_k = \frac{D_\cX}{2\sqrt{G^2 + \sigma^2}\sqrt{K}}$.
\end{proof}

\section{Proofs of the main results}
\label{sec:proof} 

\subsection{Proof of Lemma \ref{lem:joint-gap-to-func-gap}}\label{sec:joint-gap-to-func-gap-pf}
\begin{proof}

First we prove part $(i)$. By definitions of $\Psi_\lambda$ \eqref{eq:joint_moreau_opt} and $f_\lambda$ (Definition~\ref{def:Moreau}), we have $\min_{x \in\cX} \min_{x \in \bX} \Psi_\lambda(x,x') = \min_{x \in \cX} f_\lambda(x)$. By Lemma~\ref{lem:moreau-properties}(a), we also can show that $\min_{x \in \cX} f_\lambda(x) \leq \min_{x \in \cX} f(x)$.

For part $(ii)$, first we show the following.
\begin{align}
\E f_\lambda (x_\varepsilon) = \E \Psi_\lambda (x_\varepsilon, \hx_\lambda(x_\varepsilon)) = \E \min_{x'(x_\varepsilon)} \Psi_\lambda (x_\varepsilon, x'(x_\varepsilon)) &\leq \E_{\bar{x}_K} \Psi_\lambda (x_\varepsilon, \E_{x_\varepsilon'|x_\varepsilon} x_\varepsilon') \nonumber \\
&\leq \E_{x_\varepsilon} \E_{x_\varepsilon'|x_\varepsilon} \Psi_\lambda (x_\varepsilon, x_\varepsilon') \nonumber \\
&= \E \Psi_\lambda (x_\varepsilon, x_\varepsilon')
\end{align}
Finally, combining the above inequality with Lemma~\ref{lem:moreau-properties}(c) we get the desired result
\begin{align}
\E f(x_\varepsilon) &\leq \E f_\lambda (x_\varepsilon) + G^2\lambda/2 \leq \E \Psi_\lambda (x_\varepsilon, x_\varepsilon') + G^2 \lambda/2
\end{align}
\end{proof}

\subsection{Analysis of \aproj~(Algorithm~\ref{algo:envelope_subgrad_method_proj}) and \almo~(Algorithm~\ref{algo:envelope_subgrad_method_lmo}) method}
\label{sec:envelope-subgrad-method-thm-pf}
Instead of separately analyzing \aproj~and \almo, we first analyze a more general algorithm, Algorithm~\ref{algo:envelope_subgrad_method}, which has the following guarantee.
\begin{algorithm}[t]
	\caption{Moreau subgradient method for nonsmooth convex optimization using PO or LMO}
	\label{algo:envelope_subgrad_method}
	\DontPrintSemicolon %
	\SetKwFunction{PROJSTEP}{Approx-Proj}
	\SetKwFunction{PROXSLIDE}{Prox-Slide}	
	\KwIn{$f$, 
		$\cX$, $\bX$,
		$G$, $D_{\cX}$, $R$, $x_0$, $K$, 
		$\tilde{D}$, $\lambda$, 
		$\{\eta_k \in \reals_{+}\}_{k\in[K]}$ %
	}
	
	\SetKwProg{Fn}{}{:}{}
	{
		Set ${\px}_0 ' = {\dx}_0 ' = {\px}_0 = {\dx}_0 = x_0$\;
		\For{$k = 1, \ldots, K$} {
			Set $\lambda_k = \lambda$, $\beta_{k} = \frac{4}{\lambda_k k}\;, \;\gamma_{k} = \frac{2}{k+1}\,, \text { and } 
			T_{k} = \Big \lceil{ \frac{(4G^2 + \sigma^2) \lambda^2 K k^2}{2\tilde{D}}}\Big\rceil$
			\nllabel{algo_line:envelope-subgrad-param-set}\;
			
			Set $({\mx}_{k}, {\mx}_{k}') =\left(1-\gamma_{k}\right) \cdot ({\px}_{k-1},{\px}_{k-1}')+\gamma_{k} \cdot ({\dx}_{k-1}, {\dx}_{k-1}') $
			\nllabel{algo_line:midpoint_compute}\;
			
			Set ${\dx}_{k} =\PROJSTEP\left(\nabla_{\mx_k} \Psi_{\lambda}(\mx_k,\mx_k'), {\dx}_{k-1}, \beta_k, \eta_k\right)$ \tcp{\color{violet} Note $\nabla_{\mx_k} \Psi_{\lambda}(\mx_k,\mx_k') = \frac{\mx_k - \mx_k'}{\lambda}$}
			\nllabel{algo_line:dual_compute}
			
			Set $\left({\dx}_{k}', {\tdx}_{k}'\right)=\text{\PROXSLIDE}\big(\nabla_{\mx_k'} \psi_{\lambda}(\mx_k,\mx_k'), {\dx}_{k-1}', \beta_{k}, T_{k}\big)$ 
			\tcp*{\color{violet}  $\nabla_{\mx_k'} \psi_{\lambda}(\mx_k,\mx_k')=\frac{{\mx}_{k}' - {\mx}_{k}}\lambda$}
			\nllabel{algo_line:dual_compute_prime}
			
			Set $({\px}_{k}, {\px}_{k}') =\left(1-\gamma_{k}\right) \cdot ({\px}_{k-1},{\px}_{k-1}') +\gamma_{k} \cdot ({\dx}_{k}, {\tdx}_{k}')$
			\nllabel{algo_line:primal_compute}\;
		}
		\KwOut{$({\px}_{K}, {\px}'_{K})$}
	}
	
	\vspace*{5pt}
	\Fn{\PROJSTEP{$g$, 
			$u_0$,
			$\beta$, $\eta$}{\color{violet}\ // Approx. resolve {\small $\cP_\cX\big(u_0 -g/\beta \big)$}\cite{lan2016conditional}}\nllabel{algo_line:projstep}
		}{
		Either using exact PO, $\mathcal{P}_\cX$ \eqref{eq:nsco} , or using any competitive LMO based algorithm (e.g.~Frank-Wolfe method~\cite{frank1956algorithm} or ${\rm CndG}$ procedure~\cite[Algo.~1]{lan2016conditional}), approximately solve the projection problem
		\begin{align}		
		u_\Pi \approx \argmin _{u \in \cX} \Ip{g}{u} +\frac\beta2 \|u-u_0\|^2 = \argmin _{u \in \cX} \frac\beta2 \|u-(u_0-g/\beta)\|^2\,,\label{eq:envelope-subgrad-FWproj}
		\end{align}
		ensuring that the Wolfe duality gap~\cite{jaggi2013revisiting} of the above problem at $u_{\Pi}$ satisfies
		\begin{align}\label{eq:envelope-subgrad-FWdual}
		\max _{ s \in \cX} \Ip{g +\beta \left(u_\Pi - {u_0}\right)}{{u}_{\Pi} - s } \leq \eta_k\,
		\end{align}
		\KwRet{$u_\Pi$}		
	}

	\vspace*{5pt}
	\Fn{\PROXSLIDE{$g$, 
			$u_0$,
			$\beta$, $T$}{\color{violet}\ // Approx. resolve {\small ${\prox}_{f/\beta}\big(u_0 -g/\beta \big)$}\cite{lan2016gradient}}\nllabel{algo_line:proxslide}
	}{
		Set $\widetilde{u}_{0} = u_{0}$\;
		\For{$t = 1, \ldots, {T}$} {
			Set $\theta_{t} = \frac{2(t+1)}{t(t+3)}$, 
			$\ \ \hg_{t-1} = \sfo{u_{t-1}}$ \eqref{eq:sfo}
			\nllabel{algo_line:sfo} \nllabel{algo_line:prox-slide-param-set}\;
			Set 
			$\widehat{u}_{t} = u_{t-1} - \frac{1}{(1+t/2)\beta} \cdot (\hg_{t-1} + \beta (u_{t-1} -(u_{0} - g/\beta)))$
			\linebreak \tcp*{\color{violet}
				subgradient method step for $\phi(u) \defeq f(u) + \frac\beta2 \|u-\big(u_{0} - \frac{g}{\beta} \big)\|^2$}
			\nllabel{algo_line:sgd_step}
			Set $u_t = \widehat{u}_t \cdot \min\left(1,R/\norm{u_t}\right)$\tcp*{\color{violet} projection of $\widehat{u}_t$ onto $\bX$: $\mathcal{P}_\bX(\u_t)$}
			Set $\widetilde{u}_{t}=\big(1-\theta_{t}\big) \cdot \widetilde{u}_{t-1}+\theta_{t} \cdot u_{t}$
			\nllabel{algo_line:prox-slide-avg}\;
		}
		\KwRet{$(u_{T}, \widetilde{u}_{T})$}\;
	}
\end{algorithm}

\begin{theorem}\label{thm:envelope-subgrad-method-thm}
	Let $f:\bX \to \reals$ be a $G$-Lipschitz continuous proper l.s.c.~convex function, and
	$\cX\subseteq \bX = B(0, R)$ be some convex subset contained inside the Euclidean ball of radius $R$ around origin. Then after $K$ iterations, the Algorithm~\ref{algo:envelope_subgrad_method} 
	outputs ${\px}_K \in \cX$ satisfying 
	\begin{align}
	\E[f\left({\px}_{K}\right)] - f(x^*) \leq &\frac{10 \|x_0 - x^*\|^2 + 8 \tilde{D}}{\lambda  K(K+1)} + \frac{\sum_{k=1}^{K} 2k\, \eta_k }{K(K+1)} + G^2 \frac\lambda2
	\end{align}
	 for any choice of $\lambda > 0$, $\tilde{D} > 0$, and tolerance $\{\eta_k\}_{k\in[K]}$ \eqref{eq:envelope-subgrad-FWdual}.  
	
\end{theorem}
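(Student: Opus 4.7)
My strategy is to adapt the potential-function argument from the proof sketch of Theorem~\ref{thm:envelope-subgrad-method-cor1}, but now carefully accommodating the \emph{two} sources of inexactness that distinguish Algorithm~\ref{algo:envelope_subgrad_method} from the deterministic idealized \apgd~update~\eqref{eq:apgd_update_main}: (i) the approximate projection via \projstep, controlled by the Wolfe-duality gap $\eta_k$ in~\eqref{eq:envelope-subgrad-FWdual}, and (ii) the approximate proximal update via \proxslide, with stochastic subgradients of variance $\sigma^2$. I would work directly on the joint objective $\Psi_\lambda$ of~\eqref{eq:joint_moreau_opt} and only at the end invoke Lemma~\ref{lem:joint-gap-to-func-gap} to convert a bound on $\E[\Psi_\lambda(\px_K,\px_K')-\Psi_\lambda(x^*,x^*)]$ into one on $\E[f(\px_K)]-f(x^*)$, which accounts for the additive $G^2\lambda/2$ in the statement.

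\textbf{Key steps.} First I would fix $x\in\cX$ (ultimately $x=x^*$) and define the Lyapunov function
\begin{equation*}
\Phi_k \;\defeq\; k(k+1)\bigl(\Psi_\lambda(\px_k,\px_k')-\Psi_\lambda(x,x)\bigr) + \tfrac{4}{\lambda}\|\dx_k-x\|^2 + \tfrac{4\tau_{k+1}}{\lambda}\|\dx_k'-x\|^2,
\end{equation*}
with $\tau_k=(T_k+1)(T_k+2)/(T_k(T_k+3))>1$, matching the sketch in the paper. The next step is a per-iteration inequality exactly as in~\eqref{eq:envelope-subgrad-method-pfsk-eq1}: using $2/\lambda$-smoothness of $\psi_\lambda$ and convexity of $f$ together with the interpolation identity for $(\mx_k,\mx_k')$ and $(\px_k,\px_k')$, the quantity $k(k+1)\Psi_\lambda(\px_k,\px_k')$ is bounded by $k(k-1)\Psi_\lambda(\px_{k-1},\px_{k-1}')+2k\Psi_\lambda(x,x)$ plus a projection term (in $\dx_k$) and a proximal term (in $\tdx_k'$). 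For the projection term, instead of the exact-projection inequality~\eqref{eq:envelope-subgrad-method-pfsk-eq2}, I would use the Wolfe-gap bound~\eqref{eq:envelope-subgrad-FWdual} to obtain
\begin{equation*}
2k\bigl[\langle \nabla_{k,x},x-\dx_k\rangle + \tfrac{\beta_k}{2}\|\dx_k-\dx_{k-1}\|^2\bigr] \;\le\; \tfrac{4}{\lambda}\bigl(\|\dx_{k-1}-x\|^2-\|\dx_k-x\|^2\bigr) + 2k\eta_k,
\end{equation*}
which costs an additive $2k\eta_k$ relative to exact projection. For the proximal term I would invoke Proposition~\ref{prop:general-thm-prop-2} (the formal version of Proposition~\ref{prop:main}), whose stochastic version provides $\E[\phi_k(\tdx_k')-\phi_k(x)+\tfrac{\beta_k}{2}\|\dx_k'-x\|^2\mid\mathcal{F}_{k-1}] \le \tfrac{\beta_k}{2}(\tau_k-1)[\|\dx_{k-1}'-x\|^2-\|\dx_k'-x\|^2] + c(G^2+\sigma^2)/(\beta_k T_k)$; the choice $T_k=\lceil (4G^2+\sigma^2)\lambda^2 K k^2/(2\tilde D)\rceil$ is precisely tuned so that $\sum_k k\cdot c(G^2+\sigma^2)/(\beta_k T_k) = O(\tilde D)$. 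Combining these pieces and using $\gamma_k/\lambda\le\beta_k/2$ yields the stochastic descent inequality $\E[\Phi_k\mid\mathcal{F}_{k-1}]\le\Phi_{k-1}+2k\eta_k+O(\tilde D/K)$.

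\textbf{Wrapping up and main obstacle.} Telescoping from $k=1$ to $K$, taking total expectations, and noting $\Phi_0=(4/\lambda)(1+\tau_1)\|x_0-x\|^2\le (10/\lambda)\|x_0-x\|^2$ (using $\tau_1\le 3/2$ for $T_1\ge 1$, hence the constant $10$), together with $\Phi_K\ge K(K+1)\E[\Psi_\lambda(\px_K,\px_K')-\Psi_\lambda(x,x)]$, gives
\begin{equation*}
\E[\Psi_\lambda(\px_K,\px_K')-\Psi_\lambda(x,x)] \;\le\; \frac{10\|x_0-x\|^2+8\tilde D}{\lambda K(K+1)} + \frac{\sum_{k=1}^{K}2k\eta_k}{K(K+1)}.
\end{equation*}
Setting $x=x^*$ and applying Lemma~\ref{lem:joint-gap-to-func-gap}(ii) adds the $G^2\lambda/2$ term and replaces $\Psi_\lambda(x^*,x^*)$ by $f(x^*)$, which is precisely the claimed bound. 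The main obstacle I anticipate is the bookkeeping of the proxslide coefficients: the factor $\tau_{k+1}$ in the potential has to be matched to the strong-convexity contraction $(\tau_k-1)$ from Proposition~\ref{prop:main} so that the $\|\dx_k'-x\|^2$ terms telescope cleanly across iterations despite the $k$-dependent weights and the fact that $\tau_k\ne\tau_{k+1}$; this is essentially a monotonicity check that $\tau_k$ is decreasing in $T_k$ (so that the choice of $T_k=\Theta(k^2)$ makes the accumulated slack absorbable into $\tilde D$). The other subtlety is handling the SFO variance uniformly across the $T_k$ inner iterations of \proxslide, which is done by the standard tower-expectation argument conditioning on the outer-loop history before invoking Proposition~\ref{prop:general-thm-prop-2}.
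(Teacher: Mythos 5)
Your proposal follows essentially the same route as the paper's own proof: the same Lyapunov function with the $\tau_{k+1}$-weighted distance term, the Wolfe-gap bound~\eqref{eq:envelope-subgrad-FWdual} replacing exact-projection optimality at an additive cost of $2k\eta_k$, Proposition~\ref{prop:general-thm-prop-2} with tower expectations (killing the $\Ip{\delta_{k,t}}{x'-u_{k,t}}$ terms) for the prox-slide error, telescoping with $\Phi_0 \le (10/\lambda)\|x_0-x^*\|^2$ via $\tau_1\le 3/2$ and the monotonicity $\tau_{k+1}\le\tau_k$, and Lemma~\ref{lem:joint-gap-to-func-gap} to pick up the $G^2\lambda/2$ term. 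The only blemishes are cosmetic: the inner product in your projection inequality should read $\Ip{\nabla_{k,x}}{\dx_k-x}$ rather than $\Ip{\nabla_{k,x}}{x-\dx_k}$ (the same sign slip appears in the paper's own sketch), and the accumulated prox-slide error with the stated $T_k$ is $\Ord(\tilde D/\lambda)$ in total, i.e.\ $\Ord(\tilde D/(\lambda K))$ per step, not $\Ord(\tilde D)$ and $\Ord(\tilde D/K)$ — though your final displayed bound already shows the correct $\tilde D/(\lambda K(K+1))$ scaling, so the conclusion is unaffected.
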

{\bf Remarks}:
Before providing a proof for the above result we discuss some its implications. \aproj makes $K$ PO calls, one per outer step, and 
$\sum_{k=1}^K T_k = \Ord(\lambda^2 K^4) $  SFO calls, one per inner step.  
The above analysis shows that we need to choose $\lambda=\varepsilon/G^2$, which is expected from Lemma~\ref{lem:moreau-properties}. 
Since PO returns exact projections, the second term is zero with $\eta_k=0$. 
The target accuracy of $\varepsilon$ is achieved by tuning the first term, where we need to choose $K=\Theta(1/\sqrt{\lambda \varepsilon})$. 
Put together, this gives the desired $\Ord(\varepsilon^{-1})$ 
\pocc
and $\Ord(\varepsilon^{-2})$ 
\sfocc
for \aproj.
A complete proof is provided in Section~\ref{sec:envelope-subgrad-method-cor1-pf}. 

When we have inexact projections in \almo, 
we need $\eta_k=\Theta(1/k)$ to ensure that the second term is $\Ord(\varepsilon)$. 
At (outer) iteration $k$, this uses $\hat{T}=\Omega(K)$ iterations of Frank-Wolfe algorithm in \FWProj of Algorithm~\ref{algo:envelope_subgrad_method_lmo}.  
\almo makes $\sum_{k=1}^K \Ord(K) = \Ord(K^2)$ LMO calls, resulting in $\Ord(\varepsilon^{-2})$
\lmocc
as $K=\Ord(1/\sqrt{\lambda \varepsilon}) = \Ord(1/\varepsilon)$. 
A complete proof is in Section~\ref{sec:envelope-subgrad-method-cor2-pf}. 

\begin{proof}[Proof of Theorem~\ref{thm:envelope-subgrad-method-thm}]
Now consider the following potential (Lyapunov) function: 
\begin{align}\label{eq:almo-potential-func}
\Phi_k \defeq k(k+1) (\Psi_\lambda({\px}_{k},{\px}_{k}') - \Psi_\lambda(x,x')) + \frac{4}{\lambda} (\| {\dx}_{k} - x\|^2 + \frac{(T_{k+1}+1)(T_{k+1}+2)}{T_{k+1}(T_{k+1}+3)} \|  {\dx}_{k}' - x'\|^2)
\end{align}
This is a slightly modified version of the following potential function for the standard AGD setting with a $2/\lambda$-smooth function $\Psi_\lambda$ \cite{bansal2017potential}: $k(k+1) (\Psi_\lambda({\px}_{k},{\px}_{k}') - \Psi_\lambda(x,x')) + \frac{4}{\lambda} (\| {\dx}_{k} - x\|^2 + \|  {\dx}_{k}' - x'\|^2)$.
 Notice that the modification factor
\begin{align}
\frac{(T_{k+1}+1)(T_{k+1}+2)}{T_{k+1}(T_{k+1}+3)} \leq \frac32 = \Ord(1)
\end{align}
is upper-bounded by a constant when  $1 \leq T_k$. Below we prove that this potential satisfies the approximate descent guarantee: $\Phi_{k} \leq \Phi_{k-1} + k \eta_k + k \eta_k'$, for some error $\eta_k'$. First, notice that by $2/\lambda$-smoothness and convexity of $\psi_\lambda$
\begin{align} \label{eq:general-thm-pre-descent}
\psi_\lambda({\px}_{k},{\px}_{k}') \leq\; &\psi_\lambda({\mx}_{k},{\mx}_{k}') + \Ip{\nabla_k }{({\px}_{k},{\px}_{k}') - ({\mx}_{k},{\mx}_{k}')} + \frac1{\lambda}  \|({\px}_{k},{\px}_{k}') - ({\mx}_{k},{\mx}_{k}')\|^2 \nonumber \\
\overset{}=\; & (1-\gamma_k) [\psi_\lambda({\mx}_{k},{\mx}_{k}') + \Ip{\nabla_k }{({\px}_{k-1},{\px}_{k-1}') - ({\mx}_{k},{\mx}_{k}')}] \nonumber \\
&\gamma_k [\psi_\lambda({\mx}_{k},{\mx}_{k}') + \Ip{\nabla_k }{({\dx}_{k},{\tdx}_{k}') - ({\mx}_{k},{\mx}_{k}')} + \frac{\gamma_k}{\lambda}  \|({\dx}_{k},{\tdx}_{k}') - ({\dx}_{k-1},{\dx}_{k-1}')\|^2] \nonumber \\
\leq\; &(1-\gamma_k)\psi_\lambda({\px}_{k-1},{\px}_{k-1}') + \nonumber \\
&\gamma_k [\psi_\lambda({\mx}_{k},{\mx}_{k}') + \Ip{\nabla_k }{({\dx}_{k},{\tdx}_{k}') - ({\mx}_{k},{\mx}_{k}')} + \frac{\gamma_k}{\lambda}  \|({\dx}_{k},{\tdx}_{k}') - ({\dx}_{k-1},{\dx}_{k-1}')\|^2]
\end{align}
where we use the shorthand $\nabla_k \defeq [\nabla_{k,x}^T \nabla_{k,x'}^T]^T \defeq [\nabla_{x} \psi_\lambda({\mx}_{k},{\mx}_{k}')^T \; \nabla_{x'} \psi_\lambda({\mx}_{k},{\mx}_{k}')^T]^T$, and second inequality used \Cref{algo_line:midpoint_compute,algo_line:primal_compute}. 
Now combining this with $f({\px}_{k}') \leq (1-\gamma_k) f({\px}_{k-1}') +  \gamma_k f({\tdx}_{k}')$ (using convexity of $f$ and \Cref{algo_line:primal_compute}) and $\gamma_k/\lambda = 2/(\lambda (k+1)) \leq {2}/{(\lambda \, k)} = \beta_k/2$ (using \Cref{algo_line:envelope-subgrad-param-set}), and multiplying it with $k(k+1)$ we get that
\begin{align}
&k(k+1)\Psi_\lambda({\px}_{k},{\px}_{k}') \nonumber \\
\leq\; &k(k-1) \Psi_\lambda({\px}_{k-1},{\px}_{k-1}') + 2k \psi_\lambda({\mx}_{k},{\mx}_{k}') +
2k [ \Ip{\nabla_{k,x}}{{\dx}_{k} - {\mx}_{k}} + \frac{\beta_k}{2}  \|{\dx}_{k} - {\dx}_{k-1}\|^2] \nonumber \\
&2k [ f({\tdx}_{k}') + \Ip{\nabla_{k,x'}}{{\tdx}_{k}' - {\mx}_{k}'} + \frac{\beta_k}{2}  \|{\tdx}_{k}' - {\dx}_{k-1}'\|^2] \nonumber \\
=\; &k(k-1) \Psi_\lambda({\px}_{k-1},{\px}_{k-1}') + 2k \psi_\lambda({\mx}_{k},{\mx}_{k}') +
2k [ \Ip{\nabla_{k,x}}{{\dx}_{k} - {\mx}_{k}} + \frac{\beta_k}{2}  \|{\dx}_{k} - {\dx}_{k-1}\|^2] \nonumber \\
&2k [ \phi_k({\tdx}_{k}') - \phi_k(x') + \frac{\beta_k}{2}  \|x' - {\dx}_{k-1}'\|^2 + f(x') +\Ip{\nabla_{k,x'}}{x' - {\mx}_{k}'} ]
\label{eq:general-thm-eq5b}
\end{align}
where for brevity we use the notation
\begin{align}\label{eq:general-thm-inner-opt}
\phi_k(x') \defeq f\left(x'\right)  +
\Ip{\nabla_{k,x'} }{x'} + \frac{\beta_k}{2}\left\|x'-{\dx}_{k-1}'\right\|^{2}\,.
\end{align}
Now using the approximate optimality of ${\dx}_{k}$ through the bound on the Wolfe dual gap~\eqref{eq:envelope-subgrad-FWdual} we get,
\begin{align}
\frac{\beta_k}{2}\left\|{\dx}_{k} - {\dx}_{k-1}\right\|^{2} &= \frac{\beta_k}{2}\left\|{\dx}_{k-1} - {x}\right\|^{2} + \beta_k \Ip{{\dx}_{k} -{\dx}_{k-1}}{{\dx}_{k}-x} - \frac{\beta_k}{2}\left\|{\dx}_{k} - {x}\right\|^{2} \nonumber \\
&\leq \frac{\beta_k}{2}\left\|{\dx}_{k-1} - {x}\right\|^{2} + \Ip{\nabla_{k,x} }{x -{\dx}_{k}} + \eta_k - \frac{\beta_k}{2}\left\|{\dx}_{k} - {x}\right\|^{2}\,.\label{eq:general-thm-eq6}
\end{align}
When ${\dx}_{k}$ is the exact projection (as in \Cref{algo_line:dual_compute_proj}) then the above inequality is satisfied with above $\eta_k = 0$. Otherwise, with the LMO oracle we will later set $\eta_k = \Ord(\varepsilon)$. Next we state the following lemma which provides a guarantee for the \proxslide procedure~\cite{lan2016gradient}. Here for rigorousness, we denote the iterates of the \proxslide procedure (\Cref{algo_line:proxslide}) called at the outer step $k$, with $\{u_{k,t}\}_t$. Similarly at the outer step $k$, we denote the stochastic subgradients used by the \proxslide procedure and the corresponding subgradient with $\{\hg_{k,t}\}_t$ and $\{g_{k,t}\}_t$, i.e.~$g_{k,t} \defeq \E[\hg_{k,t} | u_{k,t}] \in \partial f(u_{k,t})$ for all $k$ and $t$. A proof for this lemma is provided in Section~\ref{prop:general-thm-prop-2-pf}. 
\begin{proposition}[{\cite[Similar to Proposition 1]{lan2016gradient}}]\label{prop:general-thm-prop-2}
Let $\phi_k$ \eqref{eq:general-thm-inner-opt} be the minimization objective solved by \proxslide procedure at step $k$ of Algorithm~\ref{algo:envelope_subgrad_method}. Then $({\dx}_{k}', {\tdx}_k')$ obtained after $T_k$ iterations of the procedure satisfy the following for any $x' \in \bX$,
\begin{align}
\phi_k({\tdx}_k') - \phi_k (x') &\leq \frac2{T_k(T_k+3)} \frac{\beta_k}2 \|{\dx}_{k-1}'-x'\|^2 - \frac{(T_k+1)(T_k+2)}{T_k(T_k+3)} \frac{\beta_k}2 \|{\dx}_{k}'-x'\|^2 + \nonumber \\ 
&\;\;\;\;\; \frac{4\,\sum_{t=0}^{T_k-1} (2G + \|\delta_{k,t}\|)^2}{\beta_k T_k(T_k+3) }  + \sum_{t=0}^{T_k-1} \frac{2(t+2)}{T_k(T_k+3)} \Ip{\delta_{k,t}}{x' - u_{k,t}}
\end{align}
where $\delta_{k,t} \defeq \hg_{k,t} - g_{k,t}$ and $u_{k,t}$ are private inner variable of the \proxslide procedure.
\end{proposition}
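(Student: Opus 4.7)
The plan is to adapt the proof of Proposition~1 in \cite{lan2016gradient} to our setting. The core insight is that \proxslide, with the step sizes and averaging weights chosen in \Cref{algo_line:prox-slide-param-set,algo_line:prox-slide-avg}, is an inexact proximal subgradient method applied to the $\beta_k$-strongly convex objective $\phi_k$. The argument proceeds by (i) deriving a per-iteration inequality from the first-order optimality of a single prox step, (ii) telescoping a weighted sum of these inequalities, and (iii) applying convexity of $\phi_k$ through the prescribed averaging rule.

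First, I would recast the joint ``subgradient step plus ball projection'' (\Cref{algo_line:sgd_step,algo_line:prox-slide-proj_proj}) as a single constrained proximal minimization. Because $\bar{X}$ is a Euclidean ball, rescaling the unconstrained update gives exactly
\begin{equation*}
u_{k,t} \;=\; \arg\min_{u \in \bar{X}} \Bigl\{ \Ip{\hg_{k,t-1}}{u} + \tfrac{\beta_k}{2}\|u-\bar{u}_0\|^2 + \tfrac{t\beta_k}{4}\|u-u_{k,t-1}\|^2\Bigr\},\quad \bar{u}_0 \defeq \dx_{k-1}' - \tfrac{1}{\beta_k}\nabla_{\mx_k'}\psi_\lambda(\mx_k,\mx_k').
\end{equation*}
The virtue of this form is that it treats the quadratic part of $\phi_k$ exactly, so the eventual error depends only on $\|\hg_{k,t-1}\|$ rather than on $\|\hg_{k,t-1} + \beta_k(u_{k,t-1}-\bar{u}_0)\|$. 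Instantiating the first-order optimality condition at $u = x' \in \bar{X}$ and applying the three-point identity $\Ip{a-b}{c-a} = \tfrac{1}{2}(\|c-b\|^2 - \|a-b\|^2 - \|c-a\|^2)$ to each quadratic penalty would bound $\Ip{\hg_{k,t-1}}{u_{k,t}-x'} + \tfrac{\beta_k}{2}[\|u_{k,t}-\bar{u}_0\|^2 - \|x'-\bar{u}_0\|^2]$ by $\tfrac{t\beta_k}{4}\|x'-u_{k,t-1}\|^2 - \tfrac{(t+2)\beta_k}{4}\|x'-u_{k,t}\|^2 - \tfrac{t\beta_k}{4}\|u_{k,t}-u_{k,t-1}\|^2$.

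Next I would convert this into a bound on $\phi_k(u_{k,t}) - \phi_k(x')$. Splitting $f(u_{k,t}) - f(x') = [f(u_{k,t}) - f(u_{k,t-1})] + [f(u_{k,t-1}) - f(x')]$, applying $G$-Lipschitzness to the first piece, convexity at $u_{k,t-1}$ to the second, and reanchoring $\Ip{g_{k,t-1}}{u_{k,t-1}-x'}$ at $u_{k,t}$ gives $f(u_{k,t}) - f(x') \le \Ip{g_{k,t-1}}{u_{k,t}-x'} + 2G\|u_{k,t}-u_{k,t-1}\|$. After writing $g_{k,t-1} = \hg_{k,t-1} - \delta_{k,t-1}$, splitting $\Ip{\delta_{k,t-1}}{u_{k,t}-x'}$ at $u_{k,t-1}$ via Cauchy--Schwarz, and absorbing $(2G+\|\delta_{k,t-1}\|)\|u_{k,t}-u_{k,t-1}\|$ into the $-\tfrac{t\beta_k}{4}\|u_{k,t}-u_{k,t-1}\|^2$ term by completing the square, the per-iteration inequality becomes
\begin{equation*}
\phi_k(u_{k,t}) - \phi_k(x') \;\le\; \tfrac{t\beta_k}{4}\|u_{k,t-1}-x'\|^2 - \tfrac{(t+2)\beta_k}{4}\|u_{k,t}-x'\|^2 + \tfrac{(2G+\|\delta_{k,t-1}\|)^2}{t\beta_k} + \Ip{\delta_{k,t-1}}{x'-u_{k,t-1}}.
\end{equation*}
Multiplying by $c_t = t+1$ and summing over $t = 1,\ldots,T_k$, the distance terms telescope exactly since $c_t(t+2) = c_{t+1}(t+1)$; the factor $(t+1)/t \le 2$ collapses the gradient-norm sum into $\tfrac{2}{\beta_k}\sum_{s=0}^{T_k-1}(2G+\|\delta_{k,s}\|)^2$; and the reindexing $s = t-1$ turns the noise sum into $\sum_{s=0}^{T_k-1}(s+2)\Ip{\delta_{k,s}}{x'-u_{k,s}}$. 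Dividing through by $\sum_{t=1}^{T_k}(t+1) = T_k(T_k+3)/2$ and invoking convexity of $\phi_k$ with the identity $\tdx_k' = \sum_{t=1}^{T_k} \tfrac{2(t+1)}{T_k(T_k+3)}\,u_{k,t}$---obtained by unrolling the averaging recursion with $1-\theta_s = \tfrac{(s-1)(s+2)}{s(s+3)}$ and the telescoping product $\prod_{s=t+1}^{T_k}(1-\theta_s) = \tfrac{t(t+3)}{T_k(T_k+3)}$---delivers the claimed bound.

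The main technical delicacy will be the proximal reformulation in the first step. If one instead analyzes the update as a plain subgradient step on $\phi_k$ with step size $\tfrac{2}{(t+2)\beta_k}$, the error necessarily carries $\|\hg_{k,t-1} + \beta_k(u_{k,t-1}-\bar{u}_0)\|^2$, which cannot be controlled by $(2G+\|\delta_{k,t-1}\|)^2$ in general since $\beta_k\|u_{k,t-1}-\bar{u}_0\|$ can be large. The refined proximal view with the explicit $\tfrac{t\beta_k}{4}\|u-u_{k,t-1}\|^2$ penalty is what makes the $\beta_k(u_{k,t-1}-\bar{u}_0)$ contribution cancel exactly against the $\tfrac{\beta_k}{2}\|u_{k,t}-\bar{u}_0\|^2$ piece of $\phi_k$ through the three-point identity, which is what ultimately yields the clean $(2G+\|\delta_{k,t-1}\|)^2$ numerator in the statement.
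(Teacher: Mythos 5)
Your proposal is correct and follows essentially the same route as the paper's proof: you recast the subgradient-plus-ball-projection step as a constrained proximal minimization whose objective matches the paper's $F_t$ up to an additive constant and an index shift, derive the same per-iteration inequality (strong convexity/three-point identity, the $2G$ Lipschitz bound, splitting the noise at $u_{k,t-1}$, completing the square to get $(2G+\|\delta_{k,t-1}\|)^2/(t\beta_k)$), and then apply the same $(t+1)$-weighted telescoping sum and the same unrolled averaging identity $\tdx_k' = \sum_{t=1}^{T_k}\tfrac{2(t+1)}{T_k(T_k+3)}u_{k,t}$ with Jensen's inequality. The only differences are cosmetic (index conventions, normalizing the weights after rather than before summation, and obtaining the $2G$ term via Lipschitzness rather than a subgradient at the new iterate), so no gap to report.
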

{\color{violet} {\it Aside:} Note that \proxslide procedure essentially applies $T_k$ steps of the proximal standard subgradient method to the $\phi_k$ \eqref{eq:general-thm-inner-opt}, which is a composite function of a $G$-Lipschitz function $f$ and \prox-friendly $\beta_k$-strongly convex quadratic. Finally the procedure outputs the average of its iterate ${\tdx}_k'$ and its last iterate ${\dx}_k'$. In the end we will set $T_k = \Theta(1/\varepsilon)$ and $K = \Theta(1/\varepsilon)$ so that total number of subgradients used by the algorithm be $\sum_{k=1}^K T_k = \Ord(1/\varepsilon^2)$.}

Now substituting \eqref{eq:general-thm-eq6} and Proposition~\ref{prop:general-thm-prop-2} into \eqref{eq:general-thm-eq5b} we get
\begin{align}
k(k+1)\Psi_\lambda({\px}_{k},{\px}_{k}') 
\leq\; &k(k-1) \Psi_\lambda({\px}_{k-1},{\px}_{k-1}') + 2k \psi_\lambda({\mx}_{k},{\mx}_{k}') +
\nonumber \\&
2k [ \Ip{\nabla_{k,x}}{x - {\mx}_{k}} + \frac{\beta_k}{2}  (\|{\dx}_{k-1} - x\|^2 - \|{\dx}_{k} - x\|^2) + \eta_k] \nonumber \\
&2k [ f(x') + \Ip{\nabla_{k,x'}}{x' - {\mx}_{k}'} ] + \nonumber \\
&2k [\frac{(T_k+1)(T_k+2)}{T_k(T_k+3)} \frac{\beta_k}2 \|{\dx}_{k-1}'-x'\|^2 - \frac{(T_k+1)(T_k+2)}{T_k(T_k+3)} \frac{\beta_k}2 \|{\dx}_{k}'-u\|^2 + \eta_k'] \nonumber \\ 
\leq\; &k(k-1) \Psi_\lambda({\px}_{k-1},{\px}_{k-1}') + 2k \Psi_\lambda(x,x') + 2k (\eta_k + \eta_k')
\nonumber \\&
\frac4\lambda ( \|{\dx}_{k-1} - x\|^2 - \|{\dx}_{k} - x\|^2) \nonumber \\
&\frac4\lambda ( \frac{(T_k+1)(T_k+2)}{T_k(T_k+3)} \|{\dx}_{k-1}'-x'\|^2 - \frac{(T_{k+1}+1)(T_{k+1}+2)}{T_{k+1}(T_{k+1}+3)} \|{\dx}_{k}'-x'\|^2 ) \,,
\end{align}
where we use the shorthand 
\begin{align}
\eta_k' \defeq \frac{4\,\sum_{t=0}^{T_k-1} (2G + \|\delta_{k,t}\|)^2}{\beta_k T_k(T_k+3) }  + \sum_{t=0}^{T_k-1} \frac{2(t+2)}{T_k(T_k+3)} \Ip{\delta_{k,t}}{x' - u_{k,t}}\,,\label{eq:proxslide_error}
\end{align}
and the last inequality uses convexity of $\psi_\lambda$ and $\Psi_\lambda = f + \psi_\lambda$, definition of $\beta_k$ (\Cref{algo_line:envelope-subgrad-param-set}), and the fact that
\begin{align}
T_{k} \leq T_{k+1} \quad \text{(\Cref{algo_line:envelope-subgrad-param-set})} \quad \text{, and } \quad \frac{(T_{k+1}+1)(T_{k+1}+2)}{T_{k+1}(T_{k+1}+3)} \leq \frac{(T_{k}+1)(T_{k}+2)}{T_{k}(T_{k}+3)} 
\end{align}
This proves the approximate descent guarantee: $\Phi_{k} \leq \Phi_{k-1} + k (\eta_k + \eta_k')$, which along with the facts: $1 \leq T_1$ and ${\dx}_0 = {\dx}_0' = x_0$ gives
\begin{align}
\Psi_\lambda({\px}_{K},{\px}_{K}') - \Psi_\lambda(x,x') \leq &\frac{4 (\|x_0 - x\|^2 + (3/2) \|x_0-x'\|^2)}{\lambda  K(K+1)} +
\frac{\sum_{k=1}^{K} 2k (\eta_k + \eta_k')}{K(K+1)} \label{eq:envelope-subgrad-method-thm-eq1}
\end{align}
Now we take expectation, with respect to randomness in all the stochastic subgradients $((\hg_{k,i})_{i=1}^{T_k})_{k=1}^K$ used in the algorithm, on both sides of \eqref{eq:envelope-subgrad-method-thm-eq1}. Then the expectation of the error from the \proxslide procedure can be bounded as follows
\begin{align}
\sum_{k=1}^{K} 2k \E[\eta_k'] &= \sum_{k=1}^{K} 2k \E[\frac{4\,\sum_{t=0}^{T_k-1} (2G + \|\delta_{k,t}\|)^2}{\beta_k T_k(T_k+3) }  + \sum_{t=0}^{T_k-1} \frac{2(t+2)}{T_k(T_k+3)} \Ip{\delta_{k,t}}{u - u_{t}}] \nonumber \\
&\leq \sum_{k=1}^{K} 2k \frac{8\, (4G^2 + \sigma^2)}{(\frac4{\lambda k}) (\frac{(4 G^2 + \sigma^2) \lambda^2 K k^2}{2\tilde{D}}) }  + 0 \nonumber \\
&= \frac{8 \tilde{D}}{\lambda} \label{eq:envelope-subgrad-method-thm-eq2}
\end{align}
where we use \eqref{eq:proxslide_error}, linearity of expectation, $(a+b)^2 \leq 2(a^2 + b^2)$, variance of stochastic gradient $\E[\|\delta_{k,t}\|^2 | u_{k,t}] = \E[\|\hg_{k,t} - g_{k,t}\|^2 | u_{k,t}] \leq \sigma^2$ \eqref{eq:sfo}, the value of $T_k$ from \Cref{algo_line:envelope-subgrad-param-set}, and the fact that expectation of the second term becomes zero, since $\E [\hg_{k, i-1} \,|\, u_{k, i-1}] = g_{k, i-1}$, which in turn implies
\begin{align}
\E [\Ip{\delta_{k,t}}{x' - u_{k,t}}] &= \E  \big[\,\E[ \Ip{\hg_{k, t} - g_{k, t}}{x' - u_{k,t}} \,|\, u_{k,t}] \,\big] s= \E [ \Ip{0}{x' - u_{k,i-1}} ] = 0\;.  \label{eq:general-thm-eq11}
\end{align}
{\color{violet} {\it Aside:} Note that, in the final guarantee, when we set $\lambda = \varepsilon/G^2$ and $K = \Ord(1/\varepsilon)$, we are setting $T_k = \Theta(\varepsilon\, k^2) = \Ord(1/\varepsilon)$ and $1/\beta_k = \Theta(1/\varepsilon\,k) = \Ord(1)$, so that the error from the \proxslide procedure is small enough. For example, at $k=K$, $\E[\eta_K] = \Ord((G^2 + \sigma^2)/\beta_K T_K) = \Ord(\varepsilon)$.}

Now taking expectation on both sides of \eqref{eq:envelope-subgrad-method-thm-eq1} and using linearity of expectation and \eqref{eq:envelope-subgrad-method-thm-eq2} we get that
\begin{align}
\E[\Psi_\lambda \left({\px}_{K}, {\px}_{K}'\right)] - \Psi_\lambda(x,x') \leq &\frac{4 (\|x_0 - x\|^2 + (3/2) \|x_0-x'\|^2 + 2\tilde{D})}{\lambda  K(K+1)} +
\frac{\sum_{k=1}^{K} 2k\, \eta_k }{K(K+1)} \label{eq:envelope-subgrad-method-thm-eq3}
\end{align}
Next setting $x'=x = x^* \in \cX \subseteq \bX$ and using Lemma~\ref{lem:joint-gap-to-func-gap} and \eqref{eq:joint_moreau_opt} we get that
\begin{align}
\E[f\left({\px}_{K}\right)] - f(x^*) \leq \frac{10\|x_0 - x^*\|^2 + 8\tilde{D}}{\lambda  K(K+1)} + \frac{\sum_{k=1}^{K} 2k\, \eta_k }{K(K+1)} + G^2 \frac\lambda2 \label{eq:envelope-subgrad-method-thm-eq5}
\end{align}
{\color{violet} {\it Aside:} Note that, in the final guarantee, when we set $\lambda = \varepsilon/G^2$, the third term, which is the error from the Moreau smoothing becomes $\varepsilon/2$. Additionally, when $K = \Ord(1/\varepsilon)$, first term above is $\Ord(\varepsilon)$. Further, when we set $T_k = \Theta(\varepsilon\,k^2) = \Ord(1/\varepsilon)$, we get $1/\beta_k = \Ord(1/\varepsilon\,k)$ and $\E[\eta_k] = \Ord(1/k)$ so that the second term is also $\Ord(\varepsilon)$.}
\end{proof}

Next using the above result we derive the guarantees for \aproj~(Algorithm~\ref{algo:envelope_subgrad_method_proj}) and \almo~(Algorithm~\ref{algo:envelope_subgrad_method_lmo}) as corollaries of Theorem~\ref{thm:envelope-subgrad-method-thm}.

\subsubsection{Proof of Theorem \ref{thm:envelope-subgrad-method-cor1}}

\label{sec:envelope-subgrad-method-cor1-pf}

\begin{proof}
Notice that exact projection on \Cref{algo_line:dual_compute_proj} of Algorithm~\ref{algo:envelope_subgrad_method_proj} is equivalent to choosing $\eta_k = 0$ in Algorithm~\ref{algo:envelope_subgrad_method}. Then setting, $\eta_k = 0$, and $\lambda = \varepsilon/G^2$ in Theorem~\ref{thm:envelope-subgrad-method-thm} we get
\begin{align}\label{eq:envelope-subgrad-method_proj-eq1}
\E[f\left({\px}_{K}\right)] - f(x^*) \leq \frac{G^2 (10\|x_0 - x^*\|^2 + 8\tilde{D})}{\varepsilon  K(K+1)} + 0 + \frac\varepsilon2
\end{align}
Now, by using the given choices: $\tilde{D} = c \|x_0 - x^*\|^2$ and $K = \lceil \frac{2 \sqrt{10+8c} \, G \|x_0 - x^*\|}{\varepsilon}\rceil$, we get
\begin{align}\label{eq:envelope-subgrad-method_proj-eq2}
\E[f\left({\px}_{K}\right)] - f(x^*) 
&\leq \varepsilon
\end{align}
Then the number of PO calls made by the algorithm is $K = \Ord({\frac{G\|x_0 - x^*\|}{\varepsilon}})$ and the total number of SFO calls made subgradients made is
\begin{align}
\sum_{k=1}^{K} T_{k} \leq \sum_{k=1}^{K}\left(\frac{(4G^{2} + \sigma^2) \lambda^2 K k^{2}}{2 \tilde{D}}+1\right) &= \frac{(4G^{2} + \sigma^2) \varepsilon^2 K^2(K+1)(2K+1)}{12 c G^{4} \| x_{0} - x^*_{\lambda} \|^2}+ K \nonumber \\
&= \order{\frac{(G^{2} + \sigma^2) \|x_0 - x^*_{} \|^2}{\varepsilon^2}}\,,
\end{align}
where we used \Cref{algo_line:envelope-subgrad-param-set_proj} and the given choices for $\lambda$, $K$, and $\tilde{D}$.
\end{proof}

\subsubsection{Proof of Theorem \ref{thm:envelope-subgrad-method-cor2}}\label{sec:envelope-subgrad-method-cor2-pf}
\begin{proof}

Notice that at step $k$ of Algorithm~\ref{algo:envelope_subgrad_method_lmo} choosing $\hat{T}=\lceil \frac{7 K D_\cX^2}{c' \tilde{D}} \rceil  = \Ord(\frac1\varepsilon)$ is equivalent to choosing $\eta_k = \frac{4 c' \tilde{D}}{\lambda K k}$ in Algorithm~\ref{algo:envelope_subgrad_method} (see below). Therefore by setting, $\eta_k = \frac{4 c'\tilde{D}}{\lambda K k} = \Ord(\frac1k)$, and $\lambda = \varepsilon/G^2$ in Theorem~\ref{thm:envelope-subgrad-method-thm} we get
\begin{align}\label{eq:envelope-subgrad-method_lmo-eq1}
\E[f\left({\px}_{K}\right)] - f(x^*) \leq &\frac{G^2 (10\|x_0 - x^*\|^2 + 8\tilde{D} + 8c' \tilde{D})}{\varepsilon K(K+1)} + \frac\varepsilon2
\end{align}
Now, by using the given choices: $\tilde{D} = c \|x_0 - x^*\|^2$ and $K = \lceil \frac{2 \sqrt{10+8c(1+c')} G \|x_0 - x^*\|}{\varepsilon}\rceil$, in the \eqref{eq:envelope-subgrad-method_lmo-eq1} we get
\begin{align}\label{eq:envelope-subgrad-method_lmo-eq2}
\E[f\left({\px}_{K}\right)] - f(x^*) %
&\leq \varepsilon
\end{align}
Then using the similar arguments as in proof of Theorem~\ref{thm:envelope-subgrad-method-cor2}, we can show that $K = \Ord({\frac{G\|x_0 - x^*\|}{\varepsilon}})$ and the total number of SFO calls made is $\sum_{k=1}^{K} T_{k} = \Ord({\frac{(G^{2} + \sigma^2) \|x_0 - x^*_{} \|^2}{\varepsilon^2}})$.

Finally we calculate the total number of LMO calls made. 
At outer step $k$ of Algorithm~\ref{algo:envelope_subgrad_method_lmo}, after using $\hat{T} = \lceil \frac{7 K D_\cX^2}{c' \tilde{D}} \rceil$ LMO calls in the \projstep procedure, the Wolfe duality gap \eqref{eq:envelope-subgrad-FWdual} is at most $\ceil{\frac{6 \beta_k D_{\cX}^2}{\hat{T}}} \leq \frac{4 c'\tilde{D}}{\lambda K k} = \eta_k$ if we use CndG procedure~\cite[Theorem 2.2(c)]{lan2016conditional} or $\ceil{\frac{7 \beta_k D_{\cX}^2}{\hat{T}}} \leq \frac{4 c'\tilde{D}}{\lambda K k} = \eta_k$ if we use the standard Frank-Wolfe algorithm~\cite[Theorem 2]{jaggi2013revisiting}. Therefore the total number of linear minimization oracle calls made by the algorithm is
\begin{align}
K \hat{T} = \frac{7 K^2 D_{\cX}^2}{ cc'\|x_0 - x^*\|^2} + K = \Ord \Big( \frac{G^2 D_{\cX}^2}{\varepsilon^2}\Big)\,,
\end{align}
where we used \Cref{algo_line:envelope-subgrad-param-set_proj} and the given choices for $K$ and $\tilde{D}$.	
\end{proof}

\subsubsection{Proof of Proposition~\ref{prop:general-thm-prop-2}: Analysis of \proxslide (\Cref{algo_line:proxslide}) procedure}
\label{prop:general-thm-prop-2-pf}
\begin{proof}
	We analyze the \proxslide procedure for a fixed $k$, therefore we drop $k$ from $\phi_k$, $u_{k, t}$, $\hg_{k, t}$, and $\delta_{k, t}$, which are denoted here with $\phi$, $u_t$, $\hg_t$, and $\delta_t$. \proxslide has the following update steps.
	\begin{align}
	\theta_{t} &=  \frac{2(t+1)}{t(t+3)} \,,\;\; \hg_{t-1} = \sfo{u_{t-1}}\\
	\widehat{u}_{t} &= u_{t-1} - \frac{1}{(1+t/2)\beta} \cdot (\hg_{t-1} + \beta (u_{t-1} -(u' - g/\beta))) \\
	u_{t} &= \widehat{u}_{t} \cdot \min(1, 2R/\|\widehat{u}_{t}\|) \\
	\widetilde{u}_{t} &= \left(1-\theta_{t}\right) \widetilde{u}_{t-1}+\theta_{t} u_{t}
	\end{align}
	By convexity and $G$-Lipschitzness of $f$ in $\bX$, for any $u \in \bX$, we get
	\begin{align}
	f(u_{t+1}) - f(u) &=  f(u_{t+1}) - f(u_t) + f(u_{t}) - f(u) \nonumber \\
	&\leq \Ip{g_{t+1}}{u_{t+1} - u_t} + \Ip{g_{t}}{u_{t} - u}\nonumber \\
	&= \Ip{\hg_{t}}{u_{t+1} - u_t} + \Ip{g_{t+1} - g_t - \delta_t}{u_{t+1} - u_t} + \Ip{\hg_{t}}{u_{t} - u} - \Ip{\delta_t}{u_{t} - u}\nonumber \\
	&\leq \Ip{\hg_t}{u_{t+1} - u} + (2G + \|\delta_t\|)\,\|u_{t+1} - u_t\| + \Ip{\delta_t}{u - u_{t}}\,,
	 \label{eq:proxslide_eq1}
	\end{align}
	where we used the fact that $\delta_t = \hg_t - g_t$. Notice that $u_t = \widehat{u}_{t} \cdot \min(1, 2R/\|\widehat{u}_{t}\|)$ is the projection of $\widehat{u}_{t}$ onto $\bX = B(0, 2R)$. Therefore, using \Cref{algo_line:sgd_step}, we can re-write \proxslide update as
	\begin{align}
	u_{t+1} &= \argmin_{u \in \bX} \frac{(t+3)\beta}4 \; \|u-\widehat{u_t}\|^2 \nonumber \\
	&= \argmin_{u \in \bX} \frac{(t +3)\beta}4 \; \Big\|u - \Big(u_{t-1} - \frac1{(1+((t+1)/2)\beta}\cdot (\hg_{t-1} + \beta (u_{t-1} -(u_0 - g/\beta))) \Big) \Big\|^2 \nonumber \\
	&= \argmin_{u \in \bX} \Big[F_{t}(u) \defeq \Ip{g}{u} + \Ip{\hg_t}{u} + \frac{(t+1)\beta}4 \|u-u_t\|^2 + \frac\beta2 \|u-u_0\|^2\Big]
	\end{align}
	By $\beta(t+3)/2$-strong convexity of the quadratic update objective $F_{t}(u)$ and the optimality of $u_{t+1} \in \argmin_{u \in \bX} F_t(u)$, we get that for any $u \in \bX$
	\begin{align}
	\frac{\beta(t+3)}4 \|u_{t+1} - u\|^2 + F_{t}(u_{t+1}) &\leq F_{t}(u) \label{eq:proxslide_eq2}
	\end{align}
	We want to provide a lower bound on $\phi(u)$ \ref{eq:general-thm-inner-opt} which is defined as follows, when using the private notation of the \proxslide procedure by setting $u=x'$, $u' = \dx_{k-1}$, $g = \nabla_{k, x'}$, $\beta = \beta_k$.
	\begin{align}
	\phi(u) \defeq f\left(u\right)  +
	\Ip{g}{u} + \frac{\beta}{2}\left\|u-u_0\right\|^{2}\,.
	\end{align}
	Now adding together \eqref{eq:proxslide_eq1} and \eqref{eq:proxslide_eq2} and using the definitions of  $F_t$ and $\phi$ we get
	\begin{align}
	\phi(u_{t+1}) -\phi(u) &\leq \frac\beta2 (\frac{t+1}2 \|u_{t}-u\|^2 - \frac{t+3}2 \|u_{t+1}-u\|^2) + \nonumber \\&\;\;\;\;\;
	(2G + \|\delta_t\|)\,\|u_{t+1} - u_{t}\| - \frac{\beta \,(t+1)}{4}\|u_{t+1} - u_{t}\|^2  + \Ip{\delta_t}{u - u_{t}} \nonumber \\
	&\leq \frac\beta2 (\frac{t+1}2 \|u_{t}-u\|^2 - \frac{t+3}2 \|u_{t+1}-u\|^2)  +  \frac{(2G + \|\delta_t\|)^2}{\beta \, (t+1)} + \Ip{\delta_t}{u - u_{t}}
	\end{align}
	where the second inequality follows from $ax - b x^2/2 \leq a^2 /2b$. Now multiplying the above inequality is by $2(t+2)/T(T+3)$ and then summing over $t=\{0,\ldots,T-1\}$, we get
	\begin{align}
	\sum_{t=0}^{T-1} \frac{2(t+2)}{T(T+3)} (\phi(u_{t+1}) - \phi(u)) &\leq \frac\beta{2} \frac2{T(T+3)} (\|u_{0}-u\|^2 - \frac{(T+1)(T+2)}{2}\|u_{T}-u\|^2) + \nonumber \\ 
	&\;\;\;\;\; \frac{2}{T(T+3)} \frac{2\,\sum_{t=0}^{T-1} (2G + \|\delta_t\|)^2}{\beta}  + \sum_{t=0}^{T-1} \frac{2(t+2)}{T(T+3)} \Ip{\delta_t}{u - u_{t}} \nonumber \\
	\phi(\widetilde{u}_{T}) - \phi(u) &\leq
	\end{align}
	where the last inequality uses Jensen's inequality and $\widetilde{u}_T = \sum_{t=1}^T \frac{2(t+1)}{T(T+3)} u_t$, last of which follows from \Cref{algo_line:prox-slide-param-set,algo_line:prox-slide-avg} as follows
	\begin{align}
	\widetilde{u}_T &= (1 - \theta_T) \widetilde{u}_{T-1} + \theta_T u_T \nonumber \\
	&= \frac{(T-1)(T+2)}{T(T+3)} ((1 - \theta_{T-1}) \widetilde{u}_{T-2} + \theta_{T-1} u_{T-1})  + \frac{2(T+1)}{T(T+3)} u_T \nonumber \\
	&= \frac{(T-2)(T+1)}{T(T+3)} \widetilde{u}_{T-2} + \frac{2(T)}{T(T+3)}  u_{T-1} + \frac{2(T+1)}{T(T+3)}  u_T \nonumber \\
	&\;\; \vdots \nonumber \\
	&= \sum_{t=1}^{T} \frac{2(t+1)}{T(T+3)} u_{t}
	\end{align}
	Finally we get the desired result by setting $\phi = \phi_k$, $\beta = \beta_k$, $T = T_k$, $u_0 = {\tdx}_{k-1}'$, $u = x'$, $u_t = u_{k,t}$, $\widetilde{u}_T = {\tdx}_{k}'$, and ${u}_T = {\dx}_{k}$
	we get the desired inequality
	\begin{align}
	\phi_k({\tdx}_k') - \phi_k (x') &\leq \frac2{T_k(T_k+3)} \frac{\beta_k}2 \|{\dx}_{k-1}'-x'\|^2 - \frac{(T_k+1)(T_k+2)}{T_k(T_k+3)} \frac{\beta_k}2 \|{\dx}_{k}'-x'\|^2 + \nonumber \\ 
	&\;\;\;\;\; \frac{4\,\sum_{t=0}^{T_k-1} (2G + \|\delta_{k,t}\|)^2}{\beta_k T_k(T_k+3) }  + \sum_{t=0}^{T_k-1} \frac{2(t+2)}{T_k(T_k+3)} \Ip{\delta_{k,t}}{x' - u_{k,t}}
	\end{align}
\end{proof}

\subsection{Proof of Lemma~\ref{lem:moreau-properties}}
\label{lem:moreau-properties-pf}
We re-write $f_{\lambda}(x)$ as minimum value of a $\frac1{\lambda}$-strong convex function $\phi_{\lambda, x}$ as follows
\begin{align} \label{eq:moreau-reformula}
f_{\lambda}(x) = \min_{x' \in \bX} \bigg[ \phi_{\lambda, x}(x') \defeq f(x') + \frac{1}{2\lambda} \|x - x'\|^2 \bigg]\,.
\end{align}
Note that $\phi_{\lambda,x}(\cdot)$ is a $(1/\lambda)$-strongly convex function as $f$ is convex and $(1/\lambda) \|\cdot - x\|^2$ is strongly convex, and $f_{\lambda}(x) = \min_{x' \in \bX} \phi_{\lambda, x}(x')$.\\
(a) The existence and uniqueness of $\hat{x}_{\lambda}(x) \in \bX$ follows from the strong convexity of $\phi_{\lambda, x}(\cdot)$ and the fact that $f$ is a proper convex function. Then $f(\hat{x}_{\lambda}(x)) \leq \phi_{\lambda, x}(\hat{x}_{\lambda}(x)) = \min_{x' \in \bX} \phi_{\lambda, x}(x') = f_{\lambda}(x) \leq \phi_{\lambda, x}(x) = f(x)$.
\\ 
(b) Let $g_x \defeq {(x- \hat{x}_\lambda(x))}/\lambda$ for any $x \in \vX$. By $(1/\lambda)$-strong convexity of $\phi_{\lambda,x}(x')$ and $\hat{x}_\lambda(x) = \argmin_{x' \in \bX} \phi_{\lambda,x}(x')$, we have, for any $x' \in \bX$, that 
\begin{align}
\phi_{\lambda,x}(x') &\geq \phi_{\lambda,x}(\hat{x}_\lambda(x)) + \|{x'-\hat{x}_\lambda(x)}\|^2/2\lambda \nonumber \\
\iff f(x') +  \|{x'-x}\|^2/2\lambda &\geq f(\hat{x}_\lambda(x)) + \|{x'-\hat{x}_\lambda(x)}\|^2/2\lambda + \|{x'-\hat{x}_\lambda(x)}\|^2/2\lambda \nonumber \\
\iff f(x') &\geq f(\hat{x}_\lambda(x)) + \Ip{g_x}{x' - \hat{x}_\lambda(x)}
\end{align}
Using this, for any $x, y \in \reals^d$ we get
\begin{align}
f_\lambda(y) - f_\lambda(x) &= f(\hat{x}_\lambda(y)) - f(\hat{x}_\lambda(x)) + (\|\hat{x}_\lambda(y) - y\|^2 - \|\hat{x}_\lambda(x) - x\|^2)/{2\lambda} \nonumber \\
&\geq \Ip{g_x}{\hat{x}_\lambda(y) - \hat{x}_\lambda(x)} + {\lambda/2}(\|g_y\|^2 - \|g_x\|^2) = \Ip{g_x}{y - x} + \lambda/2 \|g_x - g_y\|^2
\end{align}
By instantiating the above for $y\leftarrow x$, $x\leftarrow y$, we also get $f_\lambda(y) - f_\lambda(x) \leq \Ip{g_y}{y - x} - \lambda/2 \|g_x - g_y\|^2$. Combining these two inequalities
\begin{align}
0 \leq \lambda/2 \|g_y-g_x\|^2 \leq f_\lambda(y) - f_\lambda(x) - \Ip{g_x}{y - x} 
&\leq - \lambda/2 \|g_y-g_x\|^2 + \Ip{g_y - g_x}{y - x} \nonumber \\
&\leq - \lambda/2 \|g_y-g_x\|^2 + \|g_y - g_x\|\|y - x\| \nonumber \\
&\leq \|y-x\|^2/2\lambda
\end{align}
This implies that $\lim_{y \to x}  (f_\lambda(y) - f_\lambda(x) - \Ip{g_x}{y - x})/\|y-x\| = 0$. Thus $f_\lambda$ is Frechet differentiable with gradient $\nabla f_\lambda(x) = g_x = (x-\hat{x}_\lambda(x))/\lambda$. The above inequality also implies $f_\lambda$ is convex and $1/\lambda$-smooth.
\\
(c) Let $x \in \bX$. 
Using $1/\lambda$-strong convexity of $\phi_{\lambda,x}$ and $\hat{x}_{\lambda}(x) \in \argmin_{x' \in \bX} \phi_{\lambda,x}(x')$, and $G$-Lipschitzness of $f$ in $\bX$, we get
\begin{align*}
\|x-\hat{x}_{\lambda}(x)\|^2/2\lambda \leq \phi_{\lambda,x}(x) - \phi_{\lambda,x}(\hat{x}_{\lambda}(x))  &= f(x) - f_\lambda(x)\\
&= f(x) - f(\hat{x}_\lambda(x) ) -  \|x-\hat{x}_{\lambda}(x)\|^2/2\lambda \\
&\leq G \|\hat{x}_{\lambda}(x) - x\| - \|x-\hat{x}_{\lambda}(x)\|^2 /2\lambda
\leq G^2 \lambda /2\;. \qed
\end{align*}

\section{Additional details for the experiments in Section \ref{sec:exps}}
\label{sec:exps_details}
For all the experiments we randomly and uniformly sample a point $x_0$ from the surface of the nuclear norm ball of radius $r$. 
For all the figures where we plot the estimated sub-optimality gap: $f(x_k) - \hat{f}^*$,  where $\hat{f}^*$ is the estimated minimum function value calculated by running the \psgd method for a large number of iterations.
We plot the mean (standard error is negligible) of the sub-optimality gap over 10 runs using 10 different initial points $x_0$'s (same 10 initial points for all algorithms).

For experiments in Figures \ref{fig:aproj_imagewoof} and \ref{fig:almo_imagewoof}, we use a subset of the Imagewoof 2.0 dataset~\cite{How2019Imagenette}, which in itself is a subset of the Imagenet dataset \cite{deng2009imagenet}.
The training data, contains $n=400$ samples $\{(A_i, y_i)\}_{i=1}^n$ where $A_i$ is a $224\times224$ grayscale image of one of the two types of dogs (classes n$02087394$ and n$02115641$ in Imagenet dataset) labeled using $y_i \in \{0,1\}$. Note that the effective dimension is $d = 224\times 224 = 50176$). These grayscale images are generated from the raw $8$-bit RGB Imagewoof images using the Pillow python image-processing library~\cite{Cla2020Pillow}, by $(i)$ resizing to $256\times256$ pixels: \verb|resize(256,256)|, $(ii)$ cropping to the central $224\times224$ pixels: \verb|crop(16,16,240,240)|, $(iii)$ converting to the grayscale: \verb|convert(mode=`L')|, and $(iv)$ normalizing by $255.0$ so that the pixel values lie in range $[0,1]$. For incorporating bias scalar into the SVM model we also zero-pad the training images with an additional column and row of zeros to the right and the bottom of the image array $A_i$. We use $r=0.1$ as nuclear norm ball radius of $\cX$, thus $D_\cX = 0.2$. We have access to a deterministic FO.

In Figure~\ref{fig:aproj_imagewoof}, we use a Lipschitz constant of $G=50$. For \aproj~we set $c=40$ and $\varepsilon=5.0$, and for \psgd~we use two stepsize schemes: ($i$) fixed stepsize $D_\cX/(G\sqrt{K})$ with $K=10^{3}$ and ($ii$) diminishing stepsize $D_\cX/(G\sqrt{k})$ with $K=10^3$.

In Figure~\ref{fig:almo_imagewoof}, we use a Lipschitz constant of $G=50$. For \almo~we set $c=40$, $c'=1$ and $\varepsilon=5.0$.
For \fwpsgd~we use two stepsize schemes: ($i$) fixed stepsize $D_\cX/(G\sqrt{K})$ with $K=10^{3}$ and ($ii$) diminishing stepsize $D_\cX/(G\sqrt{k})$ with $K=10^3$. Both of these stepsize schemes use a projection tolerance of $\eta_k G^2/2$. For \randfw we use the standard parameter choices as given in \cite[Theorem 5]{lan2013complexity} with $K = 150$.

In practice, in the deterministic setup with FO, at outer-step $k$, we can use the following criterion for stopping the \proxslide (\Cref{algo_line:proxslide_proj}) procedure early at some $t \geq \widehat{T}_{k-1}$ (defined recursively below with $\hT_0 = 1$) and $t \leq T_k$. Let $\phi_k(x') \defeq f\left(x'\right)  +\Ip{\nabla_{k,x'} }{x'} + \frac{\beta_k}{2}\left\|x'-{\dx}_{k-1}'\right\|^{2}$ and $\widetilde{g}_t \in \partial f(\tu_t)$. Now if 
\begin{align}
\max_{x' \in \cX} \; &\Ip{\tg_t + \nabla_{k,x'}}{\tu_t-x'} - \frac{(t+1)(t+2)}{t(t+3)} {\beta_k} \Ip{u_t - {\dx}_{k-1}'}{x'} \nonumber \\ &\leq \frac{8\,(4G^2 + \sigma^2 )}{\beta_k (T_k+3) } - \frac{\beta_k}{2} \|\tu_t - {\dx}_{k-1}'\|^2 + \frac{(t+1)(t+2)}{t(t+3)} \frac{\beta_k}2 (\|{\dx}_{k-1}'\|^2- \|u_t\|^2) \label{eq:proxslide-stopping-criterion}
\end{align}
then we stop the procedure, set $\widehat{T}_k = t$ and return $(u_t, \tu_t)$. This implies that for $({\dx}_{k}', {\tdx}_{k}') = (u_t, \tu_t)$
\begin{align}
\phi_k({\tu}_t) - \phi_k (x') &\leq \frac2{\widehat{T}_k(\widehat{T}_k+3)} \frac{\beta_k}2 \|{\dx}_{k-1}'-x'\|^2 - \frac{(\widehat{T}_k+1)(\widehat{T}_k+2)}{\widehat{T}_k(\widehat{T}_k+3)} \frac{\beta_k}2 \|u_t-x'\|^2 + \nonumber \\ &\;\;\;\;\; \frac{4\,\sum_{t=0}^{T_k-1} (2G)^2}{\beta_k T_k(T_k+3) }
\end{align}
for all $x' \in \cX$. Now the only change we need to make in the analysis of Theorem \ref{thm:envelope-subgrad-method-thm} is the change of the potential~\eqref{eq:almo-potential-func} to
\begin{align}
\Phi_k \defeq k(k+1) (\Psi_\lambda({\px}_{k},{\px}_{k}') - \Psi_\lambda(x,x')) + \frac{4}{\lambda} (\| {\dx}_{k} - x\|^2 + \frac{(\hT_{k+1}+1)(\hT_{k+1}+2)}{\hT_{k+1}(\hT_{k+1}+3)} \|  {\dx}_{k}' - x'\|^2)
\end{align}

The LHS of \eqref{eq:proxslide-stopping-criterion} is an linear optimization problem whose solution can be easily found as 
\begin{align}
&\lmo{\tg_t + \nabla_{k,x'} + \frac{(t+1)(t+2)}{t(t+3)} {\beta_k} (u_t - {\dx}_{k-1}')} \nonumber \\ = &\lim_{\alpha \to \infty} \po{-\alpha\bigg(\tg_t + \nabla_{k,x'} + \frac{(t+1)(t+2)}{t(t+3)} {\beta_k} (u_t - {\dx}_{k-1}')\bigg)} \,.
\end{align}
We also use a slightly modified $T_{k} = \Big \lceil{ \frac{2G^2\lambda^2 K k^2}{2\tilde{D}}} \Big \rceil$ for our experiments, since the deterministic FO we use, ensures this choice gets the same guarantees as given in our theorems. Also, in our implementation we do not explicitly project ${\dx}_{k}$ onto $\bX$, as in practice this does not seem needed.

In practice, we can eliminate the need for selecting $\varepsilon$ of \aproj by employing $\varepsilon$-doubling trick with warm restarts, which can increase the worse-case iteration complexity by a factor of at most $2$ but oftentimes will accelerate the convergence~\cite[Algorithm 6]{thekumparampil2019efficient}.

 \section{Additional details for applications}
 \label{sec:applications_details}
We refer to \cite{ravi2019deterministic} for some more nonsmooth problems which can be solved using an LMO. In the following subsections, we compare the analytical complexities for solving some of the applications mentioned in Section~\ref{sec:applications}, using different algorithms. 
 
\subsection{$\ell_1$ norm constrained SVM} \label{sec:ell1_SVM}
For simplicity, we work with the vector version of the matrix problems and replace nuclear norm constraint with the $\ell_1$ norm constraint. The standard $\ell_1$ norm constrained soft-margin SVM can be formulated as the the following optimization problem:
\begin{align}
&\min_{x \in \reals^{d}} f(x) = \frac1{n} \sum_{i = 1}^n [f_{i} (x) = \max(0, 1- \Ip{x}{a_{i}} ) ] \nonumber \\
&\text{subject to }\;\;\; \|x\|_{1} \leq \lambda
\end{align}
where $a_i \in \reals^d$ captures the $d$-dimensional feature vector multiplied by a binary class value in $\{-1, 1\}$ and $\cX=\{x \,|\, \|x\|_1 \leq 1 \}$ is the constraint set. We do not include any explicit bias term above, because it can always be incorporated into the model by augmenting $a_{i}$ with a constant dimension. We assume that $n$ is large and therefore we only have access to minibatched stochastic subgradients obtained through minibatching $d$ ($b = \ord(n)$) uniformly sampled (with replacement) training samples. We assume that $f$ is $G_p$-Lipschitz continuous and the variance of any stochastic subgradient is upperbounded by $\sigma_p^2$, both calculated in $\ell_p$ norm $\|\cdot\|_p$, for $p = 1,2$. We define $q \defeq {(1-1/p)^{-1}} \in \{\infty, 2\}$. Then
\begin{align}
G_p &= \max_{\|x\|_1\ \leq \lambda} \|\frac1n \sum_{i = 1}^n \indctr\{ \Ip{x}{a_{i}} < 1\} a_i\|_q \text{, and} \nonumber \\
\sigma_p^2 &= \max_{\|x\|_1\ \leq \lambda} \E_{\{I_j\}_{j=1}^b} \|\frac1n \sum_{i = 1}^n \indctr\{ \Ip{x}{a_{i}} < 1\}  a_i  - \frac1b \sum_{j = 1}^b \indctr\{ \Ip{x}{a_{I_j}} < 1\} a_{I_j}\|^2_q
\end{align}

{\bf PO:} First we study the case of PO (or MO: Mirror descent step oracle) in the high-dimensional ($\poly(G_p, \sigma_p, \lambda, 1/\varepsilon) \ll d$) and large-scale ($1 \ll \poly(n)$) regime. In Table \ref{tab:svm_compare_po} we provide the 
\pocc and \sfocc
of \aproj ($p=1$, Algorithm \ref{algo:envelope_subgrad_method_proj}) and competing nonsmooth methods: \psgd ($p=2$) \cite{goldstein1964convex,levitin1966constrained}, Mirror descent ($p=1$) \cite{nemirovsky1983problem}, Randomized smoothing  ($p=1$ or  $p=2$) \cite{duchi2012randomized}. The $p$ value in brackets marks which $\ell_p$ norm the method uses. By definition $G_1 \leq G_2 \leq \sqrt{d} G_1$ and $\sigma_1 \leq \sigma_2 \leq \sqrt{d} \sigma_1$. Therefore, in this high-dimensional and large-scale regime, and when $G_2 = \ord(\sqrt{d}G_1)$ and $\sigma_2 = \ord(\sqrt{d} \sigma_1)$, \aproj has a more efficient PO-CC than other competing nonsmooth first-order methods, while still maintaining $\Ord(1/\varepsilon^2)$ SFO-CC. Note that PO has a computational complexity of $O(d \log d)$ because it involves sorting~\cite{duchi2008efficient}, MO has a computational complexity of $\Ord(d)$, and SFO has a computational complexity of $\Ord(b(d+n))$ because it involves sampling $b$ vectors from a set of $n$ $d$-dimensional vectors. In practice, sorting could contribute to a significant part of the wall-clock time. 
\begin{table}[]
	\begin{center}
		\label{tab:svm_compare_po}
		{\small
			\begin{tabular}{l c  c}
				\toprule
				{\bf PO based methods (using $\ell_p$ norm)} & & \\ \midrule \midrule
				{\bf Nonsmooth methods ($p=2$)}& {{\bf PO}: $\Ord(d \ln\,d )$} & {{\bf SFO}: $\Ord(d+n)$} \\	\midrule 
				Our \aproj  ($p=2$) [Theorem~\ref{thm:envelope-subgrad-method-cor1}]& $\Ord\big(\frac{G_2}{\varepsilon}\lambda\big)$ & $\Ord\big(\frac{G_2^2+\sigma_2^2}{\varepsilon^2}\lambda^2\big)$ \\ 
				\psgd ($p=2$) & $\Ord\big(\frac{G^2_2}{\varepsilon^2}\lambda^2\big)$ & $\Ord\big(\frac{G_2^2+\sigma_2^2}{\varepsilon^2}\lambda^2\big)$ \\ 
				Randomized smoothing ($p=2$) \cite{duchi2012randomized} & $\Ord\big(d^{1/4}\frac{G_2}{\varepsilon}\lambda\big)$ & $\Ord\big(\frac{G_2^2+\sigma_2^2}{\varepsilon^2}\lambda^2\big)$ \\ 
				\midrule \midrule
				{\bf Nonsmooth methods ($p=1$)}& {{\bf MO}: $\Ord(d)$} & {{\bf SFO}: $\Ord(d+n)$} \\	\midrule 
				Mirror descent ($p=1$) \cite{nemirovsky1983problem} & $\Ord\big(\ln(d+1) \frac{G^2_1}{\varepsilon^2}\lambda^2\big)$ & $\Ord\big(\ln(d+1)\frac{G_1^2+\sigma_1^2}{\varepsilon^2}\lambda^2\big)$ \\ 				
				Randomized smoothing ($p=1$) \cite{duchi2012randomized} & $\Ord\big(\sqrt{d\ln(d+1)} \frac{G_1}{\varepsilon}\lambda\big)$ & $\Ord\big(\ln(d+1) \frac{G_1^2+\sigma_1^2}{\varepsilon^2}\lambda^2\big)$ \\ \midrule \midrule
				{\bf Minimax methods}: $\Ord(n)$ extra memory & {{\bf PO+MO}: $\Ord(d \ln\,d + n)$} & {{\bf SFO}: $\Ord(d+n)$} \\	\midrule 
				Variance reduced Mirror-Prox ($p=1$)\cite{carmon2019variance} & \multicolumn{2}{c}{$\Ord\Big( \frac{dn}{d+n} + \frac{L_{12}}{\varepsilon} \sqrt{\frac{dn}{d+n}} (\lambda \sqrt{n \ln d})\Big)$} \\
				\bottomrule 
			\end{tabular}
		}
	\end{center}
	\caption{Projection: Comparison of PO/MO and SFO calls complexities (PO-CC and SFO-CC) of various methods for $d$-dimensional $\ell_1$ norm constrained SVM with $n$ training samples. SFO uses a batchsize of $b = \ord(n)$. Our \aproj outperforms other nonsmooth methods in PO-CC/MO-CC while still maintaining $\Ord(1/\varepsilon^2)$ SFO-CC. Complexities of methods based on smooth minimax reformulation adversely scale with $n$ or $d$.}
\end{table}

Many nonsmooth convex objectives in machine learning like the hinge loss here can be written as smooth convex-concave minimax objectives of the form
\begin{align}
\min_{x \in \cX} \frac1n \sum_{i=1}^n \max_{y_i \in \cY_i} g_i(x, y_i)
\end{align}
where $g_i(x, \cdot)$ is concave and $g_i$ is $L$-smooth for all $i \in [n]$. However, the iteration/projection complexities of even the best variance reduced algorithms could have a dependence on the number $n$ of additionally introduced dual variables $\{y_i\}_{i=1}^n$~\cite{palaniappan2016stochastic,carmon2019variance}.
Therefore in the regime when comparatively $n$ is large and $\varepsilon$ is moderate ($\poly(1/\varepsilon) \ll n$), it is more efficient to optimize the original stochastic nonsmooth formulation than the smooth minimax reformulation.  

Concretely, the soft-margin SVM problem with a hinge loss, can be reformulated as a saddle point problem of the following form
\begin{align}\label{eq:svm_minimax}
\min_{\|x\|_1 \leq 1} \max_{y \in [0,1]^n}  \frac1n  (y^T\ones - y^T A x)\,.
\end{align}
This smooth saddle point problem is an $\ell_1$-$\ell_2$ matrix game (ignoring possibility of $\ell_\infty$ optimization due to limited literature) which is $L_{12}$-smooth, where
\begin{align}
L_{12} &= \max_{\|x\|_1\ \leq 1} \max_{\|y\|_2 \leq 1} \frac1n y^T A x =\frac1n \max_{i=1,\ldots,n} \|a_i\|_2 \,.
\end{align}
Note that the primal (in $\ell_1$ norm) and dual (in $\ell_2$ norm) space diameters are $D_\cX = \Ord(\lambda)$ and $D_\cY = \Ord(\sqrt{n})$ respectively. 

Next we derive the MO and SFO calls complexities (MO-CC and SFO-CC) of the variance reduced Mirror-prox method \cite{carmon2019variance}. For any stepsize $\alpha \leq \frac\varepsilon{D_\cX D_\cY \sqrt{\ln d}}$, this algorithm runs for $K = \Ord(\frac{\alpha D_\cX D_\cY \sqrt{\ln d}}{\varepsilon})$ outer iterations, each of which uses $T = 1+ \frac{L_{12}^2}{\alpha^2}$ SFO calls and one FO call, and $T+1$ primal and dual MO calls. Computational complexity of 
\begin{itemize}
\item Primal MO is $O(d \log d)$ since it involves sorting,
\item Dual MO is $O(n)$ since it involves normalization of each dual dimension,
\item FO is $\Ord(dn)$ since it involves $d \times n$-matrix vector products, and,
\item SFO is $\Ord(d+n)$ because it involves sampling from two set of $n$ and $d$ ($d$ and $n$-dimensional, respectively) vectors. 
\end{itemize}
We assume that the algorithm uses $\widetilde{T} = 1 + \frac{L_{12}^2}{\alpha^2} + \frac{dn}{d+n} = \Ord(\frac{L_{12}^2}{\alpha^2} + \frac{dn}{d+n})$ SFO calls per outer iteration, because computationally it is equivalent to $T = 1 + \frac{L_{12}^2}{\alpha^2}$ SFO calls and one FO call per outer iteration. Using the suggested stepsize $\alpha = \max(\frac\varepsilon{D_\cX D_\cY \sqrt{\ln d}}, L_{12} \sqrt{\frac{d+n}{dn}})$, we get that 
\begin{align}
[\text{MO-CC} = \Ord(K \cdot T)] = \Ord\Big( \frac{dn}{d+n} + \frac{L_{12}}{\varepsilon} \sqrt{\frac{dn}{d+n}} (\lambda \sqrt{n \ln d})\Big) =  [K \cdot \widetilde{T} = \text{SFO-CC}] \,.
\end{align}
In very high dimensional regime ($n \ll d$) or very large-scale regime ($d \ll n$), MO-CC of this smooth minimax formulation is $\Ord(d)$ or $\Ord(n)$ larger than PO-CC for \aproj. Further more the former method uses extra $\Theta(n)$ extra space for storing the dual variables.

{\bf LMO:} Next we study the case of LMO in the high-dimensional ($\poly(G_p, \sigma_p, \lambda, 1/\varepsilon) \ll d$) and large-scale ($1 \ll \poly(n)$) regime. In Table \ref{tab:svm_compare_lmo} we provide the LMO and SFO calls complexities of \almo ($p=1$, Algorithm \ref{algo:envelope_subgrad_method_proj}) and competing nonsmooth methods: \fwpsgd ($p=2$)---projection approximated with Frank-Wolfe method (Appendix~\ref{sec:fw_proj_subgrad_method}), and Randomized Frank-Wolfe method ($p=1$ or  $p=2$) \cite{lan2013complexity}. The $p$ value in brackets marks which $\ell_p$ norm the method uses. By definition $G_1 \leq G_2 \leq \sqrt{d} G_1$ and $\sigma_1 \leq \sigma_2 \leq \sqrt{d} \sigma_1$. Therefore, in this high-dimensional and large-scale regime, \almo has a more efficient dimension-free LMO-CC $\Ord(G_2^2 \lambda^2/\varepsilon^2)$ than other competing nonsmooth first-order methods, while still maintaining optimal $\Ord(1/\varepsilon^2)$ SFO-CC. Note that LMO has a computational complexity of $O(d)$ because it uses just one pass over a $d$-dimensional vector.

A competing method based on the smooth minimax reformulation is SP+VR-MP which combines ideas from Semi-Proximal~\cite{he2015semi} and Variance reduced~\cite{carmon2019variance} Mirror-Prox methods.
Here SP+VR-MP uses the variance reduced Mirror-prox method \cite{carmon2019variance} in the $\ell_2$-$\ell_2$ setting to optimize \eqref{eq:svm_minimax} and then approximates the projection steps with Frank-Wolfe (FW) method. This is an $L_{22}$-smooth minimax problem with
\begin{align}
L_{22} &= \max_{\|x\|_2\ \leq 1} \max_{\|y\|_2 \leq 1} \frac1n y^T A x =\frac1n \|A\|_2 \,.
\end{align}
where $\|A\|_2$ is the spectral norm of the matrix $A$, but the algorithm we are discussing will depend on
\begin{align}
\widetilde{L}_{22} &= \frac1n \|A\|_F \,.
\end{align}
where $\|A\|_F$ is the Frobenius norm of the matrix $A$. Note that $\frac1{\sqrt{\min(n, d)}} \|A\|_F \leq \|A\|_2 \leq \|A\|_F$. The primal and dual space diameters are again $D_\cX = \Ord(\lambda)$ and $D_\cY = \Ord(\sqrt{n})$, respectively. 

For each of the projection steps, the Frank-Wolfe method solves an $(\alpha+ 10\frac{L_{22}^2}\alpha)$-smooth convex optimization problem up to an error $\Ord(\varepsilon)$. Therefore each of these uses at most $\widehat{T} =  \lceil (\alpha+ 10\frac{L_{22}^2}\alpha) \lambda^2/\varepsilon \rceil$ LMO calls. Thus using similar arguments as the PO setting and using the suggested stepsize $\alpha = \max(\frac\varepsilon{D_\cX D_\cY}, \widetilde{L}_{22} \sqrt{\frac{d+n}{dn}})$, $K = \Ord(\frac{\alpha D_\cX D_\cY}{\varepsilon})$ outer iterations, $\widetilde{T} = 1 + \frac{\widetilde{L}_{22}^2}{\alpha^2} = \Ord(\frac{\widetilde{L}_{22}^2}{\alpha^2})$ SFO calls per outer iteration, and $\widetilde{T} = 1 + \frac{\widetilde{L}_{22}^2}{\alpha^2} + \frac{dn}{d+n} = \Ord(\frac{\widetilde{L}_{22}^2}{\alpha^2} + \frac{dn}{d+n})$ effective number of SFO calls per outer iteration, we get that 
\begin{align}
[\text{SFO-CC}  = K \cdot \widetilde{T}]  
&= \Ord\Big( \frac{dn}{d+n} + \frac{\widetilde{L}_{22}}{\varepsilon} \sqrt{\frac{dn}{d+n}} (\lambda \sqrt{n})\Big) \,,
\end{align}
and
\begin{align}
&\;\;\; [\text{LMO-CC} = \Ord(K \cdot {T}) \cdot \widehat{T} ] \nonumber \\
&= \Ord\Big( \Big[\frac{dn}{d+n} + \frac{\widetilde{L}_{22}}{\varepsilon} \sqrt{\frac{dn}{d+n}} (\lambda \sqrt{n})\Big] \cdot \Big[ 1 + (\alpha+ \frac{L_{22}^2}\alpha) \frac{ D_{\cX}^2}{\varepsilon}\Big]\Big) \nonumber \\
&= \Ord\Big( \Big[\frac{dn}{d+n} + \frac{\widetilde{L}_{22}}{\varepsilon} \sqrt{\frac{dn}{d+n}} (\lambda \sqrt{n})\Big] \cdot \Big[ 1 + \frac\lambda{\sqrt{n}} + \frac{\widetilde{L}_{22} \, \lambda^2}{\varepsilon} \sqrt{\frac{dn}{d+n}} \Big]\Big) \nonumber \\
&= \text{SFO-CC} + \Ord\Big( \frac{d\sqrt{n}\, \lambda}{d+n} + \frac{\widetilde{L}_{22} \, \lambda^2}{\varepsilon} \Big(\frac{dn}{d+n}\Big)^{\frac32} + \frac{\widetilde{L}_{22}^2 \, \lambda^3 \sqrt{n}}{\varepsilon^2} \Big(\frac{dn}{d+n}\Big) \Big) 
\end{align}
In very high dimensional regime ($n \ll d$) or very large-scale regime ($d \ll n$), LMO-CC of this smooth minimax reformulation is $\Ord(d)$ or $\Ord(n)$ larger than LMO-CC for \almo. Further more the former method uses extra $\Theta(n)$ extra space for storing the dual variables.

\begin{table}[]
	\begin{center}
		\label{tab:svm_compare_lmo}
		{\small
			\begin{tabular}{l c  c}
				\toprule
				{\bf LMO based methods (using $\ell_p$ norm)} & & \\ \midrule \midrule
				{\bf Nonsmooth methods ($p=2$)}& {{\bf LMO}: $\Ord(d)$} & {{\bf SFO}: $\Ord(d+n)$} \\	\midrule 
				\almo ($p=2$) [Theorem~\ref{thm:envelope-subgrad-method-cor2}] & $\Ord\big(\frac{G_2^2}{\varepsilon^2}\lambda^2\big)$ & $\Ord\big(\frac{G_2^2+\sigma_2^2}{\varepsilon^2}\lambda^2\big)$ \\ 
				\fwpsgd ($p=2$) [Theorem~\ref{thm:fw_proj_subgrad_method-thm}] & $\Ord\big(\frac{G_2^4+\sigma_2^4}{\varepsilon^4}\lambda^4\big)$ & $\Ord\big(\frac{G_2^2+\sigma_2^2}{\varepsilon^2}\lambda^2\big)$ \\ 
				Rand.~Frank-Wolfe ($p=2$) \cite{lan2013complexity} & $\Ord\big(d^{1/2}\frac{G_2^2}{\varepsilon^2}\lambda^2\big)$ & $\Ord\big( \frac{G_2^4+\sigma_2^4}{\varepsilon^4}\lambda^4\big)$ \\ 
				\midrule \midrule
				{\bf Nonsmooth methods ($p=1$)}& {{\bf LMO}: $\Ord(d)$} & {{\bf SFO}: $\Ord(d+n)$} \\	\midrule 
				Rand.~Frank-Wolfe ($p=1$) \cite{lan2013complexity} & $\Ord\big(d\ln(d+1) \frac{G_1^2}{\varepsilon^2}\lambda^2\big)$ & $\Ord\big(\ln^2(d+1) \frac{G_1^4+\sigma_1^4}{\varepsilon^4}\lambda^2\big)$ \\ 
				\midrule \midrule
				{\bf Minimax methods}: $\Ord(n)$ extra memory &{{\bf LMO}: $\Ord(d)$} & {{\bf SFO}: $\Ord(d+n)$} \\	\midrule 
				SP~\cite{he2015semi}+VR~\cite{carmon2019variance}-MP ($p=2$) & 
				{\bf SFO-CC} $+ \; \Ord\Big( \frac{d\sqrt{n}\, \lambda}{d+n} +$ &
				$\Ord\Big( \frac{dn}{d+n} +$ \\
				& 
				$\frac{\widetilde{L}_{22} \, \lambda^2}{\varepsilon} \Big(\frac{dn}{d+n}\Big)^{\frac32} + \frac{\widetilde{L}_{22}^2 \, \lambda^3 \sqrt{n}}{\varepsilon^2} \Big(\frac{dn}{d+n}\Big) \Big)$ &
				$+ \; \frac{\widetilde{L}_{22}}{\varepsilon} \sqrt{\frac{dn}{d+n}} (\lambda \sqrt{n})\Big)$
				\\
				\bottomrule 
			\end{tabular}
		}
	\end{center}
	\caption{Linear minimization oracle: LMO and SFO calls complexity (LMO-CC and SFO-CC) of various methods for $d$-dimensional $\ell_1$ norm constrained SVM with $n$ training samples. SFO uses a batchsize of $b = \ord(n)$. SP+VR-MP combines ideas from Semi-Proximal~\cite{he2015semi} and Variance reduced~\cite{carmon2019variance} Mirror-Prox methods. Our \almo outperforms other nonsmooth methods in LMO-CC while still maintaining $\Ord(1/\varepsilon^2)$ SFO-CC. Complexities of method based on smooth minimax reformulation adversely scale with $n$ or $d$.}
\end{table}

Similar arguments hold for the nuclear norm constrained Matrix SVM~\cite{wolf2007modeling}, so that \aproj/\almo outperforms other nonsmooth methods in some regime, where $n$ or $d$ is large and $\varepsilon$ is relatively moderate, and complexities of smooth minimax reformulation based methods scales adversely with $d$ and $n$. For this case, the gain in the actual wall-clock time would be even more stark than vector SVM due to the computation of SVD/largest eigenvalue, which is required for implementing PO/LMO.
 
 \subsection{SVM with hard constraints~\cite{nguyen2014efficient}} \label{sec:hard_SVM} 
Soft-margin SVM could be provided with some hard constraints,  so that the classifier is forced to always predict the correct labels for a subset (of size $k$) of important ``gold'' training examples. This problem can be formulated as a nonsmooth constrained optimization problem with a large number of linear constraints, as follows 
 \begin{align}
\min_{x \in \reals^{d}} \;\;\;\; &\frac1n \sum_{i = 1}^n \max(0, 1- \Ip{x}{a_{i}}) \nonumber \\
\text{subject to, }\;\;\; &1 \leq \Ip{x}{\widetilde{a}_{j}} \,, \; \forall j =1,\ldots,k \nonumber \\
 &\|x\|_1 \leq \lambda
 \end{align}
 We can solve this nonsmooth convex problem using first-order methods using projection onto hard constraints set: $\cX = \{x \,|\, \|x\|_1 \leq \lambda \text{ and } 1 \leq \Ip{x}{\widetilde{a}_{j}}\,, \forall j =1,\ldots,k\}$. This projection can be can be implemented using linear programming methods, however it is computationally costly. Therefore PO-CC efficiency is critical, and just as in the case of SVM with $\ell_1$ norm constraint (Section \ref{sec:ell1_SVM}), our \aproj method achieves smallest $\Ord(1/\varepsilon)$ dimension-free PO-CC which is better than other competing methods. \pocc and \sfocc are the same as given in Table~\ref{tab:svm_compare_po}.
 
Note that, first-order methods using one projection \cite{mahdavi2012stochastic} cannot be applied here, since they need the constraint set to be written in the functional form: $c(x) \leq 0$, such that $\rho \leq \| g \|$ for all $g \in \partial c(x)$, for some $\rho > 0$. This is not true for general set of linear constraints, where a pathological case can occur when two linear constraints have almost identical normal vectors.
\end{document}